\renewcommand\@seccntformat[1]{\csname the#1\endcsname.\quad}
\newtheorem{theorem}{Theorem}[section]
\newtheorem{thm}{Theorem}[section]
\newtheorem{coro}[theorem]{Corollary}
\newtheorem{defn}[theorem]{Definition}
\newtheorem{exam}[theorem]{Example}
\newtheorem{lem}[theorem]{Lemma}
\newtheorem{rem}[theorem]{Remark}
\newcommand{\norm}[3]{\ensuremath{\left\Vert#1\right\Vert_{#2}^{#3}}}
\DeclareMathOperator{\dist}{dist}
\begin{document}

\author[M. Ciesielski \& G. Lewicki]{Maciej Ciesielski$^*$ and Grzegorz Lewicki}

\thanks{$^*$This research is supported by the grant 04/43/DSPB/0086 from Polish Ministry of Science and Higher Education.}

\title{Best approximation properties in spaces of measurable functions}

\begin{abstract}
We research proximinality of $\mu$-sequentially compact sets and $\mu$-compact sets in measurable function spaces. Next we show a correspondence between the Kadec-Klee property for convergence in measure and $\mu$-compactness of the sets in Banach function spaces. Also the property $S$ is investigated in Fr\'echet spaces and employed to provide the Kadec-Klee property for local convergence in measure. We discuss complete criteria for continuity of metric projection in Fr\'echet spaces with respect to the Hausdorff distance. Finally, we present the necessary and sufficient condition for continuous metric selection onto a one-dimensional subspace in sequence Lorentz spaces $d(w,1)$.       
\end{abstract}

\maketitle

\bigskip\ 

{\small \underline{2000 Mathematics Subjects Classification: 41A65, 46E30, 46A40 }\hspace{1.5cm}\
\ \ \quad\ \quad . }\smallskip\ 

{\small \underline{Key Words and Phrases:}\hspace{0.15in} Fr\'echet spaces, Banach function spaces, Lorentz spaces, Kadec-Klee property, metric selection, extreme points, $\mu$-compactness.}

\bigskip\ \ 

\section{Introduction}
The geometrical structure of Banach spaces has been investigated  tremendously and applied to approximation theory by many authors \cite{CheHeHudz,CiesKamPluc,HKL,Hu-Ku,kurc}. Since the deep motivation of study of the geometry of Banach spaces has been developed, during the decades selected global properties corresponding to a metric have been evaluated \cite{Cal,CKKP,hk1,Kam_extrem,Kam,KMGam}. Namely, the monotonicity and rotundity properties of Banach spaces (for example rotundity, uniform rotundity, strict monotonicity and uniform monotonicity) are crucial key in investigation of existence and uniqueness at the best approximation problems \cite{CheHeHudz,Hu-Ku,kurc}. If the global structure of Banach space disappoints, then the natural question appears of researching the applicable local structure of Banach space in approximation problems. Namely, local approach of rotundity, monotonicity and Kadec Klee property for global and local convergence in measure, resp. (for example extreme points, points of lower and upper monotonicity, $H_g$ and $H_l$ points) with application to the best approximation problems has been evolved recently by \cite{CiesKamPluc,CieKolPan,CieKolPlu}. In view of the previous results, a natural expectation that the metric structure of Fr\'echet spaces plays an analogous rule as Banach structure in approximation theory is researched in this paper.   
The next point of our interest in this paper is an existence of continuous metric selection onto a one-dimensional subspace of the sequence Lorentz spaces. It is worth to noticing that the complete condition under which there exists a continuous metric selection in $C_0(T)$ and $L^1$ was researched in \cite{Deutsch}. In the spirit of these results we investigate criteria for continuity of the metric selection onto a one-dimensional subspace of the Lorentz spaces $d(w,1)$. For more information concerning various concepts of continuity of the metric projection operator the reader is referred to \cite{Brown1} - \cite{Brown5} and \cite{Wulbert}. The last point of our consideration in this paper is devoted to the property $S$ that was established unexplicitly for the first time in \cite{KPS} and applied to provide the Kadec-Klee property for convergence in measure in Lorentz spaces $\Lambda_\phi$.
 
In section 2 we recall the necessary terminology.

The section 3 is devoted to investigation of  proximinality of $\mu$-sequentially compact sets and $\mu$-compact sets in measurable function spaces. We present examples and properties of $\mu$-sequentially compact and $\mu$-compact sets. We positively answer the essential question under which criteria in a Fr\'echet space $X$, equipped with a $F$-norm, $\mu$-sequentially compactness yields proximinality of closed subset of $X$. We also discuss some special proximinal sets and Chebyshev sets in Fr\'echet spaces. Finally, in this section we apply Kedec-Klee property for global convergence in measure to establish proximinality of $\mu$-compact sets in Banach function spaces. We also present a class of Lorentz spaces $\Lambda_\phi$ and $\Gamma_{p,w}$ which possess a property that any nonempty $\mu$-compact subset in these spaces is proximinal. 

In the next section 4 we characterize a continuity of metric projection operator and property $(S)$ in Banach function spaces and Fr\'echet spaces. Namely, we show examples of Lorentz spaces $\Lambda_\phi$ and $\Gamma_{p,w}$ and also some specific Fr\'echet spaces which satisfy property $S$. We also prove that the direct sum of Fr\'echet spaces equipped with $F$-norm satisfying property $S$ is a Fr\'echet space equipped with $F$-norm and has Kadec-Klee property for local convergence in measure.
Finally, we establish the necessary and sufficient criteria for continuity of the metric projection with respect to the Hausdorff distance in Fr\'echet function space with the Kadec-Klee property for local convergence in measure.  It is worth noticing that in our considerations we do not restrict ourselves to the case of Banach spaces but we also consider Fr\'echet spaces not necessarily locally convex. Observe
that in abstract approximation theory there are very few papers concerning Fr\'echet spaces. Mainly the case of Banach spaces is considered (see for example taken in a random way volume of Journal of Approximation Theory or Constructive Approximation).

The last section 5 is devoted to the characterization of the metric selection for the metric projection onto a one-dimensional subspace in sequence Lorentz spaces $d_{(w,1)},$ which surprisingly is a highly non-trivial problem (compare with \cite{Deutsch}, Th. 6.3 and Cor. 6.6). We present full criteria for continuity of the metric selection onto a one-dimensional subspace in sequence Lorentz space $d_{(w,1)}$.  We also discuss some examples of the metric projection onto a one-dimensional subspace in sequence Lorentz spaces which does not admit a continuous metric selection and also admits a continuous metric selection. 

\section{Preliminaries}
Let $\mathbb{R}$, $\mathbb{R}^+$ and $\mathbb{N}$ be the sets of reals, nonnegative reals  and positive integers, respectively. Denote as usual by $S_X$ (resp. 
$B_X$) the unit sphere (resp. the closed unit ball) of a Banach
space $(X,\left\Vert \cdot \right\Vert _{X}).$  A point $x\in{B_X}$ is called an {\it extreme point} of $B_X$ if for any $y,z\in{B_X}$ such that $y+z=2x$ we have $y=z$. A Banach space $X$ is said to be {\it strictly convex} if any element $x\in{S_X}$ is an extreme point of $B_X$. Define by $(T,\Sigma,\mu)$ a measure space and by $L^{0}(T)$ the set of all (equivalence classes of) extended real valued $\mu$ measurable functions on $T$. For simplicity we use the short notation $L^0=L^0([0,\alpha))$ with the Lebesgue measure $m$ on $[0,\alpha)$, where $\alpha =1$ or $\alpha =\infty$. A Banach lattice $(E,\Vert \cdot \Vert _{E})$ is called a \textit{Banach function space} (or a \textit{K\"othe space}) if it is a sublattice of $L^{0}$ satisfying the following conditions
\begin{itemize}
\item[(1)] If $x\in L^0$, $y\in E$ and $|x|\leq|y|$ m-a.e., then $x\in E$ and $%
\|x\|_E\leq\|y\|_E$.
\item[(2)] There exists a strictly positive $x\in E$.
\end{itemize}
By $E^{+}$ we denote the positive cone of $E$, i.e. $E^{+}={\{x \in E:x \ge 0,m\textnormal{-a.e.}\}}$. We denote $A^{c}=[0,\alpha)\backslash A$ for any measurable set $A$. A space $E$ has the \textit{Fatou property} if for any $\left( x_{n}\right)\subset{}E^+$, $\sup_{n\in \mathbb{N}}\Vert x_{n}\Vert
_{E}<\infty$ and $x_{n}\uparrow x\in L^{0}$, then $x\in E$ and $\Vert x_{n}\Vert _{E}\uparrow\Vert x\Vert_{E}$. 
A space $E$ has the semi-\textit{Fatou property} if conditions $0\leqslant
x_{n}\uparrow x\in E$ with $x_{n}\in E$ imply $\Vert x_{n}\Vert _{E}\uparrow
\Vert x\Vert _{E}$. For any function $x\in L^{0}$ we define its \textit{distribution function} by 
\begin{equation*}
d_{x}(\lambda) =m\left\{ s\in [ 0,\alpha) :\left\vert x\left(s\right) \right\vert >\lambda \right\},\qquad\lambda \geq 0,
\end{equation*}
and its \textit{decreasing rearrangement} by 
\begin{equation*}
x^{\ast }\left( t\right) =\inf \left\{ \lambda >0:d_{x}\left( \lambda
\right) \leq t\right\}, \text{ \ \ } t\geq 0.
\end{equation*}
Given $x\in L^{0}$ we denote the \textit{maximal function} of $x^{\ast }$ by 
\begin{equation*}
x^{\ast \ast }(t)=\frac{1}{t}\int_{0}^{t}x^{\ast }(s)ds.
\end{equation*}%
Two functions $x,y\in{L^0}$ are said to be \textit{equimeasurable} (shortly $x\sim y$) if $d_x=d_y$.
A Banach function space $(E,\Vert \cdot \Vert_{E}) $ is called \textit{rearrangement invariant} (r.i. for short) or \textit{symmetric} if whenever $%
x\in L^{0}$ and $y\in E$ with $x \sim y,$ then $x\in E$ and $\Vert x\Vert
_{E}=\Vert y\Vert _{E}$. For more properties of $d_{x}$, $x^{\ast }$ and $x^{\ast \ast }$ see \cite{BS, KPS}. 
Let $0<p<\infty $ and $w\in L^{0}$ be a nonnegative weight function, the
Lorentz space $\Gamma _{p,w}$ is a subspace of $L^{0}$ such that 
\begin{equation*}
\Vert x\Vert _{\Gamma _{p,w}}:=\left( \int_{0}^{\alpha }x^{\ast \ast
	p}(t)w(t)dt\right) ^{1/p}<\infty .
\end{equation*}
Additionally, we assume that $w$ is from
class $D_{p}$, i.e. 
\begin{equation*}
W(s):=\int_{0}^{s}w(t)dt<\infty \mathnormal{\ \ \ }\text{\textnormal{and}}%
\mathnormal{\ \ }W_{p}(s):=s^{p}\int_{s}^{\alpha }t^{-p}w(t)dt<\infty
\end{equation*}%
for all $0<s\leq 1 $ if $\alpha =1 $ and for all $0<s<\infty $
otherwise. These two conditions guarantees that Lorentz space $\Gamma _{p,w}$ is nontrivial. It is well known that $\left( \Gamma _{p,w},\Vert \cdot \Vert
_{\Gamma _{p,w}}\right) $ is a r.i. quasi-Banach function space with the Fatou
property. It was proved in \cite{KMGam} that in the case when $\alpha =\infty $
the space $\Gamma _{p,w}$ has order continuous norm if and only if $%
\int_{0}^{\infty }w\left( t\right) dt=\infty .$ The spaces $\Gamma _{p,w}$ were introduced by A.P. Calder\'{o}n in \cite{Cal} in a similar way as the classical Lorentz spaces $\Lambda_{p,w}$ that is a subspace of $L^0$ with
\begin{equation*}
\left\Vert x\right\Vert _{\Lambda_{p,w}}=\left( \int_{0}^{\alpha}(x^{\ast }(t))^{p}w(t)dt\right) ^{1/p}<\infty,
\end{equation*}%
where $p\geq 1$ and the weight function $w$ is nonnegative and nonincreasing (see \cite{Loren}). The space $\Gamma _{p,w}$ is an interpolation space between $L^{1}$ and $L^{\infty }$ yielded by the Lions-Peetre $K$-method \cite{BS,KPS}. Clearly, $\Gamma _{p,w}\subset \Lambda _{p,w}.$ The opposite
inclusion $\Lambda _{p,w}\subset \Gamma _{p,w}$ is satisfied if and only if $w\in B_{p}$ (see \cite{KMGam}). It is worth mentioning that the spaces $\Gamma _{p,w}$ and $\Lambda _{p,w}$ are
also connected by Sawyer's result (Theorem 1 in \cite{Sawy}; see also \cite{Step}), which states that the K\"{o}the dual of $\Lambda _{p,w}$, for $1<p<\infty $ and $\int_{0}^{\infty }w(t)dt=\infty $, coincides with the
space $\Gamma _{p^{\prime },\widetilde{w}}$, where $1/p+1/p^{\prime }=1$ and 
$\widetilde{w}(t)=\left( t/\int_{0}^{t}w(s)ds\right) ^{p^{\prime }}w(t)$. 
	
Let $(X,\tau)$ be a topological vector space, where $X$ is a vector space and $\tau$	is topology. A topological vector space $(X,\tau)$ is said to be a {\it Fr\'echet space} if its topology $\tau$ is induced by a translation invariant metric $d$, i.e. $d(x+z,y+z)=d(x,y)$ for any $x,y,z\in{X}$, and also $(X,d)$ is complete. Unless we say otherwise we consider a Fr\'echet space $X$ with a topology $\tau$ induced by a $F$-norm, i.e. a mapping $\norm{\cdot}{}{}:X\rightarrow\mathbb{R}^+$ satisfying the triangle inequality and the following conditions; $(i)$ $\norm{x}{}{}=0\Leftrightarrow{x=0}$, and $(ii)$ $\norm{x}{}{}=\norm{-x}{}{}$.

Let  $ X \subset L^0(T)$ be a Fr\'echet function space equipped with an F-norm $ \| \cdot \|$. A point $x\in{X}$ is said to be an $H_g$ \textit{point} in $X$ if for any $(x_n)\subset{X}$ such that $x_n\rightarrow{x}$ globally in measure and $\norm{x_n}{}{}\rightarrow\norm{x}{}{}$, we have $\left\Vert x_n-x\right\Vert\rightarrow{0}$. A point $x\in{X}$ is said to be an $H_l$ \textit{point} in $X$ if for any $(x_n)\subset{X}$ such that $x_n\rightarrow{x}$ locally in measure and $\norm{x_n}{}{}\rightarrow\norm{x}{}{}$, we have $\left\Vert x_n-x\right\Vert\rightarrow{0}$. We say that the space $X$ has the \textit{Kadec-Klee property for global convergence in measure} (\textit{Kadec-Klee property for local convergence in measure}) if each $x\in{X}$ is an $H_g$ point (an $H_l$ point) in $X$.

Let $ X $ be a Fr\'echet space with a $F$-norm $\norm{\cdot}{}{}$ and let $ Y \subset X$ be a nonempty subset. For $x \in X$ define 
$$
P_Y(x) = \{ y \in Y: \| x - y\| = dist(x,Y)\}.
$$ 
Any $y \in P_Y(x)$ is called a best approximant in $Y$ to $x$ and the mapping $ x \rightarrow P_Y(x)$ is called {\it the metric projection.} A nonempty set $ Y \subset X$ is called {\it proximinal} if $ P_Y(x) \neq \emptyset$ for any $ x \in X.$ A nonempty set $Y$ is said to be a {\it Chebyshev set} if it is proximinal and $ P_Y(x)$ is a singleton for any $ x \in X.$ A continuous mapping $S : X \rightarrow Y$ is called a {\it continuous metric selection} if $Sx \in P_Y(x)$ for any $ x \in X.$

\section{Proximinality in spaces of measurable functions}
We start with the necessary notion. 
\begin{defn}
\label{mucompact}
Let $(T, \Sigma, \mu)$ be a measure space and let for $ t \in T$ 
$$ 
Z_t = \mathbb{R} \cup \{ - \infty, \infty \}.
$$
Assume that $ Z = \Pi_{t\in T} Z_t$ is equipped with the Tychonoff topology, which will be denoted by $ \tau.$
A set $ C \subset Z$ is called $\mu$-sequentially compact if for any sequence $ \{ c_n\} \subset C$ there exists a subsesquence $ \{ c_{n_k}\}$ and $ c \in Z$ such that 
$c_{n_k}(t) \rightarrow c(t)$ $ \mu$-a.e.. 
\end{defn}
\begin{rem}
\label{counting}
If $ T = \mathbb{N}, \Sigma = 2^{\mathbb{N}}$ and $ \mu$ is the counting measure, then by diagonal argument any set $C \subset Z$ is $ \mu$-sequentially compact.
\end{rem}
Now we present some examples of $ \mu$-sequentially compact sets. First we investigate $\mu$-sequentially compactness of a set of all increasing functions on $\mathbb{R}$.
\begin{thm}
\label{increasing}
Let $ T = \mathbb{R},$ $ \mu$ be a nonatomic measure on $T$ and let $\Sigma$ denote the $\sigma$-algebra of Borel subsets of $T.$ Let 
$$
C = \{ f: \mathbb{R} \rightarrow \mathbb{R}: f \hbox{ is increasing} \}.
$$ 
Then $C$ is $\mu$-sequentially compact.
\end{thm}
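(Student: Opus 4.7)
The plan is to mimic the classical Helly selection theorem. Enumerate $\mathbb{Q} = \{q_k\}_{k \in \mathbb{N}}$. Since $[-\infty, \infty]$ is sequentially compact, a standard diagonal argument yields a subsequence $\{c_{n_j}\}$ of $\{c_n\}$ such that the pointwise limit
\[
g(q) := \lim_{j \to \infty} c_{n_j}(q) \in [-\infty, \infty]
\]
exists for every $q \in \mathbb{Q}$. Monotonicity of each $c_n$ passes to $g$, so $g: \mathbb{Q} \to [-\infty, \infty]$ is increasing.

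Next, I would extend $g$ to all of $\mathbb{R}$ by setting
\[
c^-(t) = \sup\{ g(q) : q \in \mathbb{Q},\ q < t \}, \qquad c^+(t) = \inf\{ g(q) : q \in \mathbb{Q},\ q > t \}.
\]
Both $c^\pm$ are increasing and satisfy $c^- \leq c^+$ on $\mathbb{R}$. Let $D = \{ t \in \mathbb{R} : c^-(t) < c^+(t) \}$. Monotonicity of $g$ implies that for $t_1 < t_2$ in $D$ one has $c^+(t_1) \leq c^-(t_2)$, so the open intervals $(c^-(t), c^+(t))$ with $t \in D$ are pairwise disjoint; since each such interval contains a rational, $D$ is at most countable. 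Define $c \in Z$ by $c(t) = c^-(t)$ on $\mathbb{R} \setminus D$ and arbitrarily on $D$.

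To verify $\mu$-a.e.\ convergence, fix $t \notin D$ and any rationals $q^- < t < q^+$. Monotonicity of $c_{n_j}$ gives $c_{n_j}(q^-) \leq c_{n_j}(t) \leq c_{n_j}(q^+)$; letting $j \to \infty$ produces
\[
g(q^-) \leq \liminf_j c_{n_j}(t) \leq \limsup_j c_{n_j}(t) \leq g(q^+).
\]
Taking $q^- \nearrow t$ and $q^+ \searrow t$ through $\mathbb{Q}$ forces both bounds to equal $c^-(t) = c^+(t) = c(t)$, hence $c_{n_j}(t) \to c(t)$ in $[-\infty, \infty]$. Because $\mu$ is nonatomic, every countable subset of $\mathbb{R}$ has $\mu$-measure zero, so $\mu(D) = 0$ and the convergence indeed holds $\mu$-a.e.

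The main obstacle I anticipate is the bookkeeping at points where $c(t) = \pm\infty$: the sandwich inequalities must be interpreted in $[-\infty, \infty]$, and one has to check, for instance, that when $c(t) = \infty$ the lower bounds $g(q^-)$ can be made arbitrarily large as $q^- \nearrow t$ (so that $c_{n_j}(t) \to \infty$), while the upper bound $g(q^+) = \infty$ is vacuous; the dual case is handled symmetrically. Apart from this case analysis the argument is routine.
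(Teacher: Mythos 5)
Your proof is correct. It is the classical Helly selection argument, and while it rests on the same three pillars as the paper's proof (restriction to a countable dense set, monotonicity, and countability of the exceptional set, which is $\mu$-null because a nonatomic Borel measure annihilates singletons), the mechanics are genuinely different. The paper first fixes a cluster point $c$ of $\{c_n\}$ in the Tychonoff topology on $Z$, argues that $c$ is increasing and hence has at most countably many discontinuities, and only then extracts a subsequence converging to $c$ on a countable dense subset of the continuity points; it must also run a separate case analysis when $c$ takes the values $\pm\infty$, introducing the thresholds $s_1$ and $u_1$. You instead build the subsequence and the limit simultaneously: diagonalize over $\mathbb{Q}$, define the limit via the one-sided envelopes $c^{\pm}$, and identify the exceptional set $D$ as the set where the envelopes disagree (countable because the gaps $(c^-(t),c^+(t))$ are pairwise disjoint and each contains a rational). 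This buys two things: you never need to justify that a Tychonoff cluster point of increasing functions is increasing or to re-select a subsequence tracking a pre-chosen limit, and the extended-real sandwich absorbs the $c(t)=\pm\infty$ cases uniformly rather than as a separate branch --- the obstacle you flag at the end is handled exactly as you describe, since $c^-(t)=+\infty$ means the lower bounds $g(q^-)$ are unbounded as $q^-\nearrow t$. What the paper's route buys in exchange is that it works entirely inside the compact space $Z$ of Definition 3.1, which is the template it then reuses for the convex case and for Theorem 3.5; your version is the cleaner and more self-contained argument for this particular statement.
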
 
\begin{proof}
Fix a sequnece $ \{ c_n \} \subset C.$ Let $ A_n = cl^{\tau}\{c_k: k \geq n\}.$ Since $ Z$ is $\tau$-compact,
$$
A = \bigcap_{n =1}^{\infty} A_n \neq \emptyset.
$$
Fix $ c \in A.$ First assume that $ |c(t)| < + \infty $ for any $ t \in \mathbb{R}.$ Since $ c_n$ are increasing it is easy to see that $c$ is also increasing.
Since $ c$ is increasing, it has at most countable number points of discontinuity. Let us denote this set by $ D_o.$ Since $ \mu$ is a nonatomic, Borel maesure,$\mu(D_o) = 0.$ Now fix a countable set  $ E= \{t_1,...\} \subset F= \mathbb{R} \setminus D_o$ dense in $F.$ Now we find a subsequence
$ \{n_k\} $ such that $ c_{n_k}(t_i) \rightarrow c(t_i)$ for any $i \in \mathbb{N}.$ To do that, set 
$$ 
U_k =\{ g \in A_k: |g(t_i) - c(t_i)| < \frac{1}{k} \hbox{ for } i=1,...,k \}.
$$  
By definition of $c,$ $\tau$ and $ A $ we can select a stricly increasing sequence $ \{ n_k \}$ such that $ c_{n_k} \in \{c_l: l > n_{k-1}\}$ for any $ k \in \mathbb{N}.$ Clearly, $ c_{n_k}(t_i) \rightarrow c(t_i)$ for any $i\in \mathbb{N}.$
\newline
Now we show that $c_{n_k}(t) \rightarrow c(t) \in \mathbb{R}$ for any $ t \in F.$  Assume to the contrary that there exists $ t \in F$ such that $c_{n_k}(t)$ does not converge to $c(t).$ Hence without loss of generality, passing to a subsequence if necessary, we can assume that there exists $ d>0$ such that $ |c_{n_k}(t) - c(t)| >d$ for any $ k \in \mathbb{N}.$ First assume that for infinite number of $k$
$c_{n_k}(t) - c(t) >d.$ Then for any $ s> t$
$$
c_{n_k}(s) - c(t) \geq c_{n_k}(t) - c(t) > d.
$$ 
Since $ c$ is continuous at $t$ select $ t_i \in E$ such that $ t < t_i$ and $ c(t_i) - c(t) < \frac{1}{3d}.$ 
Since for $ k \geq k_o $ $ |c_{n_k}(t_i) -c(t_i)| < \frac{1}{3d},$ we have for $ k \geq k_o,$
$$
0< d <c_{n_k}(t) - c(t) \leq c_{n_k}(t_i) - c(t) 
$$
$$
\leq |c_{n_k}(t_i) - c(t_i)| + |c(t_i) - c(t)| < \frac{2}{3d} < d,
$$
which is a contradiction.
\newline
If for infinite number of $k$ $c(t)-c_{n_k}(t)  >d,$ reasoning in the same way, we get a contradiction.
\newline
Now assume that there exists $ s_o \in F$ such that $ |c(s_o)| = +\infty.$ Without loss of generality we can assume that $ c(s_o) = +\infty.$ Let 
$$
s_1 = \inf \{ t \in \mathbb{R}: c(t) = +\infty \}
$$
$(s_1 = -\infty $ if for any $s\in\mathbb{R}$ $c(s)= +\infty ).$ Since $ c_{n_k}$ is increasing for any $k\in\mathbb{N}$, $c(s) = + \infty$ for any $ s > s_1.$ If $ s_1 \in \mathbb{R}, $ and $c(u) = -\infty $ for some $ u \leq s_1,$ then put  
$$
u_1 = \sup \{ t \in \mathbb{R}: c(t) = -\infty \}.
$$
If $ u_1 = s_1, $ then $c_{n_k}(t) \rightarrow +\infty $ for any $ t > s_1$ and $c_{n_k}(t) \rightarrow -\infty $ for any $ t < s_1.$ If $ u_1 < s_1$ then for any $ t \in (u_1,s_1) $ $ |c(t)| < +\infty.$ Reasoning as in the first part of the proof we get our claim. 
\end{proof}
\begin{thm}
\label{convex}
Let $ T = \mathbb{R},$ $ \mu$ be a nonatomic measure on $T$ and let $ \Sigma$ denote the $\sigma$-algebra of Borel subsets of $T.$ Let 
$$
C = \{ f: \mathbb{R} \rightarrow \mathbb{R}: f \hbox{ is convex} \}.
$$ 
Then $C$ is $\mu$-sequentially compact.
\end{thm}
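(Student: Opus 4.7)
The plan is to parallel the proof of Theorem 3.3, substituting the slope inequality of convex functions for the monotonicity inequality. By Tychonoff compactness of $Z$, choose a cluster point $c \in A = \bigcap_{n=1}^{\infty} \mathrm{cl}^{\tau}\{c_k : k \geq n\}$; the task is to extract a subsequence $(c_{n_k})$ with $c_{n_k} \to c$ $\mu$-a.e.

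First handle the case $|c(t)| < \infty$ for every $t \in \mathbb{R}$. Since the pointwise cluster limit of convex functions still satisfies the convexity inequality (pass to the limit on a subsequence realising the cluster values at three collinear points), $c$ is convex on $\mathbb{R}$ and hence continuous. Following the construction of the previous theorem, take a countable dense $E = \{t_1, t_2, \ldots\} \subset \mathbb{R}$, define $U_k = \{g \in A_k : |g(t_i) - c(t_i)| < 1/k \text{ for } i = 1, \ldots, k\}$, and pick $c_{n_k} \in U_k$ with $(n_k)$ strictly increasing; then $c_{n_k}(t_i) \to c(t_i)$ for each $i$. To extend convergence to arbitrary $t \in \mathbb{R}$, use the slope monotonicity inequality: for $t_{i_1} < t_{i_2} < t < t_{i_3} < t_{i_4}$ from $E$,
$$\frac{c_{n_k}(t_{i_2}) - c_{n_k}(t_{i_1})}{t_{i_2} - t_{i_1}} \leq \frac{c_{n_k}(t) - c_{n_k}(t_{i_2})}{t - t_{i_2}} \leq \frac{c_{n_k}(t_{i_3}) - c_{n_k}(t)}{t_{i_3} - t} \leq \frac{c_{n_k}(t_{i_4}) - c_{n_k}(t_{i_3})}{t_{i_4} - t_{i_3}}.$$
The outer slopes converge to finite limits, so $(c_{n_k}(t))$ is uniformly bounded and any cluster value $c^{\ast}$ is trapped between $c(t_{i_2}) + (t - t_{i_2})\sigma_{\min}$ and $c(t_{i_2}) + (t - t_{i_2})\sigma_{\max}$; shrinking $t_{i_2}, t_{i_3} \to t$ through $E$ and invoking continuity of $c$ forces $c^{\ast} = c(t)$, whence $c_{n_k}(t) \to c(t)$ for every $t$.

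When $c$ takes values in $\{\pm \infty\}$, set $F = \{t : c(t) \in \mathbb{R}\}$ and $F_\pm = \{t : c(t) = \pm \infty\}$. Two structural observations follow from convexity: if $F$ contains four points $t_0 < t_1 < t_2 < t_3$, the slope sandwich with these four endpoints yields uniform bounds on $c_n$ over $[t_1, t_2]$, so $F \supset [t_1, t_2]$; hence $F$ (once it has at least four points) is an interval. Moreover, if $t_0 \in F_+$ lies strictly between two points $a, b \in F$, then $c(t_0) \leq \lambda c(a) + (1-\lambda) c(b) < \infty$, a contradiction; so $F_+$ is confined to the outer rays complementary to $F$. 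The set $F_-$ may instead sit in the interior, as witnessed by $c_n(x) = n(x^2 - 1)$, where $F = \{\pm 1\}$, $F_- = (-1, 1)$, $F_+ = \mathbb{R} \setminus [-1, 1]$. Enrich the extraction $U_k$ to enforce $c_{n_k}(t_i) > k$ for $t_i \in E \cap F_+$ and $c_{n_k}(t_i) < -k$ for $t_i \in E \cap F_-$, which is legitimate since $c$ is a Tychonoff cluster point and $(k, +\infty]$, $[-\infty, -k)$ are the basic neighborhoods of $\pm \infty$ in $Z_{t_i}$. Then on $\mathrm{int}(F)$ the finite-case argument still applies; for $t \in \mathrm{int}(F_-)$, pick $t_i, t_j \in E \cap F_-$ bracketing $t$ and deduce $c_{n_k}(t) \leq \lambda c_{n_k}(t_i) + (1-\lambda) c_{n_k}(t_j) \to -\infty$; for $t \in \mathrm{int}(F_+)$ on, say, the right ray, pick $t_\ell \in E \cap F$ and $t_i \in E \cap F_+$ with $t_\ell < t_i < t$, and slope monotonicity gives
$$\frac{c_{n_k}(t) - c_{n_k}(t_i)}{t - t_i} \geq \frac{c_{n_k}(t_i) - c_{n_k}(t_\ell)}{t_i - t_\ell} \longrightarrow +\infty,$$
so $c_{n_k}(t) \to +\infty$. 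The residual boundary set $\partial F \cup \partial F_+ \cup \partial F_-$ is finite and thus $\mu$-null by nonatomicity of $\mu$.

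The delicate point I expect to grapple with is the case $|F| \leq 3$, where $F$ need not be an interval and the slope sandwich must be adapted to the precise configuration of finite and infinite values—especially in ensuring that the structural restrictions on $F_\pm$ do force $\mu$-a.e.\ convergence on the full line, and in verifying that the enriched extraction for $E \cap F_\pm$ does not clash with the finite-value clauses used on $E \cap F$.
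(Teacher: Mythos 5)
Your finite-valued case is sound (and in fact yields convergence at \emph{every} point, since a finite convex cluster function is continuous), and your structural observations about $F_\pm$ are correct. But the infinite-valued case contains a genuine gap, and it is precisely the one you flag at the end: when $F\cup F_-$ has empty interior, no adaptation of the slope sandwich can work, because for convex functions divergence to $+\infty$ on a countable dense set does \emph{not} propagate to $\mu$-a.e.\ divergence. Concretely, let $K\subset[0,1]$ be a fat Cantor set with $K\cap E=\emptyset$, let $I_k=[m_k-r_k,m_k+r_k]$ run through finer and finer finite covers of $K$ by intervals meeting $K$, with $r_k\to0$ and each cover repeated infinitely often, and put $c_k(x)=r_k^{-1}|x-m_k|$. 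Every $u\in E$ has $\dist(u,K)>0$, hence lies in only finitely many $I_k$, so $c_k(u)\to+\infty$ and $c\equiv+\infty$ is a legitimate cluster point, with $F=F_-=\emptyset$ and $F_+=\mathbb{R}$. Your enriched selection rule only forces $c_{n_k}(t_i)>k$ for $t_i\in E$, a condition met by all sufficiently late indices; it therefore does not exclude subsequences along which every $t\in K$ still lies in infinitely many $I_{n_k}$, and for such a subsequence $c_{n_k}(t)$ has $\liminf\le1$ and $\limsup=+\infty$ at every point of a set of positive measure. Your argument for $\mathrm{int}(F_+)$ cannot be repaired here because it needs a reference point $t_\ell$ in $E\cap F$ (or $E\cap F_-$) where the values stay bounded above, and none exists.

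The theorem survives for such sequences only because one may pass to a \emph{further} subsequence along which the minimizers $m_{n_k}$ converge; then the functions are eventually monotone on each side of the limit point and one is reduced to Theorem \ref{increasing}. This is exactly the paper's proof: it never propagates from a cluster point, but selects intervals $[a_n,b_n]$ on which $c_n$ attains its minimum, extracts $a_n\to a$, and applies Theorem \ref{increasing} on $(a+\frac1l,+\infty)$ and $(-\infty,a-\frac1l)$ together with a diagonal argument. Your cluster-point/slope-sandwich route does work, and gives more information, whenever $F$ contains at least three points (so that $F$ is essentially an interval and reference points are available), but in the degenerate configurations it must be supplemented by the minimizer extraction; as written, the proposal does not close that case.
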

\begin{proof}
Fix a sequence $ \{c_n \} \subset C.$ If there exists a subsequence $ \{n_k\} $  such that $ c_{n_k}$ is increasing for any $k\in\mathbb{N}$ or $c_{n_k}$ is decreasing for any $k\in\mathbb{N}$ then applying Theorem \ref{increasing} we get our claim. In the opposite case, select for any $ n\in\mathbb{N}$, $ a_n \leq b_n$ such that $ c_n$ attains its global minimum at $ t \in [a_n,b_n].$ Let 
$$
a = \inf \{ a_n : n \in \mathbb{N}\}.
$$
Without loss of generality, passing to a convergent subsequence, if necessary we can assume that $ a_n \rightarrow a.$ 
If $ a = -\infty ,$ then for any $ l \in \mathbb{N}$ there exists $ n_o(l) \geq l$ such that $ c_n|_{(-l, +\infty)}$ are increasing for $n \geq n_o(l).$  By Theorem \ref{increasing} and the diagonal argument we can select a subsequence 
$\{n_k\} $ such that $c_{n_k}(t) \rightarrow c(t)$ $ \mu$-a.e.. If $ a > -\infty$ then for any $ l \in \mathbb{N}$ there exists $ n_o(l) \geq l$ such that $ c_n|_{(a+\frac{1}{l}, +\infty)}$ are increasing for $n \geq n_o(l)$ and $ c_n|_{(-\infty, a-\frac{1}{l})}$ are decreasing. Reasoning as in the proof of Theorem \ref{increasing} and applying the diagonal argument we can select a subsequence 
$\{n_k\} $ such that $c_{n_k}(t) \rightarrow c(t)$ $ \mu$-a.e.. The proof is complete.
\end{proof}
Now we prove important for further applications 
\begin{thm}
\label{partition}
Let $ T = \mathbb{R}$ and let $ \mu$ a nonatomic measure on $T$ and let $ \Sigma$ the $\sigma$-algebra of Borel subsets of $T.$ Fix $ \{ t_n\}_{n \in \mathbb{Z}},$ $ t_n < t_{n+1},$ a partition of $ T.$
Assume that $ C \subset Z$ is such that for any $i \in \mathbb{N}$ 
$$ 
C|_{(t_i, t_{i+1})}= \{ c|_{(t_i, t_{i+1})}:c \in C\} 
$$ is $\mu$-sequentially compact. Then $C$ is $\mu$-sequentially compact.
\end{thm}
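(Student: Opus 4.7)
The plan is to use the standard diagonal argument, exploiting that the partition $\{t_n\}_{n \in \mathbb{Z}}$ yields only countably many intervals and that the endpoints $\{t_n\}$ form a $\mu$-null set since $\mu$ is nonatomic (every singleton then has measure zero).

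First I would enumerate $\mathbb{Z}$ as a sequence $\{i_1, i_2, \dots\}$ so that I can iterate through the intervals. Starting from an arbitrary sequence $\{c_n\} \subset C$, I pass to the restrictions $\{c_n|_{(t_{i_1}, t_{i_1+1})}\}$, which by hypothesis lie in a $\mu$-sequentially compact family, so I extract a subsequence $\{c_n^{(1)}\}$ of $\{c_n\}$ together with a function $c^{(1)} \in \prod_{t \in (t_{i_1}, t_{i_1+1})} Z_t$ with $c_n^{(1)}(t) \to c^{(1)}(t)$ for $\mu$-a.e. $t \in (t_{i_1}, t_{i_1+1})$. Inductively, given $\{c_n^{(k)}\}$, I apply $\mu$-sequential compactness of $C|_{(t_{i_{k+1}}, t_{i_{k+1}+1})}$ to the restrictions of $\{c_n^{(k)}\}$ there and obtain a further subsequence $\{c_n^{(k+1)}\}$ converging $\mu$-a.e. on $(t_{i_{k+1}}, t_{i_{k+1}+1})$ to some $c^{(k+1)}$.

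Next I form the diagonal subsequence $c_{n_k} := c_k^{(k)}$. By construction, for every fixed $j \in \mathbb{N}$ the tail $\{c_{n_k}\}_{k \geq j}$ is a subsequence of $\{c_n^{(j)}\}$, so $c_{n_k}(t) \to c^{(j)}(t)$ for $\mu$-a.e. $t \in (t_{i_j}, t_{i_j+1})$. I define $c \in Z$ by $c(t) = c^{(j)}(t)$ whenever $t \in (t_{i_j}, t_{i_j+1})$ and $c(t) = 0$ (say) on the countable set $\{t_n : n \in \mathbb{Z}\}$; this is well-defined because the intervals of the partition are pairwise disjoint. For each $j$, let $N_j \subset (t_{i_j}, t_{i_j+1})$ be the exceptional null set on which convergence fails. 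Setting $N = \bigcup_j N_j \cup \{t_n : n \in \mathbb{Z}\}$, I obtain a $\mu$-null set (countable union of $\mu$-null sets, together with a countable set of singletons which are null by nonatomicity of $\mu$) outside of which $c_{n_k}(t) \to c(t)$.

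The only subtle point is really bookkeeping: one must check that the countable union of null sets remains null and that the partition endpoints contribute no mass, both of which follow immediately from $\mu$ being a nonatomic Borel measure. The diagonal extraction itself is routine, and the hypothesis on each $C|_{(t_i, t_{i+1})}$ is used exactly once per iteration, so there is no serious obstacle beyond organizing the indexing of $\mathbb{Z}$.
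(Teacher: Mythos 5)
Your proof is correct and follows essentially the same route as the paper's: a diagonal extraction over the countably many intervals of the partition, followed by the observation that the union of the exceptional null sets, together with the countable set of partition endpoints $\{t_n\}$ (null by nonatomicity), still has measure zero. You are in fact slightly more careful than the published proof, which does not explicitly account for the endpoints.
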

\begin{proof}
By Def. \ref{mucompact}, applying the diagonal argument we select a a subsequence $ \{ n_k \} $ and $ A_i \subset (t_i, t_{i+1}),$ $ \mu (A_i) =0,$ such that 
$ c_{n_k}(t) \rightarrow c(t)$ for any $t \in \mathbb{R} \setminus \bigcup_{i\in \mathbb{Z}} A_i.$ Since $ \mu \left( \bigcup_{i\in \mathbb{Z}} A_i \right) =0,$ we get our claim.
\end{proof}
\begin{rem}
It is clear that in Theorem \ref{increasing} and Theorem \ref{convex} the set of real numbers $T$ can be replaced by any open interval $(a,b)$ where $ a, b \in \mathbb{R} \cup \{ +\infty, -\infty \}.$ 
\end{rem}
Now we present other possible constructions of $\mu$-sequentially compact sets.
\begin{thm}
\label{constructions}
Let $(T, \Sigma, \mu)$ be a measure space. 
\begin{itemize}
\item[$(a)$] If $C$ is $\mu$-sequentially compact then any nonempty set $ D \subset C$ is $\mu$-sequentially compact.
\item[$(b)$] If $ C_i$  for $ i=1,..n$ are $\mu$-sequentially compact, then $C = \bigcup_{j=1}^{n}C_i$ is $\mu$-sequentially compact.
\item[$(c)$] Let $C_1$ and $C_2$ be $\mu$-sequentially compact subsets such that $C_i\subset\Pi_{t\in T} Z_t$ for $i=1,2$, where  $Z_t = [f(t), +\infty)$ and $f(t) >0$ for $t \in T.$ Then $C_1C_2 = \{ c_1c_2 : c_1 \in C_1, c_2 \in C_2 \}$ and $ aC_1 + bC_2$ with $ a, b\geq 0$ are  $\mu$-sequentially compact sets.
\item[$(d)$] If $C_1$ and $C_2$ are $\mu$-sequentially compact, then $ C_1\vee{C_2} = \{c_1\vee{}c_2: c_1 \in C_1, c_2 \in C_2 \}$ and $ C_1\wedge{C_2} = \{ c_1\wedge{}c_2: c_1 \in C_1, c_2 \in C_2 \}$ are $\mu$-sequentially compact sets, where $(c_1\vee{}c_2)(t) = \max(c_1(t),c_2(t))$ and $(c_1\wedge{}c_2)(t) = \min(c_1(t),c_2(t)).$
\item[$(e)$] Let $ f : T \rightarrow \mathbb{R}_+$ be a function. If $C$ is $\mu$-sequentially compact, then $ P_f(C)$ is also $\mu$-sequentially compact, where for $ c \in C$ and $t \in T,$ 
$ P_f(c)(t) = c(t)$ if $ |c(t)| \leq f(t)$ and $ P_f(c)(t)= sgn(c(t))f(t)$ in the opposite case.
\end{itemize}
\end{thm}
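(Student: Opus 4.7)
All five parts follow a uniform template: given a sequence in the constructed set, I would extract from it (and from its building blocks) a subsequence converging $\mu$-a.e.\ to some element of $Z$, invoking Definition~\ref{mucompact} on the component sequences and then transferring the convergence through the construction. The work is therefore essentially bookkeeping plus a check that each of the five pointwise operations is well-defined and continuous under $\mu$-a.e.\ convergence.

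Parts $(a)$ and $(b)$ are immediate. For $(a)$, any sequence in $D\subset C$ is a sequence in $C$, so the defining property is inherited. For $(b)$, any sequence in $\bigcup_{j=1}^{n}C_i$ contains, by the pigeonhole principle, an infinite subsequence lying in a single $C_{i_0}$, to which $\mu$-sequential compactness applies directly.

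For $(c)$ and $(d)$, given $\{c_n^1\}\subset C_1$ and $\{c_n^2\}\subset C_2$, I would extract a common subsequence $\{n_k\}$ along which both $c_{n_k}^1\to c^1$ and $c_{n_k}^2\to c^2$ hold $\mu$-a.e.\ (first thin out using $C_1$, then thin again using $C_2$). The pointwise operations $(u,v)\mapsto uv$, $(u,v)\mapsto au+bv$, $(u,v)\mapsto u\vee v$ and $(u,v)\mapsto u\wedge v$ are continuous on the relevant subset of $\overline{\mathbb R}^2$, so they pass to the $\mu$-a.e.\ limit. The role of the hypothesis $c_i(t)\ge f(t)>0$ in $(c)$ is precisely to exclude the indeterminate forms $0\cdot(\pm\infty)$ in the product and $(\pm\infty)+(\mp\infty)$ in the weighted sum with $a,b\ge 0$, thereby making both operations unambiguously defined and continuous on $[f(t),+\infty)^2$.

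For $(e)$, having extracted $c_{n_k}\to c$ $\mu$-a.e., I would verify pointwise that $P_f(c_{n_k})(t)\to P_f(c)(t)$ at every $t$ of convergence, splitting into the three cases $|c(t)|<f(t)$, $|c(t)|>f(t)$ and $|c(t)|=f(t)$: in the first two cases $c_{n_k}(t)$ eventually lies in the same branch of the truncation as $c(t)$, and on the boundary both candidate values for $P_f(c_{n_k})(t)$ still converge to $c(t)=P_f(c)(t)$. I expect this boundary analysis to be the only genuinely delicate point, because $P_f$ fails to be jointly continuous there; however the conclusion we need is only $\mu$-a.e.\ pointwise convergence, which is robust enough to absorb the failure.
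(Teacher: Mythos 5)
Your plan is correct and follows essentially the same route as the paper: parts $(a)$--$(d)$ are reduced to the definition plus stability of $\mu$-a.e.\ convergence under successive extraction of subsequences (with the countable union of null sets remaining null), and part $(e)$ is handled by the same three-way case analysis comparing $|c(t)|$ with $f(t)$. Your remark that in the boundary case $|c(t)|=f(t)$ both branches of the truncation converge to the same value is in fact slightly cleaner than the paper's wording, which passes to a further convergent subsequence pointwise at such $t$.
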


\begin{proof}
The proofs of conditions from $(a)$ to $(d)$ follows immediately from definition of the limit, Def. \ref{mucompact} and the fact that countable union of sets of measure zero has also measure zero.
To prove (e) fix a sequence $\{d_n\} \subset P_f(C).$ Let $ d_n = P_f(c_n), $ where $c_n \in C$ for any $n \in \mathbb{N}.$ Since $ C$ is $\mu$-sequentially compact, we can select a subsequence 
$ \{ c_{n_k}\}$ such that $ c_{n_k} \rightarrow c(t) \in \mathbb{R} \cup \{ -\infty, +\infty \}$ $ \mu$-a.e. If $ |c(t)| > f(t),$ then for $ k \geq k_o(t)$ $ |c_{n_k}(t)| > f(t), $ and consequently 
$$
d_{n_k}(t) =P_f(c_{n_k})(t) = f(t) \rightarrow f(t). 
$$
If $ |c(t)| = f(t),$ then passing to a convergent subsequence, if necessary, we can assume that $ d_{n_k}(t) \rightarrow f(t)$ or   $ d_{n_k}(t) \rightarrow -f(t).$ Finally, if $ |c(t)| < f(t),$ then 
$ d_{n_k}(t) = P_f(c_{n_k}) = c_{n_k}(t)$ for $ k \geq k_o(t)$ and consequently $ d_{n_k}(t) \rightarrow c(t).$ The proof is complete. 
\end{proof}
\begin{thm}
\label{F1}
Let $(T, \Sigma, \mu)$ be a measure space such that $\mu$ is $ \sigma$-finite. Let $ T = \bigcup_{n=1}^{\infty}T_n, $ where $ T_n \in \Sigma,$ $ T_n \subset T_{n+1}$ and $ \mu(T_n) < \infty.$ Put 
$$ 
L_o(T) = \{ f:T \rightarrow \mathbb{R} : f \hbox{ is } \mu-measurable \}.
$$
Let $ (X , \| \cdot \|)$ be a Fr\'echet space such that $ X \subset L_o(T).$ Assume that for any $\{ x_n\} \subset X$ and $x \in X$ if $ \| x_n-x\| \rightarrow 0,$ 
then $x_n \rightarrow x$ locally in measure. If $ C \subset X$ is $\mu$-sequentially compact then $cl(C)$ is also $\mu$-sequentially compact.
\end{thm}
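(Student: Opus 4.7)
My plan is to take an arbitrary sequence $\{y_n\}\subset cl(C)$, approximate it termwise by elements of $C$, apply the hypothesis that $C$ is $\mu$-sequentially compact to extract a pointwise-a.e.\ convergent subsequence from the approximators, and then transfer the pointwise limit back to the $y_n$'s by using the assumption that norm convergence implies local convergence in measure, together with the $\sigma$-finiteness of $\mu$.

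Concretely, I would first choose for each $n$ some $x_n\in C$ with $\|x_n-y_n\|<1/n$ (possible by definition of $cl(C)$). Since $C$ is $\mu$-sequentially compact, there exists a subsequence $\{x_{n_k}\}$ and some $c\in Z$ with $x_{n_k}(t)\to c(t)$ for $\mu$-a.e.\ $t\in T$. The assumption on $X$, applied to the null sequence $x_{n_k}-y_{n_k}$ (whose norm tends to $0$), yields $x_{n_k}-y_{n_k}\to 0$ locally in measure, i.e.\ in measure on every $T_m$.

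Now I invoke the $\sigma$-finiteness in the form $T=\bigcup_m T_m$: on $T_1$ a standard subsequence-in-measure extraction gives a subsequence converging a.e.\ on $T_1$; pass to a further subsequence converging a.e.\ on $T_2$, and so on; a diagonal selection then produces a subsequence $\{n_{k_j}\}$ for which $x_{n_{k_j}}-y_{n_{k_j}}\to 0$ $\mu$-a.e.\ on all of $T$. Writing
\[
y_{n_{k_j}}(t)=x_{n_{k_j}}(t)-\bigl(x_{n_{k_j}}(t)-y_{n_{k_j}}(t)\bigr),
\]
and combining the two a.e.\ convergences, I obtain $y_{n_{k_j}}(t)\to c(t)$ $\mu$-a.e., which is exactly the required condition from Definition \ref{mucompact} for $cl(C)$.

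The only delicate point is the step in which I combine the two limits on the null-set complement, because $c(t)$ is allowed to be $\pm\infty$: when $c(t)$ is finite, the assertion is a trivial arithmetic identity for limits of real sequences, but when $c(t)=\pm\infty$ one must argue separately that subtracting a sequence tending to $0$ from one tending to $\pm\infty$ still tends to $\pm\infty$. This is elementary, but needs to be written out to make the argument clean; apart from this, the only technical ingredient is the classical fact that convergence in measure on a finite-measure set has a pointwise a.e.\ convergent subsequence, which feeds the diagonal argument.
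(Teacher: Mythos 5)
Your proposal is correct and follows essentially the same route as the paper's proof: approximate each element of $cl(C)$ by an element of $C$ within $1/n$, use the norm-to-local-measure assumption plus $\sigma$-finiteness and a diagonal argument to get a.e.\ convergence of the differences, apply $\mu$-sequential compactness of $C$, and combine the two limits. The only differences are cosmetic (you extract the compactness subsequence before the a.e.-convergent subsequence of differences, the paper does it in the opposite order) and your explicit treatment of the $c(t)=\pm\infty$ case, which the paper leaves implicit.
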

\begin{proof}
Fix $ \{d_n\} \subset  cl(C).$ Choose for any $n \in \mathbb{N}$ $ c_n \in C$ such that $ \| c_n -d_n \| < \frac{1}{n}.$ By our assumption $ d_n - c_n \rightarrow 0$ locally in measure.
Hence for any $ i \in \mathbb{N} $ there exists a subsequence $ \{n_k\}$ (depending on $i$) such that $ c_{n_k} - d_{n_k} \rightarrow 0 $ $ \mu$-a.e. on $ T_i.$ Applying the  diagonal argument we can assume that $ c_{n_k} - d_{n_k} \rightarrow 0$ $ \mu$-a.e. on $T.$ Since $C$ is $\mu$-sequentially compact, passing to a convergent subsequence if necessary, we can assume that 
$ c_{n_k}(t) \rightarrow c(t) \in \mathbb{R} \cup \{ -\infty, +\infty \}$ $ \mu$-a.e.. Consequently $ d_{n_k}(t) \rightarrow c(t)$ $ \mu$-a.e.,
as required.
\end{proof}
Now we present the main result concerning proximanality of $ \mu$-sequentially compact sets.
\begin{thm}
 \label{proxym}
Let $(T, \Sigma, \mu)$ be a measure space.
Let $ X \subset L_o(T)$ be a Fr\'echet function space equipped with an F-norm $ \| \cdot \|$ satisfying the Fatou property. Let $C \subset X$ be a $\mu$-sequentially compact closed subset of $ X.$ Assume that for any $ \{ c_n\} \subset C,$ if there exists $ M > 0$ such that for any $n\in\mathbb{N}$, $ \| c_n\| \leq M$ and $ c_n(t) \rightarrow c(t) \in \mathbb{R}$ $\mu$-a.e., then $ c \in C.$
Let $ f \in X \setminus C.$ Assume that there exists $ \{ c_n \} \subset C$ such that $ \| f -c_n\| \rightarrow dist(f,C)$ and 
\begin{equation}
\label{restriction} 
\mu(\{ t \in T: \liminf_{n\rightarrow\infty}|c_n(t)|=+\infty \}) =0. 
\end{equation}
Then 
$$
P_C(f) = \{ c \in C: \| f-c\| = dist(f,C)\} \neq \emptyset.
$$
\end{thm}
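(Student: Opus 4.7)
The plan is to construct a best approximant as a carefully chosen pointwise-a.e.\ limit of the minimizing subsequence, then verify that this limit belongs to $C$ and attains the distance. Since $\|f - c_n\| \to \dist(f,C)$, the triangle inequality gives $\sup_n \|c_n\| \leq M$ for some constant $M > 0$. The $\mu$-sequential compactness of $C$ then furnishes a subsequence $\{c_{n_k}\}$ and an element $c \in Z$ with $c_{n_k}(t) \to c(t) \in \mathbb{R}\cup\{\pm\infty\}$ for $\mu$-a.e.\ $t \in T$.

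The main obstacle is to show that $c$ is $\mu$-a.e.\ finite, so that $c \in L_o(T)$. Set $A = \{t : |c(t)| = +\infty\}$ and assume toward a contradiction that $\mu(A) > 0$. For fixed $N \in \mathbb{N}$ consider the truncation
$$
r_k = \inf_{j \geq k}\bigl(|c_{n_j}| \wedge N\bigr)\mathbf{1}_A.
$$
On $A$ we have $|c_{n_j}(t)| \to +\infty$, so $|c_{n_j}(t)| \wedge N$ equals $N$ for all large $j$ and therefore $r_k \uparrow N\mathbf{1}_A$ pointwise $\mu$-a.e., while $r_k \equiv 0$ on $A^c$. Using $r_k \leq |c_{n_k}|$ and the solidity of the $F$-norm on the ambient function space, we obtain $\|r_k\| \leq \|c_{n_k}\| \leq M$, so the Fatou property yields $N\mathbf{1}_A \in X$ with $\|N\mathbf{1}_A\| \leq M$. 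Letting now $N \to \infty$, the sequence $\{N\mathbf{1}_A\}_{N\in\mathbb{N}}$ is monotone increasing to $+\infty\cdot\mathbf{1}_A \notin L_o(T)$ but has $F$-norms uniformly bounded by $M$; a second application of Fatou contradicts this. Hence $\mu(A) = 0$, in agreement with hypothesis~(\ref{restriction}), which encodes the natural control that makes this step possible. The closedness assumption in the statement, applied to $\{c_{n_k}\}\subset C$ with $\|c_{n_k}\| \leq M$ and $c_{n_k}(t) \to c(t) \in \mathbb{R}$ $\mu$-a.e., then places $c$ in $C$.

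It remains to show $\|f - c\| = \dist(f,C)$. The inequality $\|f - c\| \geq \dist(f,C)$ is immediate from $c \in C$. For the reverse, set $g_k = \inf_{j \geq k}|f - c_{n_j}|$; since $c_{n_j}(t) \to c(t)\in\mathbb{R}$ $\mu$-a.e., we have $g_k \uparrow |f - c|$ pointwise $\mu$-a.e., and solidity gives $\|g_k\| \leq \|f - c_{n_k}\|$. The Fatou property produces $\|g_k\| \uparrow \|f - c\|$, whence
$$
\|f - c\| = \lim_k \|g_k\| \leq \liminf_k \|f - c_{n_k}\| = \dist(f,C).
$$
This yields $c \in P_C(f)$ and hence $P_C(f)\neq\emptyset$. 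The essential difficulty is the finiteness step for $c$; once that is settled via the double truncation-Fatou device above, the remainder is a standard lower-semicontinuity argument along the minimizing subsequence.
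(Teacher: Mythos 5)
Your concluding step --- forming $g_k=\inf_{j\ge k}|f-c_{n_j}|$, noting $g_k\uparrow|f-c|$ with $\|g_k\|\le\|f-c_{n_k}\|$, and applying the Fatou property --- is exactly the paper's argument and is correct. The genuine gap is in the ``finiteness step'' that you single out as the essential difficulty. The first application of the Fatou property does give $N\mathbf{1}_A\in X$ with $\|N\mathbf{1}_A\|\le M$ for every $N$, but the advertised ``second application of Fatou'' yields no contradiction: the Fatou property is a conditional statement about increasing sequences whose pointwise limit is a finite-valued measurable function, and it says nothing at all about a norm-bounded sequence increasing to $+\infty$ on a set of positive measure. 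In a Fr\'echet function space with an $F$-norm this behaviour is perfectly possible, because $F$-norms are not homogeneous: for the space of Example~\ref{one}, $\|N\mathbf{1}_A\|=\frac{N}{1+N}\,\mu(A)\le\mu(A)$ for all $N$, so the uniform bound $\|N\mathbf{1}_A\|\le M$ is entirely compatible with $\mu(A)>0$. Your device implicitly assumes $\|N\mathbf{1}_A\|\to\infty$ as $N\to\infty$, which holds in a Banach (or $s$-convex) function space but fails in general. This is precisely why the theorem carries condition~(\ref{restriction}) as an explicit hypothesis, and why the paper proves separately, in Lemma~\ref{proxym1}, that (\ref{restriction}) is automatic only under the additional assumption $\lim_{k}\|k\chi(A)\|=+\infty$ for every $A$ with $\mu(A)>0$.

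The repair is straightforward: do not attempt to rederive finiteness from norm-boundedness. Instead invoke hypothesis~(\ref{restriction}) together with the stated closedness assumption on $C$ to conclude that the $\mu$-a.e.\ limit $c$ of the subsequence is real-valued $\mu$-a.e.\ and belongs to $C$; this is what the paper does. With that substitution the remainder of your proof coincides with the paper's.
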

\begin{proof}
Fix $ f \in X \setminus C$ and $ \{ c_n\} \subset C$ such that $ \| f -c_n\| \rightarrow dist(f,C)$ satisfying  (\ref{restriction}). Since $ C$ is $\mu$-sequentially compact, by our assumptions, there exists a subsequence $\{n_k\}$  and $ c \in C$ such that $ c_{n_k}(t) \rightarrow c(t)$ $\mu$-a.e.. 
Hence $ |(f-c_{n_k})(t)| \rightarrow |(f-c)(t)|$ $\mu$-a.e.. Put for $ k \in \mathbb{N}$ $ g_k(t) = \inf \{ |(f-c_{n_l})(t)|, l \geq k\}.$ Notice that $g_k \uparrow |f-c|$ $\mu$-a.e.
and $ \sup\{ \|g_k\|: k \in \mathbb{N} \} < \infty. $ By the Fatou property, 
$$ 
\| f-c \| = \lim_k \|g_k\|.
$$  
Since $ X$ is a Fr\'echet function space, for each $ k$ $ \| g_k \| \leq \| f-c_{n_k}\|,$ which shows that $ \| f-c\| = dist(f,C).$ Hence $P_C(f) \neq \emptyset,$ as required.
\end{proof}
Now we present some applications of Theorem \ref{proxym}.
\begin{thm}
Let $(T, \Sigma, \mu)$ be a $\sigma$-finite measure space.
Let $ X \subset L_o(T)$ be a Fr\'echet function space equipped with an F-norm $ \| \cdot \|$ satisfying the Fatou property. Let $ C \subset X$ be such that for any $ M >0$, $C \cap B(0,M)= C\cap\{ x \in X: \|x\| \leq M \}$ is compact  in the topology of local convergence in measure. Then $C$ is proximinal in $X.$
\end{thm}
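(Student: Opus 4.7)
The plan is to derive the result from Theorem \ref{proxym} applied to a suitable bounded truncation of $C$. Fix $f \in X \setminus C$ and put $d := \dist(f,C)$. Take a minimizing sequence $\{c_n\} \subset C$ with $\|f-c_n\| \to d$; the F-norm triangle inequality $\|c_n\| \leq \|c_n-f\| + \|f\|$ forces $\{c_n\}$ to be norm-bounded, say $\|c_n\| \leq M_0$. Choose any $M > M_0$ and set $C_M := C \cap B(0,M)$; then $\{c_n\} \subset C_M$ eventually, so $\dist(f,C_M) = d$, and since $P_{C_M}(f) \subset P_C(f)$, it suffices to prove $P_{C_M}(f) \neq \emptyset$.

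Next, I would verify the hypotheses of Theorem \ref{proxym} for the set $C_M$. By assumption $C_M$ is compact in the topology of local convergence in measure. Since $\mu$ is $\sigma$-finite, this topology is metrizable, so compactness equals sequential compactness. Given any sequence in $C_M$, extract a subsequence converging locally in measure to a point $c \in C_M$; then, exhausting $T$ by the sets $T_n$ of finite measure and using the standard diagonal extraction, pass to a further subsequence converging $\mu$-a.e. to $c$. This simultaneously establishes that $C_M$ is $\mu$-sequentially compact (with finite-valued limits, so the restriction (\ref{restriction}) is automatic for any sequence in $C_M$), that $C_M$ is closed in $X$ (F-norm convergence implies local convergence in measure and the latter topology is Hausdorff), and that the stability hypothesis of Theorem \ref{proxym} holds, namely that the bounded $\mu$-a.e. limit of a sequence in $C_M$ again lies in $C_M$, once more by the compactness assumption.

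With all hypotheses in place, applying Theorem \ref{proxym} to $C_M$ yields a point $c^{\ast} \in P_{C_M}(f) \subset P_C(f)$, so $C$ is proximinal. The main technical obstacle is confirming that the $\sigma$-finite assumption suffices to convert the given compactness in local convergence in measure into the $\mu$-sequential compactness required by Def. \ref{mucompact}; this is handled by the metrizability of local convergence in measure on $\sigma$-finite spaces together with the classical extraction of a $\mu$-a.e. convergent subsequence from a locally-in-measure convergent subsequence on each $T_n$.
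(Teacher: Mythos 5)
Your proposal is correct and follows essentially the same route as the paper: take a minimizing sequence, observe it is norm-bounded and hence lies in a slice $C\cap B(0,M)$, use the compactness in local convergence in measure together with exhaustion by the sets $T_n$ and a diagonal argument to extract a $\mu$-a.e.\ convergent subsequence with limit in $C$, and conclude via Theorem \ref{proxym}. You are in fact somewhat more careful than the paper in explicitly verifying the remaining hypotheses of Theorem \ref{proxym} (closedness, the stability of $C_M$ under bounded a.e.\ limits, and condition (\ref{restriction})), which the paper's proof leaves implicit.
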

\begin{proof}
Fix $ f \in X \setminus C$ and $ \{ c_n \} \subset C$ such that $ \| f - c_n \| \rightarrow dist(f,C).$ Let $T = \bigcup_{n=1}^{\infty} T_n$ such that $ \mu(T_n) < \infty .$ By the triangle inequality, 
$$  
\|c_n \| \leq dist(f,C) + \|f\| + K\quad\textnormal{with }K>0.
$$
By our assumption for any $i \in \mathbb{N}$ there exists a subsequence $ \{ n_k \} $ and $ c^i \in C|_{T_i}$ such that $ c_{n_k} \rightarrow c^i$ in measure on $ T_i.$ Passing to a convergent subsequence, if necessary, we can asume that  $ c_{n_k} \rightarrow c^i$ in $ \mu$-a.e. on $ T_i$. Applying diagonal argument, we can choose 
a subsequence $ \{ n_k \} $ such that $ c_{n_k} \rightarrow c\in C$ $\mu$-a.e. on $T.$ By Theorem \ref{proxym} $ P_C(f) \neq \emptyset,$ as required. 
\end{proof} 
First we show that (\ref{restriction}) is not very restrictive.
\begin{lem}
 \label{proxym1}
Let $(T, \Sigma, \mu)$ be a $\sigma$-finite measure space.
Let $ X \subset L_o(T)$ be a Fr\'echet function space equipped with an F-norm $ \| \cdot \|$ satisfying the Fatou property.
Assume that for any measurable $ A \subset T$ if $ \mu(A) >0$ then 
$$
\lim_{k\rightarrow \infty } \| k\chi(A)\| = + \infty,
$$ 
where $ \chi(A)$ denotes the characteristic function of $A.$
If $ \{c_n\} \subset X$ is such that $ \| c_n\| < M $ for some $ M \in \mathbb{R}$ and any $n\in\mathbb{N}$ then 
$ \{c_n\} $ holds (\ref{restriction}).
\end{lem}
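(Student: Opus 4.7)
The plan is to argue by contradiction. Assume the set $A=\{t\in T:\liminf_{n\to\infty}|c_n(t)|=+\infty\}$ satisfies $\mu(A)>0$. Using $\sigma$-finiteness, I would first shrink to a subset $A'\subset A$ with $0<\mu(A')<+\infty$. A useful observation is that on $A'$ the full sequence $|c_n|$ actually diverges to $+\infty$ pointwise, because the condition $\liminf_n|c_n(t)|=+\infty$ forces $|c_n(t)|\to+\infty$ in the extended sense.

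The next, and main, step is to extract a fixed ``uniformization'' set $B\subset A'$ of positive measure on which $|c_n|\to+\infty$ uniformly. One clean way is to invoke Egorov's theorem applied to the sequence $(1+|c_n|)^{-1}\to 0$ on the finite-measure set $A'$, producing $B$ with $\mu(B)\geq\mu(A')/2$. A hands-on alternative, avoiding Egorov, is to define $B_{k,N}=\{t\in A':|c_n(t)|\geq k\ \text{for all}\ n\geq N\}$; for each fixed $k$ one has $B_{k,N}\uparrow A'$ as $N\to\infty$, so one can choose $N_k$ with $\mu(A'\setminus B_{k,N_k})<\mu(A')/2^{k+1}$ and then set $B=\bigcap_{k\in\mathbb{N}}B_{k,N_k}$, which satisfies $\mu(B)\geq\mu(A')/2>0$.

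With such $B$ in hand, by construction for every $k\in\mathbb{N}$ there is an index $n_k$ such that $|c_{n_k}(t)|\geq k$ for every $t\in B$, and hence $|c_{n_k}|\geq k\,\chi(B)$ pointwise. The monotonicity of the $F$-norm on a Fr\'echet function space then yields $\|c_{n_k}\|\geq\|k\,\chi(B)\|$. Applying the standing hypothesis to the set $B$ (which has positive measure) gives $\|k\,\chi(B)\|\to+\infty$ as $k\to\infty$, contradicting the uniform bound $\|c_{n_k}\|<M$. This contradiction shows $\mu(A)=0$, which is exactly \eqref{restriction}.

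The main obstacle is producing a \emph{single} positive-measure set $B$ on which the divergence is uniform, since a priori the fast-growth indices may depend strongly on $t$. This is where $\sigma$-finiteness is essential: it reduces matters to a finite-measure $A'$, so that either Egorov or the Borel-Cantelli-style nested intersection above can be used. After that step, the rest of the argument is a direct appeal to monotonicity of the $F$-norm and to the characteristic-function hypothesis, and does not require the Fatou property.
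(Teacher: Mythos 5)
Your argument is correct, but it takes a genuinely different route from the paper's. The paper also argues by contradiction and also reduces to $\mu(A)<\infty$, but instead of uniformizing the divergence on a single set it works with the moving family $A_{n,k}=\{t\in A: \inf_{m\geq n}c_m(t)>k\}$, notes $A_{n,k}\uparrow A$ as $n\to\infty$, bounds $\|c_n\|\geq\|k\chi(A_{n,k})\|$ by monotonicity, and then invokes the \emph{Fatou property} to pass to the limit $\|k\chi(A_{n,k})\|\uparrow\|k\chi(A)\|$, concluding $\liminf_n\|c_n\|\geq\|k\chi(A)\|$ for every $k$. You instead fix a single positive-measure set $B$ on which $|c_n|\to\infty$ uniformly (via Egorov, or your nested-intersection construction, both of which are sound since $\liminf_n|c_n(t)|=+\infty$ does force $|c_n(t)|\to+\infty$), and then apply the hypothesis directly to $B$; this trades the Fatou property for a purely measure-theoretic uniformization. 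Your closing remark is accurate and worth noting: your proof shows the Fatou property is not actually needed for this lemma (only monotonicity of the F-norm and the hypothesis on characteristic functions), whereas the paper's proof uses it essentially. A minor point in your favour: you work with $|c_n|$ throughout, which sidesteps the sign issue the paper glosses over when it reduces ``without loss of generality'' to $c_n(t)\to+\infty$.
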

\begin{proof}
Fix $ \{c_n\} \subset X$ is such that $ \| c_n\| < M $ for some $ M \in \mathbb{R}$ and any $n\in\mathbb{N}.$
Let $A= \{ t \in T: \liminf_{n\rightarrow\infty}|c_n(t)|=+\infty \}.$ Assume to the contrary that $ \mu(A) >0.$ Without loss of generality we can assume that $ \mu(A) < \infty $ and $c_{n}(t) \rightarrow + \infty $ for any $ t \in A.$ Put for $ n \in \mathbb{N},$  $ g_n(t) = \inf_{m\geq n} c_m(t).$ It is clear that $ g_n(t) \uparrow +\infty $ for any
$t \in A.$ Fix $ k \in \mathbb{N}.$ Define for any $ n\in\mathbb{N},$
$$
A_{k,n} = \{ t \in A: g_n(t) > k\}.
$$ 
Observe that $ A_{n,k} \subset A_{n+1,k} $ and $ \bigcup_{n=1}^{\infty} A_{n,k} = A.$ Hence $ \lim_n \mu(A_{n,k}) = \mu(A).$ Notice that 
$$
\| c_n\| \geq \| g_n \chi(A_{n,k}) \| \geq \| k   \chi(A_{n,k}) \|.
$$ 
By the Fatou property 
$$
\lim_n \| k   \chi(A_{n,k}) \| = \|k  \chi(A)\|
$$
and consequently 
$$
\liminf_n \| c_n \| \geq \|k  \chi(A)\|
$$
for any $ k \in\mathbb{N} .$ Since $k$ was arbitrary, $\liminf_n \| c_n \| = +\infty,$ which leads to a contradiction.
\end{proof}
\begin{rem}
A Fr\'echet space $(X, \| \cdot \|)$ is called $ s$-convex for some $ s \in (0,1]$ if  $ \|tx\| = |t|^s\|x\|$ for any $ x \in X$ and $ t \in \mathbb{R}.$ It is clear that any $s$-convex Fr\'echet function space
(in particular any Banach function space) satisfies the assumptions of Lemma \ref{proxym1}.  Also any Orlicz space generated by an increasing function $ \phi$ satisfying $ \lim_{t \rightarrow +\infty} \phi(t) = + \infty$ 
satisfies the assumptions of Lemma \ref{proxym1}.
\end{rem}
Now, applying Theorem \ref{proxym}, Theorem \ref{increasing} and Theorem \ref{convex}, we can show
\begin {thm}
\label{increasing1}
Let $ T = \mathbb{R},$ $ T = \mathbb{R}_+$ or $ T = (a,b).$ 
Let $(T, \Sigma, \mu)$ be a $\sigma$-finite, nonatomic measure space.
Let $ X \subset L_o(T)$ be a Fr\'echet function space equipped with an F-norm $ \| \cdot \|$ satisfying the Fatou property.
Assume that for any measurable $ A \subset T$ if $ \mu(A) >0$ then 
$$
\lim_{k\rightarrow \infty } \| k\chi(A)\| = + \infty,
$$ 
where $ \chi(A)$ denotes the characteristic function of $A.$
LeT $ C $ denote the set of all increasing (decreasing, resp.) functions on $T,$ or let $ C $ denote the set of all convex (concave, resp.) functions on $T.$ If $ C \subset X$ then $C$ is proximinal in $X$
(in particular $C$ is closed in $X).$ 
\end{thm}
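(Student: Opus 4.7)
The plan is to verify the hypotheses of Theorem \ref{proxym} for $C$ and any $f \in X \setminus C$, and then invoke it. First, by Theorem \ref{increasing} (for the monotone case) or Theorem \ref{convex} (for the convex case), together with the Remark extending these results to any open interval, the ambient set of monotone or convex functions on $T$ is $\mu$-sequentially compact; the subcases of decreasing and concave functions follow by passing from $c_n$ to $-c_n$. By Theorem \ref{constructions}(a) the subset $C \subset X$ is therefore $\mu$-sequentially compact.

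Next I verify the ``a.e.\ limit of a bounded sequence'' assumption of Theorem \ref{proxym}. Suppose $\{c_n\} \subset C$, $\|c_n\| \leq M$, and $c_n(t) \to c(t) \in \mathbb{R}$ $\mu$-a.e. The monotonicity inequality $c_n(s) \leq c_n(t)$ for $s<t$, or the convexity inequality $c_n(\lambda s + (1-\lambda)t) \leq \lambda c_n(s) + (1-\lambda) c_n(t)$, passes to the limit on a set of full measure; after modifying on a null set, $c$ is itself monotone (resp.\ convex) and thus represents an element of $C$, provided $c \in X$. The latter follows by applying the Fatou property to $g_k = \inf_{n \geq k} |c_n|$, which satisfies $g_k \uparrow |c|$ $\mu$-a.e.\ with $\|g_k\| \leq \|c_k\| \leq M$.

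To invoke Theorem \ref{proxym} I also need $C$ closed in $X$. Given $c_n \in C$ with $\|c_n - c\| \to 0$, I extract a rapidly Cauchy subsequence with $\|c_{n_{k+1}} - c_{n_k}\| < 2^{-k}$. The F-norm triangle inequality gives $\|\sum_{k=1}^N |c_{n_{k+1}} - c_{n_k}|\| \leq 1$ uniformly in $N$, and the Fatou property then forces $\sum_{k=1}^\infty |c_{n_{k+1}} - c_{n_k}|$ to lie in $X$ and hence to be finite $\mu$-a.e. Thus $\{c_{n_k}(t)\}$ is Cauchy in $\mathbb{R}$ a.e.\ and converges a.e.\ to some $\tilde c$. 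A second Fatou argument, applied to $\inf_{l \geq k}|c_{n_l} - c| \uparrow |\tilde c - c|$, shows $\|\tilde c - c\| = 0$, so $c = \tilde c$ $\mu$-a.e., and the previous paragraph delivers $c \in C$.

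Finally, for any $f \in X \setminus C$ and any minimizing sequence $\{c_n\} \subset C$, the triangle inequality yields a uniform bound $\|c_n\| \leq \|f\| + \|f - c_n\|$. The standing assumption that $\lim_{k \to \infty} \|k\chi(A)\| = +\infty$ whenever $\mu(A) > 0$ is precisely the hypothesis of Lemma \ref{proxym1}, which supplies condition (\ref{restriction}) for $\{c_n\}$. Theorem \ref{proxym} then provides $P_C(f) \neq \emptyset$, and the closedness just established is the parenthetical conclusion of the statement. The step requiring the most care is the closedness argument, since F-norm convergence need not imply convergence in measure in general; the rapidly-Cauchy-plus-Fatou route is what bridges this gap and lets one extract a pointwise a.e.\ convergent subsequence.
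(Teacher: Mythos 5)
Your proof is correct and follows essentially the same route as the paper: reduce everything to Theorem \ref{proxym}, using Theorems \ref{increasing}, \ref{convex} and \ref{constructions}(a) for the $\mu$-sequential compactness of $C$, the stability of monotonicity/convexity under a.e.\ limits for the ``bounded a.e.\ limit'' hypothesis, and Lemma \ref{proxym1} for condition (\ref{restriction}). The one point where you go beyond the paper is the closedness of $C$: the paper extracts an a.e.\ convergent subsequence from a norm-convergent sequence and never explains why the a.e.\ limit coincides with the norm limit, whereas your rapidly-Cauchy-subsequence-plus-Fatou argument supplies exactly that missing link, so your write-up is, if anything, more complete than the published proof.
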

\begin{proof}
Assume that $C$ is the set of all increasing functions on $T.$ Fix $ f \in X$ and $ \{ c_n \} \subset C$ be such that $ \| f - c_n\| \rightarrow 0.$ By Theorem \ref{increasing}, there exists  a subsequence 
$ \{ n_k\}$ such that $ c_{n_k}(t) \rightarrow c(t) \in [-\infty, +\infty]$  $ \mu$-a.e.. Since the sequence $ \| c_n\|$ is bounded, by our assumptions $c(t) \in \mathbb{R}$ $\mu$-a.e..
It is easy to see that $c$ is also an increasing function. By Theorem \ref{proxym} we get our result. If $C$ is the set of all decreasing functions on $T$ applying the above reasoning to $-C$ we get our claim.
\newline
If $C$ is the set of all convex (concave, resp.) functions on $T$ applying Theorem \ref{convex} instead of Theorem \ref{increasing} and reasoning as above, we get our conclusion.
\end{proof}
\begin{rem}
\label{var1}
Applying Theorem \ref{increasing1} and Theorem \ref{constructions} we can find other examples of proximinal subsets of Fr\'echet function spaces $X$ contained in $ L_o(T).$ 
\end{rem}
\begin{thm}
\label{pieces}
Let $ T = (a,b)$ and let $ \{t_n\} \subset (a,b),$ $t_n < t_{n+1}$ be a partition of $(a,b).$  Let for $ n \in \mathbb{N},$ $(T_n=(t_n,t_{n+1}), \Sigma_n, \mu_n)$ be a $\sigma$-finite, nonatomic measure space.
Let $$ 
L_{o,n} = \{f: T_n \rightarrow \mathbb{R}, f \hbox{ is }\mu_n\hbox{-measurable}\}.
$$
Let $ X_n \subset L_{o,n}$ be a Fr\'echet space equipped with an F-norm $ \| \cdot \|_n.$ 
Define
$$
X = \{ f:(a,b)\rightarrow \mathbb{R}: f|_{T_n} \in X_n\}
$$
and for $ f \in X$
$$
\| f\| = \sum_{n=1}^{\infty} \frac{\|f|_{T_n}\|_n}{2^n(1+\|f|_{T_n}\|_n)}.
$$
Let for $ n \in \mathbb{N}$ $ C_n \subset X_n$ be a proximinal set in $ X_n.$ Put 
$$
C = \{ f\in{X}: f|_{T_n} \in C_n\}.
$$
Then $ C$ is proximinal in $X.$
\end{thm}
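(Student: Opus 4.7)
The plan is to use the componentwise structure of both $X$ and $C$ together with the monotonicity of the F-norm to reduce the problem to the proximinality already assumed for each pair $(X_n, C_n)$. The governing observation is that the F-norm on $X$ is a weighted sum of monotone functions of the F-norms on the pieces, so the global infimum is attained by minimizing on each piece separately.

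Given $f\in X$, I would choose, for each $n\in\mathbb{N}$, a best approximant $g_n\in C_n$ to $f|_{T_n}$, so that $\|f|_{T_n}-g_n\|_n=d_n:=\mathrm{dist}_n(f|_{T_n},C_n)$; this is possible because $C_n$ is proximinal in $X_n$. I then define $g\colon(a,b)\to\mathbb{R}$ by $g|_{T_n}=g_n$ (assigning, say, the value $0$ at the countable null set of partition points $\{t_n\}$). By construction $g|_{T_n}=g_n\in C_n$, so $g\in C$; since every summand of $\|g\|$ is bounded by $2^{-n}$, the defining series converges, so $g\in X$.

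The heart of the argument is showing $\|f-g\|=\mathrm{dist}(f,C)$. The scalar function $\phi(t)=t/(1+t)$ is strictly increasing on $[0,\infty)$, so for any competitor $h\in C$, the fact that $h|_{T_n}\in C_n$ forces $\|f|_{T_n}-h|_{T_n}\|_n\geq d_n$ for every $n$, whence termwise
\begin{equation*}
\frac{\|f|_{T_n}-h|_{T_n}\|_n}{2^n(1+\|f|_{T_n}-h|_{T_n}\|_n)} \;\geq\; \frac{d_n}{2^n(1+d_n)}.
\end{equation*}
Summing over $n$ yields $\|f-h\|\geq\|f-g\|$. Since $g\in C$, taking the infimum over $h\in C$ on the left gives $\mathrm{dist}(f,C)\geq\|f-g\|\geq\mathrm{dist}(f,C)$, so equality holds and $g\in P_C(f)$.

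I do not foresee any serious obstacle: the proof is essentially the observation that the F-norm on $X$ splits as $\sum_n 2^{-n}\phi(\|\cdot|_{T_n}\|_n)$ with $\phi$ monotone, so componentwise minimization solves the global problem, with no interaction between the pieces. The only technical point worth mentioning is the implicit verification that $\|\cdot\|$ is genuinely an F-norm on $X$ (translation invariance and symmetry are inherited from the $\|\cdot\|_n$, while the triangle inequality rests on the subadditivity of $\phi$ on $[0,\infty)$), but this is standard and already presupposed by the statement of the theorem.
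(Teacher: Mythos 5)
Your proposal is correct and follows essentially the same route as the paper's proof: choose a best approximant $c_n\in P_{C_n}(f|_{T_n})$ on each piece, glue them into an element of $C$, and use the monotonicity of $t\mapsto t/(1+t)$ to see that every competitor's $F$-norm is bounded below termwise by $\sum_n \frac{\mathrm{dist}_n(f|_{T_n},C_n)}{2^n(1+\mathrm{dist}_n(f|_{T_n},C_n))}$, which the glued element attains.
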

\begin{proof}
First observe that for any $ f \in X$ and $ c \in C,$
$$
\| f - c\| = \sum_{n=1}^{\infty} \frac{\|(f-c)|_{T_n}\|_n}{2^n(1+\|(f-c)|_{T_n}\|_n)} 
$$
$$
\geq \sum_{n=1}^{\infty} \frac{dist_n(f|_{T_n},C_n)}{2^n(1+dist_n(f|_{T_n},C_n)))},
$$
where for any $g \in X_n$ $dist_n(g,C_n)$ denotes the distance of $g$ to $C_n$ with respect to $ \| \cdot \|_n.$ 
For $ n \in \mathbb{N}$ fix $ c_n \in P_{C_n}(f|_{T_n}).$ Define $ c \in C$ by $c(t) = c_n(t) $ for $ t \in T_n$. Since $ T_n  \cap T_m = \emptyset $ for $ n\neq m,$ it is easy to see $c$ is well defined. Notice that,  
$$
\| f - c\| =\sum_{n=1}^{\infty} \frac{dist_n(f|_{T_n},C_n)}{2^n(1+dist_n(f|_{T_n},C_n)))},
$$
which shows that $ c \in P_C(f).$ The proof is complete.
\end{proof}
\begin{rem}
\label{var2}
If for any $ n\in \mathbb{N}$
$ C_n $ denote the set of all increasing (decreasing, resp.) functions on $T_n,$ or $ C_n $ denote the set of all convex (concave, resp.) functions on $T_n$ and $X_n \subset L_{o,n}$ is a Fr\'echet function space satysfying the assumptions of Theorem \ref{increasing1} (see also Rem. \ref{var1}), then by Theorem \ref{increasing} we can apply Theorem \ref{pieces} to $ \{ X_n\} $ and $ \{C_n\}.$
\end{rem}
Now we apply Theorem \ref{pieces} and Remark \ref{var2} to construct Chebyshev subsets in Fr\'echet function spaces.
\begin{thm}
\label{pieces1}
Let $X_n,$ $C_n,$ $\| \cdot \|_n$ for any $n\in\mathbb{N}$ and $C$, $X$ be as in Theorem \ref{pieces}. Assume that for any $ n \in \mathbb{N}$ the set $C_n$ is a Chebyshev subset of $X_n$ with respect to $ \| \cdot \|_n$. Then $C$ is a Chebyshev subset of $X$ with respect to $ \| \cdot \|.$ 
\end{thm}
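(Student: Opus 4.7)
The plan is to reduce the problem to uniqueness, since proximinality of $C$ is already provided by Theorem \ref{pieces}. So fix $f\in X$ and take any $c\in P_C(f)$. I will show that $c|_{T_n}\in P_{C_n}(f|_{T_n})$ for every $n\in\mathbb{N}$; once this is established, the hypothesis that each $C_n$ is Chebyshev pins down $c|_{T_n}$ uniquely for every $n$, which determines $c$ on all of $(a,b)$.

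To carry out the reduction, I will reuse the chain of inequalities displayed in the proof of Theorem \ref{pieces}, namely
$$
\|f-c\| \;=\; \sum_{n=1}^{\infty}\frac{\|(f-c)|_{T_n}\|_n}{2^n(1+\|(f-c)|_{T_n}\|_n)}\;\geq\;\sum_{n=1}^{\infty}\frac{\mathrm{dist}_n(f|_{T_n},C_n)}{2^n(1+\mathrm{dist}_n(f|_{T_n},C_n))},
$$
while from the explicit best approximant constructed in that proof, the right-hand side equals $\mathrm{dist}(f,C)$. Since $c\in P_C(f)$, the left-hand side also equals $\mathrm{dist}(f,C)$, so the inequality must be an equality term-by-term. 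The key observation is that the real function $\phi(t)=t/(1+t)$ is strictly increasing on $[0,\infty)$, and by definition of $\mathrm{dist}_n$ one has $\|(f-c)|_{T_n}\|_n\geq \mathrm{dist}_n(f|_{T_n},C_n)$ for every $n$. Strict monotonicity of $\phi$ therefore forces $\|(f-c)|_{T_n}\|_n = \mathrm{dist}_n(f|_{T_n},C_n)$ for each $n$, i.e.\ $c|_{T_n}\in P_{C_n}(f|_{T_n})$.

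Now the Chebyshev hypothesis says $P_{C_n}(f|_{T_n})$ is a singleton for each $n$, so $c|_{T_n}$ is uniquely determined for each $n\in\mathbb{N}$. Since $\{T_n\}$ is a partition of $(a,b)$, this determines $c$ uniquely on $(a,b)$, hence $P_C(f)$ is a singleton and $C$ is Chebyshev.

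I do not anticipate a serious obstacle here: the theorem is essentially a uniqueness statement that rides on the componentwise structure of $\|\cdot\|$ together with the strict monotonicity of $t\mapsto t/(1+t)$. The only point deserving a sentence of care is the justification that equality in the summed inequality propagates to each summand; this is immediate because all summands on both sides are nonnegative and the pointwise comparison $\phi(\|(f-c)|_{T_n}\|_n)\geq \phi(\mathrm{dist}_n(f|_{T_n},C_n))$ holds term-by-term, so any strict inequality in a single term would contradict the equality of the two sums.
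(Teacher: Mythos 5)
Your proposal is correct and follows essentially the same route as the paper: both use the identity from Theorem \ref{pieces} expressing $\mathrm{dist}(f,C)$ as the series of transformed componentwise distances, deduce that any $c\in P_C(f)$ must satisfy $c|_{T_n}\in P_{C_n}(f|_{T_n})$ for every $n$, and conclude uniqueness from the Chebyshev hypothesis on each $C_n$. Your write-up is in fact slightly more careful than the paper's, since you explicitly justify the term-by-term equality via the strict monotonicity of $t\mapsto t/(1+t)$, a step the paper leaves implicit.
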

\begin{proof}
By Theorem \ref{pieces} for any $ f \in X$ $ P_C(f) \neq \emptyset . $ Moreover for any $ c \in C,$
$$
\| f - c\| = \sum_{n=1}^{\infty} \frac{dist_n(f|_{T_n},C_n)}{2^n(1+dist_n(f|_{T_n},C_n)))}.
$$
This implies that for any $ n \in \mathbb{N}$ $ c|_{T_n} \in P_{C_n}(f|_{T_n}).$ Since $C_n$ is a Chebyshev set in $ X_n,$ $ c|_{T_n}$ is uniquely determined for any $ n \in \mathbb{N}.$ Hence $ c \in P_C(f)$ is uniquely determined, which shows that $C$ is a Chebyshev subset of $X.$
\end{proof}
Now we present an example of a Chebyshev set in a Fr\'echet function space.
\begin{exam}
\label{interesting}
Let $ T = (a,b)$ and let $ \{t_n\} \subset (a,b),$ $t_n < t_{n+1}$ be a partition of $(a,b).$ 
Let for $ n \in \mathbb{N},$ $(T_n=(t_n,t_{n+1}), \Sigma_n, \mu_n)$ be a $\sigma$-finite, nonatomic measure space.
Let $$ 
L_{o,n} = \{f: T_n \rightarrow \mathbb{R}, f \hbox{ is }\mu_n\hbox{-measurable}\}.
$$
Let $ X_n \subset L_{o,n}$ be a strictly convex, reflexive Banach space equipped with a norm $ \| \cdot \|_n.$ Fore example $ X_n = L^{p_n}(T_n),$ where $ 1 < p_n < \infty.)$ 
Define
$$
X = \{ f:(a,b) \rightarrow \mathbb{R}: f|_{T_n} \in X_n\}
$$
and for $ f \in X$
$$
\| f\| = \sum_{n=1}^{\infty} \frac{\|f|_{T_n}\|_n}{2^n(1+\|f|_{T_n}\|_n)}.
$$
Let for $ n \in \mathbb{N},$ $ C_n \subset X_n$ be a closed, convex subset of $ X_n.$ Put 
$$
C = \{ f:(a,b) \rightarrow \mathbb{R}: f|_{T_n} \in C_n\}.
$$
Since for any $n\in\mathbb{N},$ $ X_n$ is reflexive and $C_n$ is closed and convex it follows that $C_n$ is a Chebyshev  subset of $X_n$ with respect to $\|\cdot\|_n$. Hence $C_n$ is a Chebyshev set with respect to $F$-norm $\frac{\|\cdot\|_n}{1+\|\cdot\|_n}$. By Theorem \ref{pieces1} $C$ is a Chebyshev subset of $X.$  
In particular, $ C_n $ can be the set of all increasing (decreasing, resp.) functions on $T_n$ or $ C_n $ can be the set of all convex (concave, resp.) functions on $T_n.$ It is clear that $C$ is a Chebyshev set.
\end{exam}

Now we research $\mu$-compactness in Banach function spaces. 

\begin{defn}
	A nonempty subset $C\subset{Z}$ is said to be $\mu$-compact if and only if for any sequence $(c_n)\subset{C}$ there exist $(n_k)\subset\mathbb{N}$ and $c\in{C}$ such that $c_{n_k}$ converges to $c$ in
 measure $\mu.$
\end{defn}

\begin{rem}
	Let $C$ be a nonempty subset of $L^*=\{x^*:x\in{L^0}\}$, where $x^*$ is a decreasing rearrangement of $x\in{L^0}$. Then, by Theorem \ref{increasing} and Theorem \ref{constructions} we get immediately $C$
 is $\mu$-sequentially compact. Additionally, if we assume that $c(\infty)=0$ for any $c\in{C}$, then $C$ is $\mu$-compact. Indeed, since $C$ is $\mu$-sequentially compact for any sequence $(c_n)\subset{C}$ there
 exist $(n_k)\subset\mathbb{N}$ and $c\in{C}$ such that $c_{n_k}$ converges to $c$ $m$-a.e. Hence, $c_{n_k}$ converges to $c$ locally in measure and by assumption that each element $a\in C$, $a=a^*$ and
 $a(\infty)=0$ it follows that $c_{n_k}$ converges to $c$ in measure.
\end{rem}

Now we present properties of $\mu$-compact sets. The proof of the below result follows immediately by definition of $\mu$-compactness and similarly as in case of Theorem \ref{constructions}.
\begin{thm}
	\label{constructions2}
	Let $(T, \Sigma, \mu)$ be a measure space. 
	\begin{itemize}
		\item[$(a)$] If $C$ is $\mu$-compact then any nonempty set $ D \subset C$ is $\mu$-compact.
		\item[$(b)$] If $ C_i$  for $ i=1,..n$ are $\mu$-compact, then $C = \bigcup_{j=1}^{n}C_i$ is $\mu$-compact.
		\item[$(c)$] Let $C_1$ and $C_2$ be $\mu$-compact subsets such that $C_i\subset\Pi_{t\in T} Z_t$ for $i=1,2$, where  $Z_t = [f(t), +\infty)$ and $f(t) >0$ for $t \in T.$ Then $ aC_1 + bC_2$ with $ a, b\geq 0$ are  $\mu$-compact sets. Additionally, if $\mu(T)<\infty$ then $C_1C_2 = \{ c_1c_2 : c_1 \in C_1, c_2 \in C_2 \}$ is $\mu$-compact. 
		\item[$(d)$] If $C_1$ and $C_2$ are $\mu$-compact, then $ C_1\vee{C_2} = \{c_1\vee{}c_2: c_1 \in C_1, c_2 \in C_2 \}$ and $ C_1\wedge{C_2} = \{ c_1\wedge{}c_2: c_1 \in C_1, c_2 \in C_2 \}$ are $\mu$-compact sets, where $(c_1\vee{}c_2)(t) = \max(c_1(t),c_2(t))$ and $(c_1\wedge{}c_2)(t) = \min(c_1(t),c_2(t)).$
		\item[$(e)$] Let $ f : T \rightarrow \mathbb{R}_+$ be a function. If $C$ is $\mu$-compact, then $ P_f(C)$ is also $\mu$-compact, where for $ c \in C$ and $t \in T,$ $P_f(c)(t) = sgn(c(t))\{|c(t)|\wedge{f(t)}\}$.
	\end{itemize}
\end{thm}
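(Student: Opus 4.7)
\emph{Plan.} The plan is to mimic the proof of Theorem \ref{constructions}, replacing $\mu$-a.e.\ convergence throughout by convergence in measure. The unifying technical fact is that convergence in measure is preserved under pointwise operations that are Lipschitz in their arguments, while products require the additional assumption $\mu(T)<\infty$. For each clause I would extract, via $\mu$-compactness of the relevant factor sets, a subsequence that converges in measure to an element of the factor set, and then identify the limit of the operated sequence by an appropriate pointwise Lipschitz inequality.

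Parts $(a)$ and $(b)$ are essentially formal. For $(a)$ any sequence in $D\subset C$ is a sequence in $C$, and the subsequence and limit furnished by $\mu$-compactness of $C$ serve as the required ones. For $(b)$ the pigeonhole principle places infinitely many terms of the given sequence in a single $C_i$, and then $\mu$-compactness of $C_i$ applies.

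For $(c)$, given $ac_{1,n}+bc_{2,n}$ with $c_{i,n}\in C_i$, I would extract successively so that $c_{1,n_k}\to c_1\in C_1$ and $c_{2,n_{k_j}}\to c_2\in C_2$ in measure; the pointwise inequality
\[
|ac_{1,n}(t)+bc_{2,n}(t)-ac_1(t)-bc_2(t)|\leq a\,|c_{1,n}(t)-c_1(t)|+b\,|c_{2,n}(t)-c_2(t)|
\]
then gives convergence in measure of the sum to $ac_1+bc_2\in aC_1+bC_2$. For the product, after the same extraction I would use the decomposition $c_{1,n}c_{2,n}-c_1c_2=(c_{1,n}-c_1)c_{2,n}+c_1(c_{2,n}-c_2)$ together with the standard fact that on a finite measure space convergence in measure is preserved under products, which ultimately rests on tightness in measure of the factors and hence on $\mu(T)<\infty$. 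This is the main obstacle and precisely the reason the finite-measure hypothesis is imposed.

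Parts $(d)$ and $(e)$ reduce to pointwise $1$-Lipschitz estimates. For $(d)$ the inequality $|\max(a,b)-\max(a',b')|\leq|a-a'|+|b-b'|$, and its $\min$-analogue, transfer convergence in measure of the components to convergence in measure of $c_1\vee c_2$ and $c_1\wedge c_2$. For $(e)$ the mapping $x\mapsto\sg(x)\min(|x|,f(t))$ is the metric projection of $\mathbb{R}$ onto $[-f(t),f(t)]$ and is therefore $1$-Lipschitz uniformly in $t$, so
\[
|P_f(c_n)(t)-P_f(c)(t)|\leq|c_n(t)-c(t)|\quad\text{for every }t\in T,
\]
which immediately transfers convergence in measure of a subsequence of $(c_n)$ to convergence in measure of the corresponding subsequence of $(P_f(c_n))$, with limit $P_f(c)\in P_f(C)$.
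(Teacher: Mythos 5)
Your proposal is correct, and it actually supplies more than the paper does: the paper gives no proof of this theorem at all, stating only that it ``follows immediately by definition of $\mu$-compactness and similarly as in case of Theorem \ref{constructions}.'' Your route is the right way to fill that gap, and in places it is cleaner than the paper's template. In particular, for part $(e)$ the paper's proof of the analogous statement in Theorem \ref{constructions} proceeds by a pointwise case analysis on $|c(t)|$ versus $f(t)$ (choosing subsequences at individual points $t$), which does not transfer to convergence in measure; your observation that $x\mapsto\sg(x)\min(|x|,f(t))$ is the nearest-point map onto $[-f(t),f(t)]$ and hence $1$-Lipschitz uniformly in $t$ is exactly what makes the in-measure version work, and likewise the elementary inequality $|\max(a,b)-\max(a',b')|\le|a-a'|+|b-b'|$ handles $(d)$. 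Your treatment of $(c)$ correctly isolates the one genuinely non-formal point: the bilinear term $(c_{1,n}-c_1)c_{2,n}$ is controlled only because $c_2$ is tight on a finite measure space (choose $M$ with $\mu(\{|c_2|>M\})$ small, so $\mu(\{|c_{2,n}|>M+1\})$ is eventually small), which is precisely where $\mu(T)<\infty$ enters and why the sum, unlike the product, needs no such hypothesis. The only cosmetic caveat is that, as in the paper itself, one should tacitly assume the limit functions are a.e.\ finite for the pointwise differences to make sense; this is a looseness inherited from the paper's definitions rather than a defect of your argument.
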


\begin{thm}\label{thm:app-com}
	Let $E$ be a symmetric Banach function space with semi-Fatou property and $H_g$ property and let $C\subset{E}$ be nonempty $\mu$-compact subset of $E$. Then for every $x\in{E}\setminus{C}$ and for any minimizing sequence $(c_n)\subset{C}$ of $\dist(x,C)$ there exist a subsequence $(c_{n_k})\subset(c_n)$ and $c\in{P}_{C}(x)$ such that $$\norm{c_{n_k}-c}{}{}\rightarrow{0}.$$
\end{thm}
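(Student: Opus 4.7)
The plan is to exploit $\mu$-compactness of $C$ to extract a subsequence with a good pointwise limit in $C$, use the semi-Fatou property to identify this limit as a best approximant, and then invoke the $H_g$ property to promote convergence in measure of $(x-c_{n_k})$ to norm convergence. First I would apply $\mu$-compactness to the minimizing sequence $(c_n)$ to extract a subsequence $(c_{n_k})$ and an element $c \in C$ with $c_{n_k} \to c$ globally in $\mu$-measure; passing to a further subsequence (and relabelling) I may assume in addition that $c_{n_k}(t) \to c(t)$ for $\mu$-a.e. $t$, while of course retaining $\|x - c_{n_k}\| \to \dist(x,C)$ and global convergence in measure.

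Next I would check $c \in P_C(x)$ by a Fatou-type argument. Set $g_k = \inf_{l \geq k} |x - c_{n_l}|$. Each $g_k$ is measurable with $0 \leq g_k \leq |x - c_{n_k}| \in E$, so by the ideal property $g_k \in E$ and $\|g_k\| \leq \|x - c_{n_k}\|$. Moreover $g_k \uparrow |x - c|$ $\mu$-a.e., and $|x - c| \in E$ since $x \in E$ and $c \in C \subset E$. The semi-Fatou property then gives $\|g_k\| \uparrow \|x - c\|$, hence
$$\|x - c\| = \lim_{k} \|g_k\| \leq \lim_k \|x - c_{n_k}\| = \dist(x, C),$$
and the reverse inequality is automatic since $c \in C$. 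Thus $c \in P_C(x)$.

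Finally I would apply the $H_g$ property to $y_k = x - c_{n_k}$ and $y = x - c$. From $c_{n_k} \to c$ globally in measure one has $y_k \to y$ globally in measure, and by the previous step $\|y_k\| = \|x - c_{n_k}\| \to \dist(x,C) = \|x-c\| = \|y\|$. The $H_g$ property then yields $\|y_k - y\| = \|c_{n_k} - c\| \to 0$, as required. The only point of any subtlety, rather than a genuine obstacle, is juggling the two modes of convergence available from $\mu$-compactness: $\mu$-a.e.\ convergence feeds the semi-Fatou step while global convergence in measure feeds the $H_g$ step, and one extra subsequence extraction provides both simultaneously.
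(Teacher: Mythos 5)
Your proposal is correct and follows essentially the same route as the paper: extract a subsequence converging globally in measure via $\mu$-compactness, show the limit lies in $P_C(x)$ via lower semicontinuity of the norm, and then upgrade to norm convergence via the $H_g$ property. The only difference is that where the paper obtains the inequality $\|x-c\|\leq\liminf_k\|x-c_{n_k}\|$ by citing Lemma 3.8 of \cite{CieKolPlu}, you prove it directly with the functions $g_k=\inf_{l\geq k}|x-c_{n_l}|$ and the semi-Fatou property (after passing to an a.e.\ convergent further subsequence), which makes the argument self-contained.
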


\begin{proof}
	Assume that $(c_n)\subset{C}$ is a minimizing sequence of $\dist(x,C)>0$, i.e. 
	\begin{equation}\label{equ:thm:H_g:in:approx}
	\lim_{n\rightarrow\infty}\norm{x-c_n}{}{}=\inf_{d\in{C}}\norm{x-d}{}{}>0.
	\end{equation}
	Since $C$ is $\mu$-compact there exist $(c_{n_k})\subset(c_n)$ and $c\in{C}$ such that $c_{n_k}$ converges to $c$ globally in measure. Therefore, $x-c_{n_k}$ converges to $x-c$ globally in measure. Consequently, by assumption that $E$ has $H_g$ property and by Lemma 3.8 \cite{CieKolPlu} it follows that 
	\begin{equation*}
	\norm{x-c}{}{}\leq\liminf_{k\rightarrow\infty}\norm{x-c_{n_k}}{}{}.
	\end{equation*}
	Hence, by condition \eqref{equ:thm:H_g:in:approx} we obtain
	\begin{align*}
	\dist(x,C)\leq\norm{x-c}{}{}\leq\liminf_{k\rightarrow\infty}\norm{x-c_{n_k}}{}{}\leq\lim_{k\rightarrow\infty}\norm{x-c_{n_k}}{}{}=\dist(x,C).
	\end{align*}
	Finally, according to assumption that $E$ has $H_g$ property we get 
	\begin{equation*}
	\norm{c_{n_k}-c}{}{}\rightarrow{0}.
	\end{equation*}
\end{proof}

\begin{coro}
	Let $\phi$ be a concave increasing function such that $\phi(0^+)=0$ and let $C\subset{\Lambda_\phi}$ be a nonempty $\mu$-compact set and $x\in\Lambda_\phi\setminus{C}$. Then for any minimizing sequence $(c_n)\subset{C}$ of $\dist(C,x)$ there exist $(n_k)\subset\mathbb{N}$ and $c\in{P_C(x)}$ such that
	\begin{equation*}
	\norm{c_{n_k}-c}{\Lambda_{\phi}}{}\rightarrow{0}.
	\end{equation*}
\end{coro}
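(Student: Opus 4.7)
The plan is to derive this corollary as a direct application of Theorem \ref{thm:app-com}, taking $E = \Lambda_\phi$. To do so, I need to verify the three standing hypotheses on $E$ in that theorem: that $\Lambda_\phi$ is a symmetric (rearrangement-invariant) Banach function space, that it enjoys the semi-Fatou property, and that it has the $H_g$ property (the Kadec-Klee property for global convergence in measure). Once these three ingredients are in place, the conclusion transfers verbatim from Theorem \ref{thm:app-com}, since the hypothesis that $C \subset \Lambda_\phi$ is $\mu$-compact and $x \in \Lambda_\phi \setminus C$ together with the minimizing sequence assumption are identical to the hypotheses of that theorem.

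First I would recall that, under the assumption that $\phi$ is concave and increasing with $\phi(0^+) = 0$, the functional $\|x\|_{\Lambda_\phi} = \int_0^\alpha x^*(t)\, d\phi(t)$ defines a norm making $\Lambda_\phi$ into a symmetric Banach function space; this is classical (see \cite{BS, KPS}). Next I would note that $\Lambda_\phi$ has the Fatou property under these assumptions, and the Fatou property trivially implies the semi-Fatou property, so the second hypothesis is verified. For the $H_g$ property I would invoke the known Kadec-Klee property for global convergence in measure in $\Lambda_\phi$ for a concave, increasing $\phi$ with $\phi(0^+) = 0$, which is precisely the classical setting where property $S$ (alluded to in the introduction and attributed to \cite{KPS}) yields the $H_g$ property for Lorentz spaces.

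Having verified the three structural hypotheses, the proof concludes by applying Theorem \ref{thm:app-com} directly to the minimizing sequence $(c_n) \subset C$: this produces a subsequence $(c_{n_k})$ and an element $c \in P_C(x)$ with $\|c_{n_k} - c\|_{\Lambda_\phi} \to 0$, which is exactly the desired conclusion.

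The step I would expect to be the most delicate is the verification of the $H_g$ property for $\Lambda_\phi$ under only the stated hypotheses on $\phi$, since in general the Kadec-Klee property for global convergence in measure in Lorentz spaces is known to require some regularity of $\phi$ (for instance, strict concavity or non-degeneracy conditions akin to property $S$). Whether this is automatic under only "$\phi$ concave, increasing, $\phi(0^+) = 0$" or whether an additional (possibly implicit) regularity condition on $\phi$ is needed would be the main point to pin down carefully; the rest of the argument is a routine invocation of the preceding theorem.
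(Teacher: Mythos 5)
Your proposal is correct and follows essentially the same route as the paper: both reduce the corollary to Theorem \ref{thm:app-com} by checking that $\Lambda_\phi$ is a symmetric Banach function space with the semi-Fatou property (deduced from the Fatou property) and the Kadec--Klee property for global convergence in measure. The one point you flag as delicate is settled in the paper by citing Corollary 1.3 of \cite{ChDSS} for the $H_g$ property of $\Lambda_\phi$ under exactly the stated hypotheses on $\phi$, rather than the property $(S)$ machinery you allude to.
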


\begin{proof}
	By Corollary 1.3 \cite{ChDSS} it follows that $\lambda_\phi$ has the Kadec Klee property for global convergence in measure. Since $\Lambda_{\phi}$ satisfies Fatou property \cite{Loren} we have $\Lambda_\phi$ has also semi-Fatou property and by Theorem \ref{thm:app-com} we complete the proof.
\end{proof}

\begin{coro}
	Let $0<p<\infty$ and $w\geq{0}$ be weight function on $I$. Let $C\subset{\Lambda_\phi}$ be a nonempty $\mu$-compact set and $x\in\Gamma_{p,w}\setminus{C}$. Then for any minimizing sequence $(c_n)\subset{C}$ of $\dist(C,x)$ there exist $(n_k)\subset\mathbb{N}$ and $c\in{P_C(x)}$ such that
	\begin{equation*}
	\norm{c_{n_k}-c}{\Gamma_{p,w}}{}\rightarrow{0}.
	\end{equation*}
\end{coro}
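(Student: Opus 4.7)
My plan is to deduce this corollary from Theorem \ref{thm:app-com} in exactly the same fashion as the preceding corollary treated $\Lambda_\phi$: reading the hypothesis with $C\subset\Gamma_{p,w}$ (the natural reading, so that intersecting with $\Lambda_\phi\setminus C$ makes sense inside $\Gamma_{p,w}$), and then verifying the three hypotheses required by Theorem \ref{thm:app-com}, namely that $\Gamma_{p,w}$ is a symmetric Banach function space, has the semi-Fatou property, and possesses the $H_g$ property.

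The first two points are essentially recorded in the preliminaries. Under the standing assumption $w\in D_{p}$, $(\Gamma_{p,w},\|\cdot\|_{\Gamma_{p,w}})$ is a rearrangement invariant Banach (for $p\geq 1$, resp.\ quasi-Banach for $0<p<1$) function space with the Fatou property, and the Fatou property trivially implies the semi-Fatou property. So only the $H_{g}$ property needs work.

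The heart of the argument is therefore the Kadec--Klee property for global convergence in measure in $\Gamma_{p,w}$. I would appeal to the known analogue of Corollary 1.3 of \cite{ChDSS} for $\Gamma_{p,w}$; failing a direct reference, one can argue by reducing to the maximal function. If $x_n\to x$ globally in measure and $\|x_n\|_{\Gamma_{p,w}}\to\|x\|_{\Gamma_{p,w}}$, then standard continuity properties of $(\cdot)^{**}$ under global convergence in measure give $x_n^{**}(t)\to x^{**}(t)$ for every $t>0$, hence $(x_n^{**})^{p}w\to(x^{**})^{p}w$ pointwise. The convergence of the integrals
\[
\int_0^{\alpha}(x_n^{**})^{p}w\,dt \;\longrightarrow\; \int_0^{\alpha}(x^{**})^{p}w\,dt,
\]
combined with pointwise convergence and the Fatou property, forces $\|x_n-x\|_{\Gamma_{p,w}}\to 0$ via a Brezis--Lieb style argument. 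This gives $H_{g}$ for $\Gamma_{p,w}$.

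With all three hypotheses in place, Theorem \ref{thm:app-com} applied directly to the $\mu$-compact set $C$ and the point $x\in\Gamma_{p,w}\setminus C$ produces the desired subsequence $(c_{n_k})$ and best approximant $c\in P_{C}(x)$ with $\|c_{n_k}-c\|_{\Gamma_{p,w}}\to 0$. The principal technical obstacle throughout is the $H_{g}$ property for $\Gamma_{p,w}$; once it is cited or established, the remainder of the proof is a direct quotation of the template used for $\Lambda_\phi$.
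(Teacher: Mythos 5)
Your proposal follows essentially the same route as the paper: the paper likewise reads the statement with $C\subset\Gamma_{p,w}$, obtains the $H_g$ (Kadec--Klee for global convergence in measure) property of $\Gamma_{p,w}$ by citation (Theorem 4.1 of \cite{CieKolPlu}), notes that the Fatou property gives the semi-Fatou property, and then applies Theorem \ref{thm:app-com} directly. Your additional Brezis--Lieb style sketch of the $H_g$ property is supplementary; the paper simply cites the result.
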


\begin{proof}
	Immediately, by Theorem 4.1 \cite{CieKolPlu} we obtain that $\Gamma_{p,w}$ has the Kadec Klee property for global convergence in measure. Furthermore, it is well known that $\Gamma_{p,w}$ has Fatou propert and so it has also semi-Fatou property. Hence, by Theorem \ref{thm:app-com} it follows that for any minimizing sequence $(c_n)\subset{C}$ there exist $(n_k)\subset\mathbb{N}$ and $c\in{P_C(x)}$ such that $\norm{c_{n_k}-c}{\Gamma_{p,w}}{}\rightarrow{0}$.
\end{proof}

\section{Property (S) and continuity of metric projection operator} 

First we introduce the property $(S).$

\begin{def}
\label{Kadec}
Let $(T, \Sigma, \mu)$ be a measure space.
Let $ X \subset L_o(T)$ be a Fr\'echet function space equipped with an F-norm $ \| \cdot \|.$
It is said that $X$ has property {\bf (S)} if there exists $ 0<p < \infty $ such that for any $ \{ c_n \} \subset X,$ $ c \in X$ and $f \in X,$
$$
\| f-c_n\|^p = \|f-c\|^p + \|c_n-c\|^p + o(1)
$$
provided $ c_n \rightarrow c$ locally in measure.
\end{def}

Now we present an some Banach function spaces with property $(S)$.
\begin{exam}
Let $0<\lambda<\infty$ and $\phi$ be concave increasing function on $[0,\alpha)$ with $\phi(0^+)=0$. Replacing $x$ by $f-c$ and $y_n$ by $c_n-c$ in Proposition 1.2 \cite{KPS} we get the Lorentz space $\Lambda_\phi$ with the property $(S)$ equipped with the norm given by
\begin{equation*}
\norm{x}{\Lambda_\phi}{}=\int_{0}^{\alpha}{x^*(t)\phi'(t)}dt.
\end{equation*}
\end{exam}
\begin{exam}
Let $p\geq{}1$ and $w$ be a nonegative weight function on $[0,\alpha)$ with $0<\alpha<\infty$. Taking $x=f-c$ and $y_n=c_n-c$ in the proof of Theorem 4.1 \cite{CieKolPlu} we can easily observe that the Lorentz space $\Gamma_{p,w}$ holds the property $(S)$.
\end{exam}

The next examples show some special Fr\'echet function spaces which satisfy the property $(S)$.
\begin{exam}
\label{one}
Let $(T, \Sigma, \mu)$ be a finite measure space. For $ f \in L_o(T) $ define 
$$
\| f\| = \int_T \frac{|f(t)|}{1+|f(t)|} d\mu(t).
$$
It is clear that for any $ \{ c_n \} \subset X$ and $ c \in X$, $ \| c_n - c\| \rightarrow 0$ if and only if $c_n \rightarrow c$ locally in measure.
Hence in this case property (S) is satisfied.
\end{exam}
\begin{exam}
\label{two}
Let $(T, \Sigma, \mu)$ be a $\sigma$-finite measure space. Assume that $ T = \bigcup_{n=1}^\infty T_n, $ where $ T_n \subset T_{n+1}$ and $ \mu(T_n) < \infty. $
Define for $f \in L_o(T)$ 
$$
\|f\| = \sum_{n=1}^{\infty} \frac{1}{2^n} \int_{T_n} \frac{|f(t)|}{1+|f(t)|} d\mu(t).
$$
It is clear that for any $ \{ c_n \} \subset X$ and $ c \in X$, $ \| c_n - c\| \rightarrow 0$ if and only if $c_n \rightarrow c$ locally in measure.
Hence in this case property (S) is satisfied.
\end{exam}

The next theorem shows a large class of Fr\'echet function spaces satisfying the Kadec-Klee property for local convergence in measure.
\begin{thm}
\label{FS1}
Let for $ n \in \mathbb{N}$ $(T_n, \Sigma_n, \mu_n)$ be a sequence of measure spaces such that $ \mu(T_n) < \infty $ and $ T_n \cap T_m = \emptyset $ if $ m \neq n.$ Let  $ T = \bigcup_{n=1}^{\infty} T_n$ be equipped with the measure
$$
\mu(S) = \sum_{n=1}^{\infty} \mu_n(S\cap T_n).
$$
Let 
$$ 
L_{o,n} = \{f: T_n \rightarrow \mathbb{R}, f \hbox{ is }\mu_n\hbox{-measurable}\}.
$$
Let $ X_n \subset L_{o,n}$ be a Fr\'echet function space equipped with an F-norm $ \| \cdot \|_n$ with the Fatou property. 
Define
$$
X = \bigoplus_{n=1}^{\infty} X_n \subset L_o(T)
$$
and for $ f= (f_1,f_2,...)  \in X$
$$
\| f\| = \sum_{n=1}^{\infty} \frac{\|f_n\|_n}{2^n(1+\|f_n \|_n)}.
$$
Assume that for any $n \in \mathbb{N},$ $ \| \cdot \|_n$ satisfies property $(S)$ with $p_n \in (1, +\infty).$ Then $(X, \| \cdot \|)$ satisfies the Kadec-Klee property for local convergence in measure.
\end{thm}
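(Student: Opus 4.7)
The plan is to reduce the Kadec-Klee property for $X$ to a componentwise statement on each $X_n$ and then to use property $(S)$ of each $\|\cdot\|_n$ to upgrade norm convergence into $\|\cdot\|_n$-convergence. First I would fix $\{x^{(k)}\} \subset X$ and $x \in X$ with $x^{(k)} \to x$ locally in measure on $T$ and $\|x^{(k)}\| \to \|x\|$, and write $f_n = f|_{T_n}$ for the $n$-th component of $f \in X$. Since $\mu_n(T_n) < \infty$, local convergence in measure on $T$ restricts to convergence in measure on $T_n$, so $x^{(k)}_n \to x_n$ locally in measure on $T_n$. Applying property $(S)$ of $X_n$ with $f = 0$, $c = x_n$, $c_k = x^{(k)}_n$ gives
$$
\|x^{(k)}_n\|_n^{p_n} = \|x_n\|_n^{p_n} + \|x^{(k)}_n - x_n\|_n^{p_n} + o(1),
$$
and in particular $\liminf_k \|x^{(k)}_n\|_n \geq \|x_n\|_n$ for every $n$.

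The hard part will be to promote this to $\lim_k \|x^{(k)}_n\|_n = \|x_n\|_n$ for every fixed $n$. To handle it I would introduce $\phi(t) = t/(1+t)$, continuous, strictly increasing, and bounded by $1$, so that $\|f\| = \sum_n 2^{-n} \phi(\|f_n\|_n)$. Fatou's lemma for counting measure combined with the $\liminf$ bound above yields
$$
\|x\| = \lim_k \|x^{(k)}\| \geq \sum_n 2^{-n} \liminf_k \phi(\|x^{(k)}_n\|_n) \geq \sum_n 2^{-n} \phi(\|x_n\|_n) = \|x\|,
$$
so equality must hold term by term, giving $\liminf_k \phi(\|x^{(k)}_n\|_n) = \phi(\|x_n\|_n)$ and hence $\liminf_k \|x^{(k)}_n\|_n = \|x_n\|_n$ for every $n$. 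To obtain the matching $\limsup$ bound I would argue by contradiction: if $\|x^{(k_j)}_{n_0}\|_{n_0} \to L > \|x_{n_0}\|_{n_0}$ along some subsequence, then repeating the Fatou estimate along $(k_j)$ and isolating the $n_0$-th term would produce $\|x\| \geq \sum_{n \neq n_0} 2^{-n}\phi(\|x_n\|_n) + 2^{-n_0}\phi(L) > \|x\|$, a contradiction.

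Once the componentwise norm convergence $\|x^{(k)}_n\|_n \to \|x_n\|_n$ is in hand, the property $(S)$ identity displayed in the first paragraph immediately forces $\|x^{(k)}_n - x_n\|_n^{p_n} \to 0$, hence $\|x^{(k)}_n - x_n\|_n \to 0$, for every $n$. To close the argument I would use
$$
\|x^{(k)} - x\| = \sum_n 2^{-n} \phi(\|x^{(k)}_n - x_n\|_n),
$$
observe that each summand tends to $0$ with $k$ and is dominated by $2^{-n}$, and invoke dominated convergence for series to conclude $\|x^{(k)} - x\| \to 0$, which is the desired $H_l$-property of $X$. The only delicate point is the passage from $\liminf$ to $\limsup$ for each coordinate, and the Fatou-type argument via $\phi$ seems the cleanest route; the Fatou property assumed on each $\|\cdot\|_n$ does not appear to be needed in this outline, which suggests the authors' own proof may take a different route or use the Fatou property only to guarantee well-definedness of the constructions.
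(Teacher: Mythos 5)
Your proof is correct, and its overall architecture coincides with the paper's: reduce to the components $X_n$, establish componentwise convergence of the norms, apply property $(S)$ on each component to get $\|x^{(k)}_n-x_n\|_n\to 0$, and sum up by dominated convergence for series. There are two genuine differences worth recording. First, the paper argues by contradiction and obtains the componentwise lower semicontinuity $\|f_j-c_j\|_j\le\liminf_n\|f_j-(c_n)_j\|_j$ from the Fatou property of each $\|\cdot\|_j$, after passing to a subsequence converging $\mu$-a.e.; you instead read this inequality off the property $(S)$ identity itself (with $f=0$), which is simpler, avoids the extraction of an a.e.-convergent subsequence, and confirms your closing remark that the Fatou property hypothesis is not actually needed for this theorem. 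Second, the paper passes from the componentwise $\liminf$ inequality to $\lim_n\|f_j-(c_n)_j\|_j=\|f_j-c_j\|_j$ in a single terse sentence; your Fatou-for-series argument with $\phi(t)=t/(1+t)$ followed by the $\limsup$ contradiction is precisely the detail that justifies this step (note only that if $\limsup_k\|x^{(k)}_{n_0}\|_{n_0}=+\infty$ the same contradiction obtains since $\phi\to 1$). The paper also runs the argument for an arbitrary $f\in X$ rather than $f=0$, proving slightly more than the stated $H_l$ property, but this is cosmetic.
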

\begin{proof}
Fix $ c =(c_1,c_2, ...) \in X$ $f= (f_1,f_2,...)  \in X$ and $ \{ c_n\} \subset X$ such that $ c_n \rightarrow c$ locally in measure. Assume to the contrary that $ \| f-c_n \| \rightarrow \| f-c\| $ and $ \| c_n -c\|\nrightarrow{0}$. Passing to a subsequence and relabelling if necessary, we can assume
\begin{equation}
\label{d} 
\| c_n -c\| > d>0
\end{equation}
for any $ n \in \mathbb{N}.$ Since for any $j \in \mathbb{N},$ $ \mu(T_j) < \infty,$ again passing to a convergent subsequence, if necessary, and applying diagonal argument we can assume that $ c_n(t) \rightarrow c(t)$ $\mu$-a.e. Hence, $ |f(t) -c_n(t)| \rightarrow |f(t)-c(t)|$  $\mu$-a.e. By the Fatou property, for any $j \in \mathbb{N}$ we get
$$ 
\| f_j - (c)_j\|_j \leq \liminf_n \| f_j - (c_n)_j\|_j.
$$
Since $ \| f-c_n\| \rightarrow \|f-c\|,$ by the above inequality, for any  $j \in \mathbb{N},$ 
$$
\lim_n \| f_j - (c_n)_j\|_j =\| f_j - c_j\|_j.
$$
By property (S) applied to $ \| \cdot \|_j,$ $ (c_n)_j , c_j$ and $ f_j$ we get 
$$ 
\| (c_n)_j - c_j\|_j \rightarrow_n 0.
$$
Consequently $ \| c_n - c\| \rightarrow_n 0,$ which is a contradiction with (\ref{d}).
\end{proof}
Now we apply the Kadec-Klee property for local convergence in measure to the problem of continuity of the metric projection operator.
\begin{def}
\label{projop}
Let $ X$ be a Fr\'echet space and let $ C\subset X$ be a proximinal subset of $X.$ Let for $ x \in X, $ 
$$
P_C(x) = \{ c \in C: \| x-c\| = dist(x,C)\}.
$$
The metric projection operator is a mapping $P$ from $X$ to $ 2^C$ defined by 
$ Pf = P_C(f).$ 
\end{def}
\begin{thm}
\label{cont1}
Let $\mu$ be a $\sigma$-finite measure and let $(X,\| \cdot \|) \subset L_o(T) = L_o(T, \Sigma, \mu)$ be a Fr\'echet function space with the Fatou property and $X\in(H_l).$ Let $C \subset X$ be a proximinal subset of $X.$ Assume for any $g \in X$ and $ c_n \subset C$ if $ \| g-c_n\| \rightarrow  dist(g,C)$ then there exists $c \in C$ and a subsequence $ \{ c_{n_k}\}$ such that $ c_{n_k} \rightarrow c$ locally in measure. Let $ \{ f_n\} \subset X$, $f \in X$ and $ \| f_n-f\| \rightarrow 0.$  Then
$$
\lim_n (\sup\{ dist(c, P_C(f)): c \in P_C(f_n)\}) =0.
$$
\end{thm}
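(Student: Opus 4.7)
The natural approach is proof by contradiction: if the conclusion failed, I could pass to a subsequence (relabelled as the whole sequence) and find $\varepsilon>0$ together with $c_n\in P_C(f_n)$ such that $\dist(c_n,P_C(f))>\varepsilon$ for every $n$. The plan is to show that such $\{c_n\}$ must nonetheless admit a sub-subsequence converging in norm to some point of $P_C(f)$, which would immediately contradict the separation.

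First I would observe that, because $\|\cdot\|$ is an $F$-norm, the distance function $x\mapsto \dist(x,C)$ is continuous, so $\dist(f_n,C)\to\dist(f,C)$. Combined with $\|f_n-c_n\|=\dist(f_n,C)$ and the triangle inequality $\|f-c_n\|\leq \|f-f_n\|+\|f_n-c_n\|$, this yields $\|f-c_n\|\to\dist(f,C)$, i.e.\ $\{c_n\}$ is a minimizing sequence for $f$. The hypothesis of the theorem then supplies a subsequence $\{c_{n_k}\}$ and some $c\in C$ with $c_{n_k}\to c$ locally in measure; hence also $f-c_{n_k}\to f-c$ locally in measure.

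The next step is to identify $c$ as a best approximant to $f$. Because $\mu$ is $\sigma$-finite, after a further diagonal extraction I may assume $c_{n_k}\to c$ $\mu$-a.e., so $|f-c_{n_k}|\to|f-c|$ $\mu$-a.e. Imitating the argument used in Theorem \ref{proxym}, I set $g_k=\inf_{l\geq k}|f-c_{n_l}|$. Then $g_k\uparrow |f-c|$ $\mu$-a.e.\ and $\|g_k\|\leq \|f-c_{n_k}\|$; the Fatou property gives $\|g_k\|\uparrow\|f-c\|$, whence
\[
\|f-c\|=\lim_k\|g_k\|\leq\lim_k\|f-c_{n_k}\|=\dist(f,C),
\]
so $c\in P_C(f)$ and simultaneously $\|f-c_{n_k}\|\to\|f-c\|$.

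Finally, I would invoke the $H_l$ property applied to the sequence $f-c_{n_k}$: since $f-c_{n_k}\to f-c$ locally in measure and the norms converge, we get $\|(f-c_{n_k})-(f-c)\|=\|c-c_{n_k}\|\to 0$. Therefore $\dist(c_{n_k},P_C(f))\leq\|c_{n_k}-c\|\to 0$, contradicting $\dist(c_{n_k},P_C(f))>\varepsilon$. The main subtlety I expect is step three, namely promoting local-in-measure convergence of $\{c_{n_k}\}$ to the equality $\|f-c\|=\dist(f,C)$ without assuming order continuity: this is exactly where the Fatou property together with the $\sigma$-finiteness of $\mu$ (to pass from convergence in measure to $\mu$-a.e.\ on each $T_n$ via diagonalisation) is indispensable, while everything else is a clean combination of continuity of the distance and the $H_l$ hypothesis.
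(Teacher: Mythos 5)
Your proposal is correct and follows essentially the same route as the paper's proof: contradiction via a separated sequence $c_n\in P_C(f_n)$, the triangle inequality to show $\|f-c_n\|\to\dist(f,C)$, the hypothesis plus $\sigma$-finiteness and diagonalisation to get a.e.\ convergence to some $c\in C$, the Fatou property (which the paper invokes directly as lower semicontinuity, while you unfold it via $g_k=\inf_{l\geq k}|f-c_{n_l}|$ exactly as in Theorem \ref{proxym}) to conclude $c\in P_C(f)$ with $\|f-c_{n_k}\|\to\|f-c\|$, and finally the $H_l$ property to derive the contradiction. No gaps.
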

\begin{proof}
Assume to the contrary that there exist $ \{ f_n\} \subset X$ and $f \in X$ such that $ \| f_n-f\| \rightarrow 0$ and 
$$
\limsup_n (\sup\{ dist(c, P_C(f)): c \in P_C(f_n)\}) >0.
$$
Passing to a subsequence if necessary, we can assume that 
$$
\sup\{ dist(c, P_C(f)): c \in P_C(f_n)\} > d >0
$$ 
for any $n \in \mathbb{N}.$ For any $ n \in \mathbb{N}$ select $ c_n \in P_C(f_n)$ such that
$$
dist(c_n, P_C(f)) > \frac{d}{2}.
$$  
Notice that
$$
dist(f,C) \leq \| f-c_n\| \leq (dist(f_n,C) + \|f_n-f\|)  \rightarrow_n dist(f,C).
$$  
Hence, since $ \| f_n-f\| \rightarrow 0,$ $ \| f-c_n\| \rightarrow dist(f,C).$ By our assumptions there exists a subsequence $ \{ n_k\}$ and $ c \in C,$ such that $ c_{n_k} \rightarrow c$ locally in measure. Since $ \mu$ is $\sigma$-finite, applying the diagonal argument and passing to a convergent subsequence, if necessary we can assume that $ c_{n_k}(t) \rightarrow c(t)$ $\mu$-a.e.. By the Fatou property, 
\begin{align*}
\|f-c\| &\leq \liminf_{k\rightarrow\infty} \| f -c_{n_k}\| \leq \liminf_{k\rightarrow\infty} \| f-f_{n_k}\| + \lim_{k\rightarrow\infty} dist(f_{n_k},C)\\
&= dist(f,C) \leq \|f-c\|.
\end{align*} 
Consequently, $ c \in P_C(f)$ and $ \|f-c_{n_k} \| \rightarrow \| f-c\|.$  
By assumption that $X\in(H_l)$, it follows that $ \| c_{n_k} -c\| \rightarrow 0.$ Hence,
$$
\frac{d}{2} < dist(c_{n_k}, P_C(f)) \leq \| c_{n_k} - c\| \rightarrow 0,
$$
which is a contradiction.
\end{proof}
\begin{thm}
\label{cont2}
Let $X$, $ \{f_n\}\subset X,$ $f \in X$ and $C \subset X$ be such as in Theorem \ref{cont1}. 
Assume that $ P_C(f)$ is a singleton. Then 
$$
\lim_n d_H (P_C(f), P_C(f_n)) =0,
$$
where $d_H$ denote the Hausdorff distance between $ P_C(f)$ and $P_{C}(f_n).$  
\end{thm}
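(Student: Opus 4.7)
The plan is to reduce everything to Theorem \ref{cont1} by exploiting the singleton hypothesis. Recall that for nonempty subsets $A,B$ of a metric space,
$$
d_H(A,B) = \max\!\Bigl\{\sup_{a\in A}\dist(a,B),\ \sup_{b\in B}\dist(b,A)\Bigr\}.
$$
Write $P_C(f)=\{c_0\}$. Then the two quantities inside the maximum become
$$
\sup_{a\in P_C(f)}\dist(a,P_C(f_n)) = \dist(c_0,P_C(f_n)),
\qquad
\sup_{b\in P_C(f_n)}\dist(b,P_C(f)) = \sup_{b\in P_C(f_n)}\|b-c_0\|.
$$

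For the second supremum, Theorem \ref{cont1} applied to the hypotheses already in force gives
$$
\lim_{n} \sup\{\dist(c,P_C(f)) : c\in P_C(f_n)\} = 0,
$$
which, since $P_C(f)=\{c_0\}$, is exactly $\sup_{b\in P_C(f_n)}\|b-c_0\|\to 0$.

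For the first quantity, note that $C$ is proximinal, so $P_C(f_n)\neq\emptyset$ for every $n$. Picking any $b_n\in P_C(f_n)$, I get
$$
\dist(c_0,P_C(f_n)) \leq \|c_0-b_n\| \leq \sup_{b\in P_C(f_n)}\|b-c_0\|,
$$
and the right-hand side tends to $0$ by the previous step. Hence $\dist(c_0,P_C(f_n))\to 0$ as well.

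Combining both bounds yields $d_H(P_C(f),P_C(f_n))\to 0$, as required. There is no real obstacle here: the singleton hypothesis collapses the half of the Hausdorff distance that is not directly handled by Theorem \ref{cont1}, making the argument essentially a bookkeeping exercise based on the previous theorem.
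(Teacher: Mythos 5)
Your proof is correct, and it takes a genuinely shorter route than the paper's. The key observation you make is that once $P_C(f)=\{c_0\}$, the half of the Hausdorff distance not covered by Theorem \ref{cont1} is automatically dominated by the half that is: since $P_C(f_n)\neq\emptyset$ by proximinality, $\dist(c_0,P_C(f_n))=\inf_{b\in P_C(f_n)}\|c_0-b\|\leq\sup_{b\in P_C(f_n)}\|b-c_0\|=\sup_{b\in P_C(f_n)}\dist(b,P_C(f))\to 0$, so Theorem \ref{cont1} is used purely as a black box together with elementary properties of $\inf$ and $\sup$. The paper instead proves $\dist(c_0,P_C(f_n))\to 0$ by contradiction: it selects $c_{n_k}\in P_C(f_{n_k})$ with $\|c_{n_k}-c_0\|>d/2$, shows $\|f-c_{n_k}\|\to\dist(f,C)$, and then reruns the sequential-compactness, Fatou, and Kadec--Klee machinery from the proof of Theorem \ref{cont1} to extract a subsequence converging in $F$-norm to some $u\in P_C(f)$, forcing $u=c_0$ and a contradiction. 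Your argument buys simplicity and shows that the singleton hypothesis alone (plus nonemptiness of each $P_C(f_n)$) collapses the second half of the Hausdorff distance, with no further appeal to the structure of $X$; the paper's longer argument does not yield anything extra here, so your version is a clean improvement in exposition. The only hypotheses you rely on --- proximinality of $C$ (for $P_C(f_n)\neq\emptyset$) and the symmetry $\|x\|=\|-x\|$ of the $F$-norm --- are both in force, so there is no gap.
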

\begin{proof}
By Theorem \ref{cont1}
$$
\lim_n (\sup\{ dist(d, P_C(f)): d \in P_C(f_n)\}) =0.
$$
We need to show that 
$$
\lim_n (\sup\{ dist(c, P_{C}(f_n)): c \in P_C(f)\}) =0.
$$
By our assumption $ P_C(f) = \{ c \}.$ Hence, the above inequality reduces to 
 $$
\lim_{n\rightarrow\infty} dist(c, P_{C}(f_n))=0.
$$
Assume that this is not true. Reasoning as in Theorem \ref{cont1} we can choose a subsequence $\{ c_{n_k} \} \subset C$ such that $ c_{n_k} \in P_C(f_{n_k})$ and $d>0$ such that for any $k\in\mathbb{N}$,
$$
\| c_{n_k} - c\| > \frac{d}{2}. 
$$
Notice that
$$
dist(f,C) \leq \|f-c_{n_k}\| \leq  dist(f_{n_k},C) + \| f- f_{n_k}\| .
$$
Hence,
$$
\| f-c_{n_k}\| \rightarrow dist(f,C).
$$
Reasoning as in Theorem \ref{cont1} passing to a convergent subsequence if necessary we can assume that $ \| c_{n_k} - u\| \rightarrow 0$ for some $ u \in C.$ 
It is clear that $ u \in P_C(f).$ Since $  P_C(f) = \{ c \},$ $u=c,$ which leads to a contradiction.
\end{proof}
\begin{exam}
Let $X = \mathbb{R}$ and let $ C= \{ -1,1\}.$ Observe that $ P_C(0) = C$ and $ P_C(\frac{1}{n}) = \{1 \}.$ 
Hence 
$$
\sup\{ dist(c, P_{C}(\frac{1}{n})): c \in P_C(0)\} = |-1-1| =2,
$$
which shows that the assumption that $ P_C(f)$ is a singleton in Theorem \ref{cont2} is essential.
\end{exam}
As an immediate consequence of Theorem \ref{cont2} we get the following result.
\begin{thm}
\label{cont3}
Let $X$, $\{f_n\}\subset X,$ $f \in X$ and $C \subset X$ be such as in Theorem \ref{cont1}. 
Assume that for any $g \in X$ $ P_C(g)$ is a singleton. Then the mapping $ g \rightarrow P_C(g) \in C$ is continuous. 
\end{thm}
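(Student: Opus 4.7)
The plan is to derive the continuity of $g \mapsto P_C(g)$ as a direct corollary of Theorem \ref{cont2}. Since $P_C(g)$ is always a singleton by hypothesis, I will identify the set-valued map with a single-valued map from $X$ to $C$, sending each $g$ to the unique element of $P_C(g)$. Continuity then means: for every $f \in X$ and every sequence $\{f_n\} \subset X$ with $\|f_n - f\| \to 0$, the unique best approximants satisfy $\|P_C(f_n) - P_C(f)\| \to 0$.

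The key step is to observe that when $A = \{a\}$ and $B = \{b\}$ are singletons, the Hausdorff distance reduces to $d_H(A,B) = \|a - b\|$, since both $\sup_{x \in A} \dist(x,B)$ and $\sup_{y \in B} \dist(y,A)$ collapse to $\|a-b\|$. Consequently, the convergence $d_H(P_C(f_n), P_C(f)) \to 0$ furnished by Theorem \ref{cont2} is exactly norm convergence of the unique best approximants.

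Putting it together, I fix $f \in X$ and any sequence $\{f_n\} \subset X$ with $\|f_n - f\| \to 0$. Write $P_C(f) = \{c\}$ and $P_C(f_n) = \{c_n\}$. Since all hypotheses of Theorem \ref{cont1} are assumed (inherited from Theorem \ref{cont1}) and $P_C(f)$ is a singleton by the extra assumption, Theorem \ref{cont2} yields
\[
\|c_n - c\| \;=\; d_H(P_C(f_n), P_C(f)) \longrightarrow 0,
\]
which is the required continuity of $g \mapsto P_C(g)$ at $f$. Since $f \in X$ was arbitrary, the map is continuous on $X$.

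No genuine obstacle is expected; the only item to verify with care is the reduction of the Hausdorff distance between singletons to the norm distance, which is immediate. All heavy lifting (extraction of an a.e.\ convergent subsequence via the $\sigma$-finite structure, Fatou property, the $H_l$ property, and the uniqueness of the limit in $P_C(f)$) has already been carried out in Theorems \ref{cont1} and \ref{cont2}.
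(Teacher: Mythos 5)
Your proposal is correct and follows exactly the route the paper intends: the paper derives Theorem \ref{cont3} as an immediate consequence of Theorem \ref{cont2}, and your observation that the Hausdorff distance between singletons reduces to the norm distance is precisely the (trivial) bridge needed.
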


Now we present in view of  Theorem \ref{cont1} the full criteria of the continuity of the metric projection in Fr\'echet function space with the Kadec-Klee property for local convergence in measure with respect to the Hausdorff distance.

\begin{thm}
\label{cont4}
Let $X$ , $ \{f_n\}\subset X,$ $f \in X$ and $C \subset X$ be such as in Theorem \ref{cont1}. 
Then  
$$
\lim_n d_H (P_C(f), P_C(f_n)) =0,
$$
if and only if for any $\epsilon>0$ there exists $\delta_\epsilon>0$ such that for any $c\in P_C(f)$ if $ \| f-f_n\| <\delta_\epsilon$ then $ P_C(f_n) \cap B(c,\epsilon) \neq \emptyset $, where $ B(c,\epsilon) = \{ x \in X: \|x-c\| \leq\epsilon\}.$
\end{thm}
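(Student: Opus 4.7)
The plan is to exploit the definition of the Hausdorff distance
\begin{equation*}
d_H(P_C(f), P_C(f_n)) = \max\left\{ \sup_{c\in P_C(f)} \dist(c, P_C(f_n)), \; \sup_{d\in P_C(f_n)} \dist(d, P_C(f)) \right\},
\end{equation*}
so that $d_H(P_C(f), P_C(f_n)) \to 0$ is equivalent to both one-sided suprema tending to zero. The assumptions on $X, C, f$ and $\{f_n\}$ are precisely those of Theorem \ref{cont1}, which already gives for free that $\sup_{d\in P_C(f_n)} \dist(d, P_C(f)) \to 0$. Hence the entire equivalence I must prove reduces to
\begin{equation*}
\sup_{c\in P_C(f)} \dist(c, P_C(f_n)) \to 0 \iff \text{the stated condition},
\end{equation*}
after noting that the set-theoretic statement $P_C(f_n) \cap B(c, \epsilon) \neq \emptyset$ is exactly the metric statement $\dist(c, P_C(f_n)) \leq \epsilon$.

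For the implication $(\Leftarrow)$ I will fix $\epsilon > 0$, obtain $\delta_\epsilon$ from the hypothesis, use $\norm{f-f_n}{}{} \to 0$ to choose $N$ with $\norm{f-f_n}{}{} < \delta_\epsilon$ for all $n \geq N$, and conclude that $\dist(c, P_C(f_n)) \leq \epsilon$ \emph{for every} $c \in P_C(f)$ whenever $n \geq N$. Taking the supremum over $c \in P_C(f)$ and then letting $\epsilon \to 0^+$ yields the desired convergence.

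For the converse $(\Rightarrow)$ I will argue by contradiction. If the condition failed, there would exist $\epsilon_0 > 0$ such that for every $k \in \mathbb{N}$ one could find an index $n_k$ with $\norm{f-f_{n_k}}{}{} < 1/k$ together with some $c_k \in P_C(f)$ satisfying $P_C(f_{n_k}) \cap B(c_k, \epsilon_0) = \emptyset$, i.e.\ $\dist(c_k, P_C(f_{n_k})) > \epsilon_0$. Because $\norm{f-f_n}{}{}\to 0$, infinitely many indices satisfy $\norm{f-f_n}{}{} < 1/k$, so the $n_k$ may be taken strictly increasing; then $\{f_{n_k}\}$ is a subsequence of the original, and
\begin{equation*}
\sup_{c \in P_C(f)} \dist(c, P_C(f_{n_k})) \geq \dist(c_k, P_C(f_{n_k})) > \epsilon_0
\end{equation*}
for every $k$, contradicting the assumed $d_H(P_C(f), P_C(f_{n_k})) \to 0$.

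I expect no substantive obstacle beyond unwinding the quantifiers and invoking Theorem \ref{cont1}; the principal insight is that one of the two one-sided Hausdorff distances is automatic from the earlier result, while the other is precisely what the stated condition encodes, so the proof is essentially a bookkeeping exercise.
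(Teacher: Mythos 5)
Your proposal is correct and follows essentially the same route as the paper's: both reduce the equivalence to the single one-sided supremum $\sup_{c\in P_C(f)}\dist(c,P_C(f_n))$ by invoking Theorem \ref{cont1} for the other half of $d_H$, and the forward implication is the identical $\epsilon$--$\delta$ bookkeeping, your converse being merely the contrapositive of the paper's direct argument. The only loose point is your justification for extracting strictly increasing $n_k$ (the correct reason is that a bounded sequence of witnesses would force some $f_{n^*}=f$, for which $c\in P_C(f_{n^*})\cap B(c,\epsilon_0)$ makes the bad property impossible), but this quantifier detail is no less precise than the corresponding step in the paper's own proof.
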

\begin{proof}
Let $\epsilon>0$. Then there exists $\delta_\epsilon>0$ such that for any $c\in P_C(f)$ we have $dist(c,P_C(f_n))\leq\epsilon$ whenever $ \| f-f_n\| <\delta_\epsilon$. Since $\|f-f_n\|\rightarrow{0}$, there exists $n_0\in\mathbb{N}$ such that for any $n\geq{n_0}$ we get $\|f-f_n\|<\delta_\epsilon$ and consequently $dist(c,P_C(f_n))\leq\epsilon$ for all $c\in{P_C(f)}$. Therefore, for all $n\geq{n_0}$ we get
\begin{equation*}
\sup\{dist(c,P_C(f_n)):c\in P_C(f)\}\leq\epsilon.
\end{equation*}
Thus, by Theorem \ref{cont1} we conclude that
\begin{equation*}
\lim_{n\rightarrow\infty}d_H(P_C(f),P_C(f_n))=0.
\end{equation*}
The converse implication follows immediately by definition of the Hausdorff distance. Indeed, we have
\begin{equation*}
\lim_{n\rightarrow\infty}\sup\{dist(c,P_C(f_n)):c\in P_C(f)\}=0.
\end{equation*}
Now, taking $\epsilon>0$ we may find $n_\epsilon\in\mathbb{N}$ such that for any $n\geq{n_\epsilon}$ and $c\in{P_C(f)}$ we get
\begin{equation*}
dist(c,P_C(f_n))\leq\epsilon.
\end{equation*}
Consequently, by assumption that $\norm{f-f_n}{}{}\rightarrow{0}$ there exists $\delta_\epsilon>0$ such that if $\norm{f-f_n}{}{}<\delta_\epsilon$ then $n\geq{n_\epsilon}$ and for all $c\in{P_C(f)}$ we obtain
\begin{equation*}
B(c,\epsilon)\cap{}P_C(f_n))\neq\emptyset.
\end{equation*}
\end{proof}

\section{Properties of metric projection onto one-dimensional subspaces of sequence Lorentz spaces}

First we recall a well-known result for a sake of completeness and convenience of the reader. 
\begin{thm}
\label{classical}
Let $X$ be a Banach space, $ x \in X$ and let $ Y \subset X$ be a linear subspace. Assume that $dist(x,Y) >0.$ Then $y \in P_Y(x)$ if and only if there exists $ f \in S_{X^*}$ such that $f(x-y) = dist(x,Y)$ and $f(y) =0.$
\end{thm}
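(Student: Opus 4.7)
The plan is to use the Hahn--Banach theorem for the forward direction and a direct duality argument for the reverse. Both implications revolve around producing (respectively, using) a norming functional for $x-y$ that annihilates $Y$, so I read the condition ``$f(y) = 0$'' as the annihilation $f|_Y \equiv 0$, which together with $f(x-y) = \dist(x,Y)$ characterizes optimality.

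For the necessity direction ($\Rightarrow$), I assume $y \in P_Y(x)$, so $\|x-y\| = \dist(x,Y) > 0$ and, in particular, $x - y \notin Y$. I set $Z := Y + \mathbb{R}(x-y)$ (an algebraic direct sum by the previous observation) and define $f_0 \in Z^*$ by
\[
f_0\bigl(u + t(x-y)\bigr) := t\,\|x-y\|, \qquad u \in Y,\ t \in \mathbb{R}.
\]
Then $f_0|_Y \equiv 0$ and $f_0(x-y) = \|x-y\| = \dist(x,Y)$. The key step is the norm identity $\|f_0\|_{Z^*} = 1$: for $t \neq 0$, since $y - u/t \in Y$,
\[
\|u + t(x-y)\| = |t|\cdot \|x - (y - u/t)\| \geq |t|\dist(x,Y),
\]
yielding $\|f_0\|_{Z^*} \leq 1$, while the reverse bound comes from $|f_0(x-y)| = \|x-y\|$. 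The Hahn--Banach theorem then extends $f_0$ to $f \in X^*$ with $\|f\| = 1$, producing $f \in S_{X^*}$ with the required properties.

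For the sufficiency direction ($\Leftarrow$), suppose $f \in S_{X^*}$ satisfies $f|_Y \equiv 0$ and $f(x-y) = \dist(x,Y)$. For any $y' \in Y$, using $y - y' \in Y$ and hence $f(y - y') = 0$,
\[
\|x - y'\| \geq f(x-y') = f(x-y) + f(y-y') = \dist(x,Y),
\]
so $\|x - y'\| \geq \dist(x,Y)$ for every $y' \in Y$, with the infimum realized at $y$ via the norming identification $f(x-y) = \|x-y\|$ forced at the optimum. Thus $\|x-y\| = \dist(x,Y)$, i.e., $y \in P_Y(x)$.

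The principal obstacle is the norm identity $\|f_0\|_{Z^*} = 1$ in the forward direction, which is precisely where the best-approximation property of $y$ is indispensable; the only subtlety in the reverse direction is reading $f(y) = 0$ as the full annihilation $f|_Y \equiv 0$, without which the duality estimate would fail to close.
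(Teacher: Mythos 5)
The paper offers no proof of this statement at all --- it is recalled as a ``well-known result'' --- so the only question is whether your argument is correct. Your necessity direction is fine and is the standard one: the decomposition $Z=Y\oplus\mathbb{R}(x-y)$, the functional $f_0(u+t(x-y))=t\|x-y\|$, the estimate $\|u+t(x-y)\|=|t|\,\|x-(y-u/t)\|\ge |t|\dist(x,Y)$ giving $\|f_0\|_{Z^*}=1$, and Hahn--Banach.

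The sufficiency direction, however, has a genuine gap. From $f|_Y\equiv 0$ and $f(x-y)=\dist(x,Y)$ you derive $\|x-y'\|\ge f(x-y')=\dist(x,Y)$ for all $y'\in Y$; but this inequality is true by the definition of $\dist(x,Y)$ and carries no information about $y$. What you must show is $\|x-y\|\le\dist(x,Y)$, and your phrase that the identity $f(x-y)=\|x-y\|$ is ``forced at the optimum'' is circular: $y$ being optimal is precisely the conclusion. In fact the implication is false with the hypotheses as you (and the paper) literally state them. Take $X=\mathbb{R}^2$ with the Euclidean norm, $Y=\spa\{(1,0)\}$, $x=(0,1)$, $y=(1,0)$ and $f=(0,1)$: then $f\in S_{X^*}$, $f|_Y\equiv 0$ and $f(x-y)=1=\dist(x,Y)$, yet $\|x-y\|=\sqrt2>1$, so $y\notin P_Y(x)$. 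The classical characterization (e.g.\ Singer) requires $f(x-y)=\|x-y\|$ rather than $f(x-y)=\dist(x,Y)$ --- and this is in fact how the paper uses Theorem~\ref{classical} later, e.g.\ in the proof of Lemma~\ref{selection}. With that hypothesis the argument closes in one line: $\|x-y\|=f(x-y)=f(x-y')\le\|x-y'\|$ for every $y'\in Y$. You should either prove the corrected statement or point out that the version with $\dist(x,Y)$ admits the above counterexample.
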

If $Y$ is a finite-dimensional subspace of a real Banach space $X$ a stronger version of Theorem \ref{classical} holds.
\begin{thm}
\label{singer}
(see \cite{Singer} , p.170) Let $Y$ be an n-dimensional subspace of a real Banach space $ X$ an let $ x \in X \setminus Y.$ Then $y \in  P_Y(x)$ if and only if there exists $ j \in \{ 1,...,n+1\}, $ $ f_1,...,f_j \in ext(S(X^*)),$
positive numbers $ \lambda_1,...,\lambda_j$ such that for $i\in\{1,...,j\}$,
$$
f_i(x-y) = \| x-y\|
$$
and 
$$
\sum_{i=1}^j (\lambda_i f^i)_Y =0.
$$
\end{thm}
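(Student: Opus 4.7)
The plan is to prove the two implications by very different routes: sufficiency is a one-line duality computation, while necessity combines Theorem \ref{classical} with a Krein--Milman/Carath\'eodory reduction carried out in the finite-dimensional space $Y^*$.

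For the sufficiency, suppose the $f_i$ and $\lambda_i$ with the stated properties exist, and set $\Lambda = \sum_{i=1}^j \lambda_i > 0$. For any $z \in Y$, I would estimate
\[
\Lambda \|x-z\| \geq \sum_{i=1}^j \lambda_i f_i(x-z) = \sum_{i=1}^j \lambda_i f_i(x-y) + \sum_{i=1}^j \lambda_i f_i(y-z).
\]
The second sum vanishes because $y - z \in Y$ and $\sum_i \lambda_i f_i$ annihilates $Y$, while the first equals $\Lambda \|x-y\|$ by the norming condition $f_i(x-y) = \|x-y\|$. Dividing by $\Lambda$ yields $\|x-z\| \geq \|x-y\|$, so $y \in P_Y(x)$.

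For the necessity, let $F = \{f \in B_{X^*} : f(x-y) = \|x-y\|\}$. By Hahn--Banach this set is nonempty, and it is clearly weak-$*$ compact and convex; moreover it is an exposed, hence extremal, face of $B_{X^*}$, so every extreme point of $F$ is an extreme point of $B_{X^*}$ (and therefore lies on $S_{X^*}$ since $x - y \neq 0$). Theorem \ref{classical} applied to the subspace $Y$ delivers some $g \in F$ with $g|_Y = 0$. Krein--Milman then gives $g \in \overline{\mathrm{conv}}^{w^*}(\mathrm{ext}(F))$. Now consider the restriction map $\rho : X^* \to Y^*$, which is weak-$*$-to-norm continuous. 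Since $\dim Y^* = n < \infty$, the image $\rho(F)$ is a compact convex subset of $Y^*$, and in finite dimensions the closed convex hull of a compact set coincides with its ordinary convex hull, so $\rho(F) = \mathrm{conv}(\rho(\mathrm{ext}(F)))$. In particular $0 = \rho(g) \in \mathrm{conv}(\rho(\mathrm{ext}(F)))$, and Carath\'eodory's theorem in the $n$-dimensional space $Y^*$ produces at most $n+1$ functionals $f_1, \dots, f_j \in \mathrm{ext}(F)$ and nonnegative weights $\mu_i$, not all zero, with $\sum_i \mu_i\, f_i|_Y = 0$. Discarding those indices with $\mu_i = 0$ and relabelling with $\lambda_i = \mu_i > 0$ produces the required representation, with $j \leq n+1$.

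The main obstacle is the step from ``$0$ lies in the weak-$*$ closed convex hull of $\rho(\mathrm{ext}(F))$'' to ``$0$ lies in the ordinary convex hull of finitely many such points''; this is precisely where the hypothesis $\dim Y = n < \infty$ is essential (in infinite dimensions one would need a Choquet-type integral representation instead of a finite convex combination). Everything else --- Hahn--Banach to produce norming functionals, the fact that exposed faces inherit extreme points from $B_{X^*}$, and the Carath\'eodory bound $j \leq n+1$ --- is routine once that reduction has been made.
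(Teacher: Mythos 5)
The paper does not actually prove this theorem --- it is quoted from Singer's monograph, p.~170 --- so there is no internal argument to compare yours against. What you have written is essentially the classical proof, and its architecture (a one-line duality estimate for sufficiency; Krein--Milman plus Carath\'eodory in $Y^*$ for necessity) is the right one. The sufficiency half is complete and correct, and the reduction of necessity to the statement $0\in\mathrm{conv}(\rho(\mathrm{ext}(F)))$ in the $n$-dimensional space $Y^*$ is exactly where the hypothesis $\dim Y=n$ enters, as you say.

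There is, however, one step in the necessity half whose justification does not hold as stated. You claim $\rho(F)=\mathrm{conv}(\rho(\mathrm{ext}(F)))$ because ``in finite dimensions the closed convex hull of a compact set coincides with its ordinary convex hull.'' That identity requires the set being hulled, namely $\rho(\mathrm{ext}(F))$, to be compact, and for that you would need $\mathrm{ext}(F)$ to be weak-$*$ closed --- which fails in general: already for compact convex sets in $\mathbb{R}^3$ the extreme points need not form a closed set. What Krein--Milman and the continuity of $\rho$ actually give is $\rho(F)=\overline{\mathrm{conv}}\bigl(\rho(\mathrm{ext}(F))\bigr)$, and Carath\'eodory applied to that only produces points in the closure of $\rho(\mathrm{ext}(F))$, which need not be restrictions of extreme functionals. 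The standard repair: since $\rho(F)$ is compact and convex in $Y^*\cong\mathbb{R}^n$, Minkowski's theorem gives $\rho(F)=\mathrm{conv}(\mathrm{ext}(\rho(F)))$ with no closure, and every $e\in\mathrm{ext}(\rho(F))$ lifts to an extreme point of $F$, because $\rho^{-1}(e)\cap F$ is a nonempty weak-$*$ compact face of $F$, hence possesses an extreme point, which is then extreme in $F$ and therefore in $B_{X^*}$. With that substitution your Carath\'eodory step goes through verbatim and yields $j\le n+1$ genuinely extreme norming functionals whose restrictions to $Y$ admit a vanishing positive combination.
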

By (\cite{SudWoj}, Th. 5.8) we can immediately deduce
\begin{thm}
\label{sudwoj}
Let $Y$ be an n-dimensional subspace of a real Banach space $ X$ an let $ x \in X \setminus Y$ and $y_o \in P_Y(x).$ If the minimal number $j$ satisfying the requirements of Theorem \ref{singer} is equal to 
$ n+1$ then there there exists $ r > 0$ such that for any $y \in Y$
$$ 
\| x -y \| \geq \|x-y_o\| + r\|y -y_o\|.
$$
By the Freud Theorem (see \cite{Chen}, p. 82) if $x_n \in X$ and $ \|x_n-x\| \rightarrow 0,$ and $ y_n \in P_{Y}(x_n),$ then 
$$
\| y_n -y\| \leq \frac{2\|x_n-x\|}{r}
$$
\end{thm}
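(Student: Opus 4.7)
The plan is to establish the result in two stages. First, I would derive the strong uniqueness inequality $\|x-y\| \geq \|x-y_o\| + r\|y-y_o\|$ from the Singer characterization under the minimality hypothesis $j=n+1$. Second, I would combine this inequality with the triangle inequality applied to $x,x_n,y_n$ to obtain the quantitative stability estimate; this second step is the Freud argument and is entirely routine once strong uniqueness is in hand.

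For the first stage, fix $f_1,\dots,f_{n+1}\in ext(S_{X^*})$ and positive $\lambda_1,\dots,\lambda_{n+1}$ satisfying $f_i(x-y_o)=\|x-y_o\|$ and $\sum_{i=1}^{n+1}\lambda_i f_i|_Y=0$. For any $y\in Y$, the identity $f_i(x-y)=\|x-y_o\|-f_i(y-y_o)$ and the bound $\|x-y\|\geq f_i(x-y)$ yield
$$
\|x-y\|-\|x-y_o\|\;\geq\;\max_{1\leq i\leq n+1}\bigl(-f_i(y-y_o)\bigr)\;=:\;\phi(y-y_o).
$$
The sublinear functional $\phi$ on $Y$ is nonnegative, since $\phi(z)<0$ would force $f_i(z)>0$ for all $i$, contradicting $\sum\lambda_i f_i(z)=0$. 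The crucial step is to show $\phi(z)>0$ for every $z\in Y\setminus\{0\}$, i.e. $Y\cap\bigcap_{i=1}^{n+1}\ker f_i=\{0\}$. If this intersection were nontrivial, the restrictions $f_i|_Y$ would span a subspace of $Y^*$ of dimension at most $n-1$, producing a second linear relation $\sum\mu_i f_i|_Y=0$ not proportional to $(\lambda_i)$. The affine family $\sum(\lambda_i+t\mu_i)f_i|_Y=0$ would then, at the first $t_0$ at which some coefficient $\lambda_i+t\mu_i$ vanishes, give a representation with nonnegative coefficients and at most $n$ nonzero terms, violating the minimality of $j=n+1$. With $\phi$ strictly positive on the unit sphere of $Y$, finite-dimensionality gives $r:=\min\{\phi(z):z\in Y,\|z\|=1\}>0$, and homogeneity promotes this to $\phi(z)\geq r\|z\|$ for all $z\in Y$, which is the desired inequality.

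For the second stage, I would apply the strong uniqueness inequality with $y=y_n\in P_Y(x_n)$ to get
$$
\|x-y_n\|\;\geq\;\|x-y_o\|+r\|y_n-y_o\|.
$$
On the other hand, since $y_n$ is a best approximant to $x_n$, the triangle inequality produces
$$
\|x-y_n\|\;\leq\;\|x-x_n\|+\|x_n-y_n\|\;\leq\;\|x-x_n\|+\|x_n-y_o\|\;\leq\;2\|x-x_n\|+\|x-y_o\|.
$$
Subtracting $\|x-y_o\|$ and dividing by $r$ gives $\|y_n-y_o\|\leq 2\|x_n-x\|/r$, as claimed.

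The principal obstacle is the combinatorial step establishing $Y\cap\bigcap_i\ker f_i=\{0\}$ from the minimality of $j$: the positivity of the coefficients $\lambda_i$ is essential, and the ``first-collision'' argument on the affine family $\lambda_i+t\mu_i$ is the delicate point. Once that is secured, the passage from strong uniqueness to the Lipschitz estimate is elementary.
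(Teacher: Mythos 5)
Your argument is correct. Note, however, that the paper does not prove this statement at all: it is stated as an immediate consequence of (\cite{SudWoj}, Th.~5.8) for the strong-uniqueness inequality and of the Freud theorem (\cite{Chen}, p.~82) for the Lipschitz estimate, so you are supplying a self-contained proof of a result the authors merely cite. Your derivation is sound at every step that matters: the reduction of strong uniqueness to showing $Y\cap\bigcap_{i=1}^{n+1}\ker f_i=\{0\}$ uses the positivity of the $\lambda_i$ twice (once to get $\phi\geq 0$, once to upgrade $\phi(z)=0$ to $f_i(z)=0$ for all $i$), and the dimension count is right --- if the intersection were nontrivial the restrictions $f_i|_Y$ would lie in a subspace of $Y^*$ of dimension at most $n-1$, so the space of linear relations among the $n+1$ functionals has dimension at least $2$, producing a relation $(\mu_i)$ independent of $(\lambda_i)$; moving along $\lambda_i+t\mu_i$ to the first vanishing coefficient (after replacing $\mu$ by $-\mu$ if necessary so that some $\mu_i<0$) yields a nonnegative, nontrivial relation with at most $n$ active functionals, and since these are the same extreme functionals still norming $x-y_o$, this contradicts the minimality of $j=n+1$. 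Compactness of the unit sphere of the finite-dimensional $Y$ then gives $r>0$, and your Freud step is the standard two-triangle-inequality computation (which also shows that the $\|y_n-y\|$ in the paper's statement should read $\|y_n-y_o\|$). What your route buys is transparency: the reader sees exactly where the hypothesis $j=n+1$ enters, at the cost of reproving a known theorem rather than invoking it.
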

In the sequel we also need
\begin{thm}
\label{deutsch}
(see \cite{Deutsch}, Th. 4.5). Let $X$ be a Banach space and $Y \subset X$ a one-dimensional subspace. Then $Y$ does not admit a continuous metric selection if and only if there exists $ x \in X$ such that 
$ 0 \in P_Y(x)$ and disjoint compact intervals $ I_1, I_2$ and two sequences $ \{ x_n\} $ and $ \{ y_n\}$ converging to $x$ such that for any $ n \in \mathbb{N}$ $P_Y(x_n) \subset I_1$ and 
$P_Y(w_n) \subset I_2.$ 
\end{thm}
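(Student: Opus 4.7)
The plan is to treat the two implications separately. The reverse direction is a short continuity argument; the forward implication is the substantive one and requires extracting two sequences that witness a genuine jump of $P_Y$.

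\textbf{Reverse direction.} Suppose $x\in X$, disjoint compact intervals $I_1,I_2\subset Y$, and sequences $x_n\to x$, $y_n\to x$ with $P_Y(x_n)\subset I_1$ and $P_Y(y_n)\subset I_2$ are given. If a continuous metric selection $S\colon X\to Y$ existed, then $S(x_n)\in I_1$ and $S(y_n)\in I_2$ for every $n$; continuity of $S$ at $x$ together with closedness of $I_1,I_2$ would force $S(x)\in I_1\cap I_2=\emptyset$, a contradiction.

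\textbf{Forward direction.} Fix $y_0\in Y\setminus\{0\}$ and identify $Y$ with $\mathbb{R}$ via $t\mapsto ty_0$. Each $P_Y(z)$ is a nonempty closed bounded convex subset of a one-dimensional space, hence a compact interval $[\alpha(z),\beta(z)]$. The metric projection $P_Y$ is upper Hausdorff semicontinuous (standard for projections onto finite-dimensional subspaces of Banach spaces); a direct check gives that $\alpha$ is lower semicontinuous and $\beta$ is upper semicontinuous. If $\alpha$ (resp.\ $\beta$) were continuous, the map $z\mapsto\alpha(z)y_0$ (resp.\ $z\mapsto\beta(z)y_0$) would be a continuous metric selection, so under our hypothesis both $\alpha$ and $\beta$ are discontinuous. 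Michael's selection theorem, applied to the closed-convex-valued map $P_Y$, states that lower hemi-continuity of $P_Y$ produces a continuous selection; its failure therefore forces the failure of lower hemi-continuity. One obtains $z_0\in X$, $c\in(\alpha(z_0),\beta(z_0))$, $\epsilon>0$, and $z_n\to z_0$ with $P_Y(z_n)\cap(c-\epsilon,c+\epsilon)=\emptyset$. Since each $P_Y(z_n)$ is connected and upper Hausdorff semicontinuity at $z_0$ provides a uniform bound, after extraction all $P_Y(z_n)$ lie in a single compact interval $J_1\subset(-\infty,c-\epsilon]$. Translating by $-cy_0$ and setting $x:=z_0-cy_0$, $x_n:=z_n-cy_0$, $I_1:=J_1-c$ delivers $0\in P_Y(x)$ together with one of the two required sequences.

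The main obstacle is producing the \emph{second} sequence $y_n\to x$ with $P_Y(y_n)\subset I_2$ for an interval $I_2$ disjoint from $I_1$. Failure of lower hemi-continuity by itself only yields one-sided avoidance of the gap. To force a sequence on the opposite side of $c$, I would argue by contradiction: if every sequence converging to $x$ eventually had $P_Y(\cdot)$ meeting only the low side, then the upper semicontinuous envelope $\beta$ would become locally continuous near $x$, and pasting it with the low-side selection via a Urysohn-type partition of unity would produce a local continuous selection of $P_Y$; this could then be extended over $X$ by standard selection-extension techniques, contradicting the starting assumption. This two-sided balancing is the technical heart of Deutsch's proof in \cite{Deutsch}.
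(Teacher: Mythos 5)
First, note that the paper offers no proof of this statement: it is quoted from \cite{Deutsch} (Theorem 4.5), so the only thing to compare your attempt against is that source. Your reverse implication is correct and is the standard argument: a continuous selection $S$ would satisfy $S(x)\in I_1$ and $S(x)\in I_2$ by continuity along the two sequences and closedness of the intervals, contradicting $I_1\cap I_2=\emptyset$.

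The forward implication, however, has a genuine gap exactly where you flag it. Failure of lower semicontinuity (via Michael's theorem) gives you one sequence $z_n\to z_0$ whose projections avoid a neighbourhood of some $c\in P_Y(z_0)$ and hence, after extraction, lie on one side of $c$; but the second sequence is the whole content of the theorem, and the contradiction you sketch does not close the gap, for two reasons. First, the assumption that every sequence converging to $x$ eventually has $P_Y(\cdot)$ meeting only the low side does not imply that $\beta$ is continuous near $x$: it only controls behaviour relative to the single gap at $c$, and $\beta$ may still jump at nearby points. Second, and more seriously, even a genuine local continuous selection near $x$ would not contradict your hypothesis, which is the \emph{global} nonexistence of a selection on $X$; the obstruction could sit at an entirely different point, so no contradiction is reached. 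The correct pivot is almost lower semicontinuity rather than lower semicontinuity: for an interval-valued map into $\mathbb{R}$, failure of almost lower semicontinuity at $x$ says precisely that $\sup_V\alpha-\inf_V\beta\geq 2\epsilon$ for some $\epsilon>0$ and every neighbourhood $V$ of $x$, which (after extracting convergent subsequences of the endpoints, using the uniform bound furnished by upper semicontinuity) yields the two sequences with projections in disjoint compact intervals. The substantive converse --- that almost lower semicontinuity at every point forces a global continuous selection for an upper semicontinuous, compact-convex-valued map into a one-dimensional space --- is the Deutsch--Kenderov-type construction that your sketch replaces with ``standard selection-extension techniques.'' Without that ingredient the forward direction is not proved.
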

\begin{lem}
\label{general}
Let $X$ and $Y$ be two Banach spaces and let $X_1 \subset X$ be a closed subspace of $X$ and $Y_1 \subset Y$ be a closed subspace of $Y.$ Assume that $ T: X \rightarrow Y$ is a linear surjective isometry such that 
$ T(X_1) = Y_1.$ Then for any $ x \in X,$ $ P_{Y_1}(Tx) = T(P_{X_1}(x)).$
\end{lem}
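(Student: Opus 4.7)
The plan is to verify the set equality by going through the natural bijection between $X_1$ and $Y_1$ induced by $T$, and observing that distances are preserved.

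First I would establish the distance equality $\dist(Tx, Y_1) = \dist(x, X_1)$. Since $T$ is a surjective isometry with $T(X_1) = Y_1$, every element of $Y_1$ has the form $Tx_1$ with $x_1 \in X_1$, and
\[
\|Tx - Tx_1\| = \|T(x - x_1)\| = \|x - x_1\|.
\]
Taking the infimum over $x_1 \in X_1$ on the right and over $y \in Y_1$ on the left yields the desired equality of distances.

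Next, I would characterize best approximants. Fix $y \in Y_1$ and write $y = Tx_1$ with a unique $x_1 \in X_1$ (unique because $T$ is a bijection and $T(X_1) = Y_1$). By the computation above, $\|Tx - y\| = \|x - x_1\|$, and by the distance equality, $\|Tx - y\| = \dist(Tx, Y_1)$ if and only if $\|x - x_1\| = \dist(x, X_1)$. Thus $y \in P_{Y_1}(Tx)$ if and only if $x_1 \in P_{X_1}(x)$, which is equivalent to $y \in T(P_{X_1}(x))$.

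There is no real obstacle here; the lemma is essentially a translation of the metric projection under a linear surjective isometry. The only small point to take care of is the bijective correspondence $Y_1 \ni y \leftrightarrow x_1 \in X_1$, which is guaranteed by surjectivity of $T$ and the hypothesis $T(X_1) = Y_1$ (together with injectivity of $T$, which follows from its being an isometry).
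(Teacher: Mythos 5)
Your proposal is correct and fills in exactly the routine argument the paper leaves implicit (the paper's proof is just ``follows immediately by definitions of $P_Y$ and $T$''): the isometry preserves distances to the subspace and induces a bijection between $X_1$ and $Y_1$, so best approximants correspond under $T$. No issues.
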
 
\begin{proof}
Follows immediately by definitions of $P_Y$ and $T.$
\end{proof}
\begin{lem}
\label{Czebyszew}
Let $Y$ be a closed subset of a Banach space $X$ such that $dim(Span(Y))$ is finite. Assume $x\in X$ and $P_Y(x) =\{y\}.$ If $ x_n \in X$ and $ \|x_n-x\| \rightarrow 0,$ then for any $ y_n \in P_Y(x_n),$  we have $\|y_n-y\| \rightarrow 0.$
\end{lem}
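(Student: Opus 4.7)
The plan is to show that the sequence $\{y_n\}$ is bounded, use finite-dimensionality of $\mathrm{Span}(Y)$ to extract a convergent subsequence, verify the limit must lie in $P_Y(x)=\{y\}$, and then pass to the standard subsequence-of-subsequence argument to conclude that the whole sequence converges.

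First I would check boundedness of $\{y_n\}$. Since $y \in Y$, for each $n$ we have
$$
\|x_n - y_n\| = \mathrm{dist}(x_n, Y) \leq \|x_n - y\|,
$$
hence $\|y_n\| \leq \|y_n - x_n\| + \|x_n\| \leq \|x_n - y\| + \|x_n\|$, and the right-hand side is bounded because $x_n \to x$. Moreover, from $|\mathrm{dist}(x_n,Y) - \mathrm{dist}(x,Y)| \leq \|x_n-x\| \to 0$ we deduce $\|x_n - y_n\| \to \mathrm{dist}(x,Y) = \|x-y\|$.

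Next I would use that $y_n \in Y \subset \mathrm{Span}(Y)$, which is a finite-dimensional (hence locally compact) subspace of $X$. So the bounded sequence $\{y_n\}$ has a convergent subsequence $y_{n_k} \to z$ for some $z \in \mathrm{Span}(Y)$; since $Y$ is closed, $z \in Y$. Passing to the limit in $\|x_{n_k} - y_{n_k}\| = \mathrm{dist}(x_{n_k}, Y)$ yields
$$
\|x - z\| = \mathrm{dist}(x, Y),
$$
so $z \in P_Y(x) = \{y\}$, forcing $z = y$.

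Finally, to upgrade this subsequential convergence to convergence of the full sequence, I would argue by contradiction in the standard way: if $\|y_n - y\| \not\to 0$, there exist $\varepsilon > 0$ and a subsequence with $\|y_{n_k} - y\| \geq \varepsilon$ for all $k$. Applying the previous two steps to this subsequence (it is still bounded, and each $y_{n_k}$ lies in $P_Y(x_{n_k})$ with $x_{n_k} \to x$), we extract a further subsequence converging to $y$, contradicting $\|y_{n_k} - y\| \geq \varepsilon$. No step is a serious obstacle here; the only point that needs care is that closedness of $Y$ plus finite-dimensionality of $\mathrm{Span}(Y)$ together suffice (finite-dimensional subspaces are closed, and closed subsets of finite-dimensional subspaces are locally compact, which is exactly what drives the compactness argument).
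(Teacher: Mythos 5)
Your proof is correct and follows essentially the same route as the paper's: boundedness of $\{y_n\}$ from the triangle inequality, compactness in the finite-dimensional span combined with closedness of $Y$ to extract a convergent subsequence, continuity of the distance function to identify the limit as the unique best approximant $y$, and a subsequence argument to conclude. The only cosmetic difference is that the paper phrases the whole argument as a single proof by contradiction, whereas you first establish subsequential convergence and then upgrade it; the substance is identical.
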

\begin{proof}
Assume to the contrary, that
there exist $ \{ x_n\} \subset X,$ $y_n \in P_Y(x_n)$ and $x \in X$ such that $ P_Y(x) =\{y\},$   $ x_n \rightarrow x$ and  $\{y_n\}$ does not converge to $y.$ Passinng to a subsequence, if necessary, we can assume that 
there exists $ d> 0$ such that $ \| y_n -y\| > d.$  Since $ x_n \rightarrow x,$ $\{ y_n\} $ is bounded. Since $dim(Span(Y))<\infty$ and $Y$ is closed, passing to a convergent subsequence, if necessary, we can assume that
$ y_n \rightarrow z \in Y.$ By the continuity of the function $ x \rightarrow dist(x,Y)$ we get $ \| x -z\| = dist(x,Y).$ Since $P_Y(x) = \{y\},$ $ y = z,$ which leads to a contradiction. 
\end{proof}
\begin{rem}
Observe that Lemma \ref{Czebyszew} cannot be generalized to the case of $Y$ being closed subspaces of Banach spaces. In fact, in \cite{Brown1} A. L. Brown constructed a strictly convex reflexive space
having a Chebyshev subspace $Y$ of codimension two such that the metric projection operator  $ x \rightarrow P_Y(x) \in Y$ is not continuous. (Since $ Y$ is a Chebyshev subspace we treat $ P_Y(x)$ as an element from $Y.)$
\end{rem}
\begin{lem}
 \label{Izbior}
Let $ X$ be a Banach space and let $ Y$ be a proper $n$-dimensional subspace of $X.$ Assume that $Y $ is not contained in the intersection of kernels of $n$ linearly independent functionals from the set 
$ext(S(X^*)$, where $ext(S(X^*)$ denotes the set of all extreme point of the unit sphere in $X^*.$ Then for any $x \in X$ there exists $ y_x \in Y$ and $ r_x >0$ depending only on $x$ such that for any $y \in Y,$
$$
\| x - y \| \geq \| x - y_x\| + r\|y-y_x\|.
$$ 
\end{lem}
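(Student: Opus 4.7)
The plan is to reduce to Theorem \ref{sudwoj} by producing a best approximant $y_x \in P_Y(x)$ at which the minimal $j$ of Theorem \ref{singer} equals $n+1$. If $x\in Y$, take $y_x=x$ and the conclusion is trivial; otherwise fix $x\in X\setminus Y$ and, using $\dim Y<\infty$, pick some $y_x\in P_Y(x)$. Theorem \ref{singer} then furnishes $j\in\{1,\ldots,n+1\}$, extreme functionals $f_1,\ldots,f_j\in\mathrm{ext}(S(X^*))$ norming $x-y_x$, and scalars $\lambda_1,\ldots,\lambda_j>0$ with $\sum_{i=1}^{j}\lambda_i(f_i)|_Y=0$. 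Among all admissible data $(y_x,f_i,\lambda_i)$ I would select one realizing the smallest value, denoted $j^*$.

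The heart of the argument is the claim $j^*=n+1$. Suppose to the contrary that $j^*\le n$. The relation $\sum\lambda_i(f_i)|_Y=0$ with positive $\lambda_i$ exhibits a nontrivial linear dependence among the restrictions $f_i|_Y$ in the $n$-dimensional dual $Y^*$, and minimality of $j^*$ upgrades this to an \emph{irreducible} positive dependence (no proper subfamily of $\{f_i|_Y\}$ admits such a relation). From this irreducibility, combined with the extremality of each $f_i$ on $S(X^*)$, I aim to extract $n$ linearly independent extreme functionals lying in the annihilator $Y^\perp=\{f\in X^*:f|_Y=0\}$, directly contradicting the standing hypothesis. The case $n=1$ is immediate: $j^*=1$ forces $f_1|_Y=0$, already producing one extreme functional annihilating $Y$. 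For $n\ge 2$, the main obstacle is precisely this extraction step; one must pass from "$j^*\le n$ restrictions $f_i|_Y$ with an irreducible positive vanishing combination" to "$n$ linearly independent elements of $\mathrm{ext}(S(X^*))\cap Y^\perp$", a transition that hinges on a careful analysis of the restriction map $X^*\to Y^*$ and the facial structure of the unit ball of $X^*$.

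Once $j^*=n+1$ is established, Theorem \ref{sudwoj} applied at $y_x$ delivers $r_x>0$ such that $\|x-y\|\ge\|x-y_x\|+r_x\|y-y_x\|$ for every $y\in Y$. Both $y_x$ and $r_x$ depend only on $x$, matching the claimed form of the lemma.
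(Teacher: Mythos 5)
Your overall strategy coincides with the paper's: the paper's entire proof of this lemma is the single line ``Follows from Theorem \ref{sudwoj}'', and you correctly identify that the substance must be showing that the minimal $j$ from Theorem \ref{singer} equals $n+1$, after which Theorem \ref{sudwoj} (together with the existence of a best approximant from a finite-dimensional subspace) gives the strong unicity inequality. Your treatment of the trivial case $x\in Y$ and of the case $n=1$ is correct and complete: if $j^*=1$ then $f_1|_Y=0$, which exhibits one (hence ``$1$ linearly independent'') extreme functional whose kernel contains $Y$, contradicting the hypothesis, so $j^*=2=n+1$. Since the paper only ever invokes this lemma for one-dimensional $Y$ (in the discussion preceding Theorem \ref{hkl} and in the examples), this is the case that actually matters.

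However, for $n\ge 2$ your argument has a genuine gap, and you say so yourself: the ``extraction step'' is not carried out, and as formulated it cannot be. An irreducible positive dependence $\sum_{i=1}^{j^*}\lambda_i f_i|_Y=0$ with $2\le j^*\le n$ tells you only that the restrictions $f_1|_Y,\dots,f_{j^*}|_Y$ span a subspace of $Y^*$ of dimension $j^*-1\le n-1$; no individual $f_i$ need vanish on $Y$ (already for $j^*=2$ one merely has $f_1|_Y=-(\lambda_2/\lambda_1)f_2|_Y\ne 0$), and extremality of the $f_i$ gives no handle for manufacturing even one element of $\mathrm{ext}(S(X^*))\cap Y^{\perp}$, let alone $n$ linearly independent ones. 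So the intended contradiction with the hypothesis is never reached, and the claim $j^*=n+1$ remains unproved for $n\ge 2$. To be fair, the paper offers no argument here either, and it is not clear that the stated hypothesis implies $j^*=n+1$ for $n\ge 2$ at all; but as a proof of the lemma as stated, your proposal is incomplete beyond the one-dimensional case.
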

\begin{proof}
Follows from Theorem \ref {sudwoj}.
\end{proof}
Now, motivated by (\cite{Deutsch}, Th. 6.3 and Cor. 6.6) we restrict ourselves to the case of one-dimensional subspaces of Lorentz sequence spaces.
Let $w$ be a weight function, i.e. $w=(w(1),..w(n),...)$ is a decreasing sequence of positive numbers such that 
\begin{equation} 
\label{sum}
\sum_{n=1}^{\infty} w(n) = + \infty.
\end{equation}
The Lorentz sequence space $d(w,1)$ is the collection of all real sequences $ x=\{ x(n)\},$ 
such that 
$$ 
\| x\|_{w,1} = \sum_{n=1}^{\infty} x^*(n)w(n) < \infty, 
$$
where $x^*$ denotes the decreasing rearrangement of $x.$ Notice that by (\ref{sum}), 
\begin{equation}
\label{c0}
d(w,1) \subset c_o.
\end{equation}
It is well-known that $d(w,1)$ is a Banach space under the norm $ \| \cdot \|_{w,1}.$
\newline
The Marcinkiewicz sequence space $d^*(w,1)$ consists of all real sequences $x = \{ x(n)\}$ such that 
$$
\|x\|_W =\sup_n \frac{ \sum_{j=1}^n x^*(j)}{W(n)} < \infty,
$$ 
where $W(n) = \sum_{j=1}^n w(j).$ 
It is well-known that $d^*(w,1)$ is the dual space of $d(w,1).$ For more details of the Lorentz and Marcinkiewicz spaces the reader is referred to \cite{KamLee,KPS,LinTza}. Now we characterize $1$-dimensional subspaces of $d(w,1)$ under which there exists a continuous metric selection. By Lemma \ref{Czebyszew} and Lemma \ref{Izbior}  if $Y = span[y]$ is not contained in the kernel of an extreme functional from $S_{d(w,1)^*}$ then there exists a continuous metric selection onto $Y.$ Hence to the end of this section we assume that there exists $ f \in ext(S_{d^*(w,1)})$ such that $ Y \subset ker(f).$ By the proof of [\cite{HKL}, Th. 2.2] we have 
\begin{thm}
\label{hkl}
If $ f \in ext(S_{d^*(w,1)})$ then $ f^* =w.$ Moreover if $ \lim_n w(n) =0, $ then $ f \in ext(S_{d^*(w,1)})$ if and only if $ f^* =w.$ 
\end{thm}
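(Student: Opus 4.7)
The plan is to analyze the partial sums $F_g(n) = \sum_{j=1}^{n} g^*(j)$ and $W(n) = \sum_{j=1}^{n} w(j)$, noting that $\|g\|_W = \sup_n F_g(n)/W(n)$, and that $\|g\|_W \leq 1$ is equivalent to $F_g(n) \leq W(n)$ for every $n$.

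For the first statement I argue the contrapositive. Assume $f \in S_{d^*(w,1)}$ with $f^* \neq w$. Since $\|f\|_W = 1$, the constraint $F_f(n) \leq W(n)$ is attained at some $n_0$, but the assumption $f^* \neq w$ forces strict inequality $W(n) - F_f(n) > 0$ at a range of indices. I would exploit this slack by choosing two coordinates $i,j$ of $f$, positioned at ranks of $f^*$ where the partial-sum constraint is non-binding, together with a small $\varepsilon > 0$, so that the perturbations $g := f + \varepsilon(\mathrm{sgn}(f(i)) e_i - \mathrm{sgn}(f(j)) e_j)$ and $h := f - \varepsilon(\mathrm{sgn}(f(i)) e_i - \mathrm{sgn}(f(j)) e_j)$ both lie in $B_{d^*(w,1)}$: the binding partial-sum constraints remain undisturbed (one coordinate lies above, the other below the binding rank), while the non-binding ones absorb the perturbation of the rearrangement. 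Then $f = (g+h)/2$ with $g \neq h$ in $B_{d^*(w,1)}$, contradicting extremality.

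For the second statement, assume $\lim_n w(n) = 0$ and $f^* = w$, and suppose $f = (g+h)/2$ with $\|g\|_W, \|h\|_W \leq 1$. Using $|2f| \leq |g|+|h|$ pointwise and the Hardy--Littlewood--P\'olya partial-sum inequality $F_{|g|+|h|}(n) \leq F_g(n) + F_h(n)$, one obtains $2F_f(n) \leq F_g(n) + F_h(n)$, so that
\[
W(n) = F_f(n) \leq \tfrac{1}{2}(F_g(n) + F_h(n)) \leq W(n) \qquad \text{for every } n,
\]
forcing $F_g(n) = F_h(n) = W(n)$ for all $n$, hence $g^* = h^* = w$. The remaining step is to deduce $g = h = f$ coordinatewise: for the discrepancy $\delta := g - f = f - h$, any nonzero $\delta$ at a coordinate $k$ would introduce into the multiset $\{|g(m)|\}_m$ a value differing from $\{|f(m)|\}_m$; since $\lim_n w(n) = 0$, this extra value cannot be absorbed among infinitely many larger values of $|g|$ (all but finitely many are arbitrarily small), so it must enter $g^*$ at a finite rank and produce a bump violating some constraint $F_g(n) \leq W(n)$, contradicting $g^* = w$.

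The main obstacle is precisely this final rigidity step of the second direction, where one must rule out all coordinate rearrangements and sign flips of $g,h$ that formally preserve $g^* = h^* = w$. The hypothesis $\lim_n w(n) = 0$ is sharp: for $w = (1, \tfrac{1}{2}, \tfrac{1}{2}, \dots)$ (so $\lim w = \tfrac{1}{2} > 0$), the point $f = (0, 1, \tfrac{1}{2}, \tfrac{1}{2}, \dots)$ has $f^* = w$, yet $f = \tfrac{1}{2}(f + \varepsilon e_1) + \tfrac{1}{2}(f - \varepsilon e_1)$ is a nontrivial convex decomposition in $B_{d^*(w,1)}$ for every $0 < \varepsilon \leq \tfrac{1}{2}$, because the extra value $\varepsilon$ is absorbed among the infinitely many $\tfrac{1}{2}$'s. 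Converting this absorption obstruction (precisely blocked by $\lim w(n) = 0$) into a rank-based violation of $F_g(n) \leq W(n)$ is the technical heart of the argument.
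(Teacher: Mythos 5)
First, a caveat: the paper itself gives no proof of this statement --- Theorem \ref{hkl} is imported from the proof of Theorem 2.2 of \cite{HKL} --- so your attempt can only be judged on its own terms. Your framework is the right one: working with the partial sums $F_g(n)=\sum_{j\le n}g^*(j)$ and the constraints $F_g(n)\le W(n)$, using Hardy--Littlewood--P\'olya subadditivity for sufficiency, and your example $w=(1,\tfrac12,\tfrac12,\dots)$, $f=(0,1,\tfrac12,\tfrac12,\dots)$ correctly shows that $\lim_n w(n)=0$ cannot be dropped. The equality chain $2W(n)=2F_f(n)\le F_{|g|+|h|}(n)\le F_g(n)+F_h(n)\le 2W(n)$ is valid and does yield $g^*=h^*=w$.

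There are, however, two genuine gaps. In the necessity part, the assertion that the constraint ``is attained at some $n_0$'' is unjustified ($\|f\|_W$ is a supremum that need not be attained), and, more seriously, the perturbation is never actually constructed: the whole difficulty lies in handling an arbitrary pattern of binding constraints, absolute slacks $W(n)-F_f(n)$ that may tend to $0$ along a subsequence, and ties in $f^*$ that make the effect of a coordinate perturbation on the rearrangement delicate. As written this part is a statement of intent, not a proof. In the sufficiency part, the final rigidity step fails as argued: a nonzero $\delta=g-f$ need not ``introduce into the multiset $\{|g(m)|\}$ a value differing from $\{|f(m)|\}$'' --- take $g$ to be a signed permutation of $f$; then $g^*=w$ still holds and no constraint $F_g(n)\le W(n)$ is violated by $g$ alone, so your contradiction never materializes. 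The contradiction must be extracted from the simultaneous conditions on $g$ and $h=2f-g$. One correct route from where you stop: the equality $2F_f(n)=F_{|g|+|h|}(n)$ together with $2|f|\le |g|+|h|$ and the finiteness of the level sets $\{|f|>s\}$ for $s>0$ (this is where $\lim_n w(n)=0$ enters) forces $|g|+|h|=2|f|=|g+h|$ pointwise, so $g(m)$ and $h(m)$ have equal signs; then equality for every $s>0$ in
\begin{equation*}
\sum_m\Bigl(\tfrac{|g(m)|+|h(m)|}{2}-s\Bigr)^+\le\tfrac12\sum_m\Bigl[(|g(m)|-s)^++(|h(m)|-s)^+\Bigr],
\end{equation*}
both sides being equal to the finite quantity $\sum_n(w(n)-s)^+$, forces $|g(m)|=|h(m)|$ for every $m$ by strict convexity of $t\mapsto(t-s)^+$ at its kink, whence $g=h=f$. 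Without an argument of this kind the sufficiency is not established.
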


We start with a crucial for our considerations lemma.

\begin{lem}
\label{selection}
Assume that $w$ is a strictly decreasing weight function.
Let 
$$ 
y =(y(1),...y(n),...) \in d(w,1) \setminus \{0\} 
$$ 
and let $ Y = span[y] \subset ker(w).$ Assume that there exists $ x=x^* \in d(w,1)$ such that $P_Y(x) = [-1,1]y.$ Set $ z = (z(1),...,z(n),...), $ where 
\begin{equation}
\label{szereg}
z(j) =  \sum_{l=j}^{\infty} |y(l+1) -y(l)|.
\end{equation}
Then $z \in d(w,1),$ (in particular $z(j) \in \mathbb{R}$ for any $ j \in \mathbb{N}$) and 
\begin{equation}
\label{zj}
z(j) \geq |y(j)| \hbox{ for } j \in \mathbb{N}.\end{equation}
Moreover, if 
there exists a subsequence $ (n_k)$ such that 
\begin{equation}
\label{strict}
(y(n_k) - y(n_k +1))(y(n_{k+1}) - y(n_{k+1}+1)<0.
\end{equation}
then there exist compact intervals $ I_1,$ $I_2,$ with $ I_1 \cap I_2 = \emptyset$ and two sequences $ (x^k)$ and $(w^k)$ converging to $z$ such that for any $ k \in \mathbb{N}$ $P_Y(x^k) \subset I_1$ and $ P_Y(w^k) \subset I_2.$
If (\ref{strict}) is not satisfied, $ x=x^* \in d(w,1)$ and $P_Y(x) = [-1,1]y,$ then for any sequences $ (x^k)$ and $(w^k)$ converging to $x$ and any compact intervals $ I_1,$ $I_2,$ satisfying  $P_Y(x^k) \subset I_1$ and $ P_Y(w^k) \subset I_2$ for any $ k \in \mathbb{N},$ we have $ I_1 \cap I_2 \neq \emptyset.$
\end{lem}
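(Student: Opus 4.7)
The proof has three layers: structural properties of $z$, a bifurcation construction when \eqref{strict} holds, and a stability argument when \eqref{strict} fails. The strict monotonicity of $w$ and the equality case of the Hardy--Littlewood rearrangement inequality are used throughout.

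\emph{Structural facts.} Since $d(w,1)\subset c_o$ by \eqref{c0}, $y(n)\to 0$, so telescoping gives $y(j)=-\sum_{l\geq j}(y(l+1)-y(l))$, whence $|y(j)|\leq z(j)$ once $z(j)$ is finite. For finiteness and $z\in d(w,1)$, identify via Theorem~\ref{hkl} the extreme functional annihilating $Y$ with $w$ itself, so that $\sum w(n)y(n)=0$; since $\|w\|_{d^*(w,1)}=1$,
\[
\|x-\lambda y\|_{w,1}\geq\sum w(n)(x-\lambda y)(n)=\|x\|_{w,1}\qquad(\lambda\in\mathbb{R}),
\]
and the hypothesis $P_Y(x)=[-1,1]y$ forces equality for every $\lambda\in[-1,1]$. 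Because $w$ is strictly decreasing, equality requires $x-\lambda y$ to be nonnegative and nonincreasing; applied at $\lambda=\pm 1$ this yields $|y(l+1)-y(l)|\leq x(l)-x(l+1)$ for every $l$, and summing over $l\geq j$ (using $x\in c_o$) gives $z(j)\leq x(j)$. Since $z$ is nonincreasing by construction, $z=z^*$ and $\|z\|_{w,1}\leq\|x\|_{w,1}<\infty$. The same nonnegative-nonincreasing check applied to $z\pm y$, via $(z\pm y)(j)-(z\pm y)(j+1)=|y(j+1)-y(j)|\pm(y(j+1)-y(j))\geq 0$, shows $[-1,1]y\subset P_Y(z)$.

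\emph{Bifurcation under \eqref{strict}.} After passing to a further subsequence, arrange $y(n_{2k})>y(n_{2k}+1)$ and $y(n_{2k+1})<y(n_{2k+1}+1)$ for all $k$. Fix $\epsilon\in(0,1)$, set $\Delta_k=y(n_{2k})-y(n_{2k}+1)$, $\Delta'_k=y(n_{2k+1}+1)-y(n_{2k+1})$, and define
\[
x^k=z+(1+\epsilon)\Delta_k\,e_{n_{2k}+1},\qquad w^k=z+(1+\epsilon)\Delta'_k\,e_{n_{2k+1}+1}.
\]
A direct computation gives $(x^k-\lambda y)(n_{2k})-(x^k-\lambda y)(n_{2k}+1)=\Delta_k(-\epsilon-\lambda)$, while at every other index the gap of $x^k-\lambda y$ is either unchanged from that of $z-\lambda y$ or has $(1+\epsilon)\Delta_k$ added. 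Hence $x^k-\lambda y$ is nonnegative and nonincreasing precisely for $\lambda\in[-1,-\epsilon]$, and on this interval $\|x^k-\lambda y\|_{w,1}=\|z\|_{w,1}+(1+\epsilon)\Delta_k w(n_{2k}+1)$. The strict Hardy--Littlewood inequality (strict because $w$ is strictly decreasing) together with the further sign-flip indices of $y(l+1)-y(l)$ supplied by \eqref{strict} give a strictly larger norm for every $\lambda\notin[-1,-\epsilon]$. Therefore $P_Y(x^k)\subset I_1:=[-1,-\epsilon]y$ and, by the symmetric analysis, $P_Y(w^k)\subset I_2:=[\epsilon,1]y$, with $I_1\cap I_2=\emptyset$. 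Finally $\|x^k-z\|_{w,1}=(1+\epsilon)\Delta_k w(n_{2k}+1)\to 0$, since $\Delta_k\to 0$ and $w$ is bounded; analogously $w^k\to z$.

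\emph{Stability under $\neg$\eqref{strict}, and main obstacle.} In this case the sign of $y(l+1)-y(l)$ is eventually constant, so $y$ is monotone on a tail, and the gap-flipping device above is unavailable. A perturbative analysis parallel to the bifurcation step, run in reverse, shows the metric projection is lower semicontinuous at $x$: for each $\lambda\in[-1,1]$ and each $x^k\to x$ there exist $\lambda_k\to\lambda$ with $\lambda_k y\in P_Y(x^k)$, and likewise for $w^k\to x$. Consequently $[-1,1]y\subset I_1$ and $[-1,1]y\subset I_2$, and the intersection is nonempty. The crux is the bifurcation step, where one needs uniformly disjoint projection intervals from perturbations of vanishing norm; this balance is secured by the explicit linear dependence of the gap at $n_{2k}$ on $\lambda$. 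The stability claim rests on extracting lower semicontinuity of $P_Y$ at $x$ from the eventual monotonicity of $y$, which does not follow from general Banach-space principles and has to be proved directly by the same gap analysis used above.
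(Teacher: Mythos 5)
Your structural facts and your bifurcation construction are essentially sound, and in places cleaner than the paper's: the telescoping identity $y(j)=-\sum_{l\geq j}(y(l+1)-y(l))$ gives \eqref{zj} directly (the paper argues by contradiction, showing $z-y\notin c_o$ otherwise), and your perturbation $x^k=z+(1+\epsilon)\Delta_k e_{n_{2k}+1}$ pins $P_Y(x^k)$ down to exactly $[-1,-\epsilon]y$, whereas the paper subtracts $2|y(n_{2k})-y(n_{2k}+1)|$ at position $n_{2k}$ and obtains $P_Y(x^k)\subset(-\infty,-1]y$ together with a boundedness argument. Two small repairs are needed there: $\|x^k-z\|_{w,1}=(1+\epsilon)\Delta_k w(1)$, not $(1+\epsilon)\Delta_k w(n_{2k}+1)$ (it still tends to $0$); and to exclude $\lambda<-1$ you must explicitly use an index $j\neq n_{2k}$ with $y(j)<y(j+1)$, supplied by \eqref{strict}, to destroy monotonicity of $x^k-\lambda y$ --- the phrase ``strict Hardy--Littlewood inequality'' alone does not show the norm exceeds $\|z\|_{w,1}+(1+\epsilon)\Delta_k w(1)$ rather than merely $\|z\|_{w,1}$.

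The final part is a genuine gap, and the intermediate claim on which you rest it is false. You assert that when \eqref{strict} fails the metric projection is lower semicontinuous at $x$ in the strong sense that for every $\lambda\in[-1,1]$ and every $x^k\rightarrow x$ there exist $\lambda_k\rightarrow\lambda$ with $\lambda_k y\in P_Y(x^k)$. Your own bifurcation device refutes this: if $y$ is eventually nonincreasing with infinitely many strict drops $y(m)>y(m+1)$, the perturbations $z+(1+\epsilon)(y(m_k)-y(m_k+1))e_{m_k+1}$ converge to $z$ and have projection set $[-1,-\epsilon]y$, which avoids every $\lambda>-\epsilon$; so lower semicontinuity fails even though \eqref{strict} does. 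What is true --- and what the paper proves --- is only the one-sided statement that $-1$ must lie in every compact interval $I$ with $P_Y(x^k)\subset Iy$ for all $k$; this common endpoint is what forces $I_1\cap I_2\neq\emptyset$. Establishing even this weaker fact is the bulk of the paper's proof and cannot be dispatched by ``the same gap analysis run in reverse'': for an arbitrary sequence $x^k\rightarrow x$ in norm one has no pointwise monotonicity information about $x^k-\lambda y$. The paper instead takes supporting functionals $f^k$ of $x^k-a_k y$ annihilating $y$, extracts a weak$^*$ cluster point via Banach--Alaoglu, identifies it with $w$ using the extreme-point description of $S_{d^*(w,1)}$ (Theorem \ref{hkl}) together with Theorem \ref{singer}, shows $f^{n_k}=w$ eventually (with separate cases according to whether $y(n)=y(n+1)$ or $y(n)=0$ can occur on the tail), and only then concludes $my\in P_Y(x^{n_k})$ for $m$ near $-1$, contradicting the definition of $\liminf a_n$. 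None of this machinery, nor a substitute for it, appears in your proposal.
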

\begin{proof}
Since $x=x^*,$  $ \| x\|_{w,1} = \sum_{j=1}^\infty w(j)x(j).$ Since $\sum_{j=1}^{\infty} w(j)y(j)=0,$ and $ P_Y(x) = [-1,1]y,$ for any $ a \in  [-1,1]$
$$
\| x +ay\|_{w,1} = dist(x,Y) = \|x\|_{w,1} = \sum_{j=1}^\infty w(j)x(j) = \sum_{j=1}^\infty w(j)(x(j)+ay(j)).
$$
In particular, since w is strictly decreasing, $ (x+y)^* = x+y$ and $ (x-y)^* = x-y.$ Hence, for any $j \in  \mathbb{N},$ 
\begin{equation}
\label{ineq1}
x(j) \geq |y(j)|,
\end{equation}
and 
\begin{equation}
\label{ineq2}
x(j) - x(j+1) \geq |y(j) - y(j+1)|.
\end{equation}
By (\ref{ineq2}), for $ m < n,$ 
\begin{equation}
\label{ineq3}
x(m) - x(n) \geq \sum_{j=m}^{n-1} |y(j+1) - y(j)|.
\end{equation}
Since $ d(w,1) \subset c_o,$ by (\ref{ineq3}), for any $ m \in \mathbb{N},$
$$
x(m) \geq \sum_{j=m}^{\infty} |y(j+1) - y(j)| =z(m).
$$ 
Since $ x \in d(w,1), $ $z =(z(1),...,)$ defined by (\ref{szereg}) also belongs to $ d(w,1).$
Now we show that
$z(j) \geq |y(j)|$ for $ j \in \mathbb{N}.$ Assume to the contrary that $z(j_o)<|y(j_o)|$ for some $j_o\in\mathbb{N}.$ First suppose that $ |y(j_o)| = y(j_o).$ Since $ z(j) - z(j+1) = |y(j)-y(j+1)|,$
for any $ j > j_o$ we get 
$$
0> z(j_o) -y(j_o) \geq z(j) - y(j).
$$
Hence for $j\geq j_o,$ 
$$
| z(j) -y(j)| \geq |z(j_o) - y(j_o)| > 0.
$$
This shows that $ z-y \notin c_o.$ But $ z-y \in d(w,1) \subset c_o$, so a contradiction. If $ y(j_o)= - |y(j_o)|,$ the proof goes in the same way.
Now assume that there exists a subsequence $ (n_k)$ satisfying (\ref{strict}). Define for $k \in \mathbb{N},$
\begin{equation}
\label{discont}
z^k =(z(1),...,z(k-1),z(k)-2|y(k)-y(k+1)|,z(k+1),...,),
\end{equation}
\begin{equation}
\label{xk}
x^k = z^{n_{2k}}
\end{equation}
and
\begin{equation}
\label{wk}
w^k = z^{n_{2k+1}}
\end{equation}
Without loss of generality we can assume that $ y(n_{2k}) > y(n_{2k}+1).$
Now we claim that $-y \in P_Y(x^k)$ and $ ay \notin P_Y(x^k)$ for $ a >- 1.$
Notice that by (\ref{zj})
$$
x^k(j) + y(j) = z^{n_{2k}}(j) + y(j) \geq 0
$$ 
for $ j \neq n_{2k}.$
By definition of $z,$ for $j < n_{2k}-1$ and $j\geq n_{2k}+1$
$$
x^k(j) + y(j)= z^{n_{2k}}(j) + y(j) \geq z^{n_{2k}}(j+1) + y(j+1) = x^k(j+1)+ y(j+1).
$$ 
Moreover, we have
\begin{align*}
x^k(n_{2k}-1) + y(n_{2k}-1) &= z^{n_{2k}}(n_{2k}-1) + y(n_{2k}-1) = z(n_{2k}-1)+y(n_{2k}-1)\\
&\geq z(n_{2k}) + y(n_{2k}) \geq z^{n_{2k}}(n_{2k}) + y(n_{2k}) \\
&= x^k(n_{2k}) + y(n_{2k}).
\end{align*}
Finally, notice that for any $ a \in [-1,1]$
\begin{align*}
x^{k}(n_{2k}) + ay({n_{2k}})&=z^{n_{2k}}(n_{2k}) + ay(n_{2k})\geq z^{n_{2k}}(n_{2k}+1) + ay(n_{2k}+1)\\
&=x^{k}(n_{2k}+1) + ay(n_{2k}+1)
\end{align*}
if and only if
$$
z(n_{2k}+1) - |y(n_{2k}+1)-y(n_{2k})| + ay(n_{2k}) \geq z(n_{2k}+1) +ay(n_{2k}+1).
$$
Since $ y(n_{2k}) > y(n_{2k}+1)$ the last inequality is satisfied only for $ a=1.$
Hence, 
$$
\| x^{k} + y\|_{w,1}   =   \| z^{n_{2k}} + y\|_{w,1} = \sum_{j=1}^{\infty} (z^{n_{2k}}(j) - (-y(j)))w(j).
$$
Since $ \sum_{j=1}^{\infty}y(j)w(j)=0,$ by Theorem \ref{classical} we get $-y \in P_Y(z^{n_{2k}}),$ as required.
By the above calculations, for any $ a \in [-1,1)$ we conclude
$$ 
x^{k}(n_{2k}) + ay_{n_{2k}} < x^{k}(n_{2k}+1) + ay(n_{2k}+1).
$$ 
Since $w$ is strictly decreasing, for any $a \in [-1,1)$ we have
\begin{align*}
dist(x^k,Y)=&\|x^k +y\|=\sum_{j=1}^{\infty}(x^k(j) + y(j))w(j)=\sum_{j=1}^{\infty}(x^k(j)+ay(j))w(j)\\
<&\sum_{j=1}^{n_{2k}-1}(x^k(j) + ay(j))w(j) + w(n_{2k})(x^{k}(n_{2k}+1) + ay(n_{2k}+1))\\
&+w(n_{2k}+1)(x^{k}(n_{2k})+ay(n_{2k})) + \sum_{j=n_k+2}^{\infty}(x^k(j)+ay(j))w(j)\\
\leq&\|x^k+ay\|_{w,1},
\end{align*}
which shows our claim.
Consequently, for any $ k \in \mathbb{N},$ $ P_Y(x^k)\subset I_1y=[m,-1]y$ for some $ m <-1.$ 
\newline
Since $ y(n_{2k}) > y(n_{2k}+1),$ by (\ref{strict}),  reasoning exactly in the same way, we can show that  for any $ k \in \mathbb{N},$ $ P_Y(w^k) \subset I_2y=[1,p]y$ for some $ p >1.$ By (\ref{c0}), $\|x^k-z\|_{w,1}\rightarrow 0$ and $\|w^k-z\|_{w,1}\rightarrow 0$ as $k\rightarrow\infty$. Since $ I_1 \cap I_2 = \emptyset,$ we get our claim.
\newline
Now assume that there exists $n_o\in\mathbb{N}$ such that either ${y(n)\geq{y(n+1)}}$ for any $n\geq{n_o}$ or $-y(n)\geq{-y(n+1)}$ for any $n\geq{n_o}$. Fix $ x=x^* \in d(w,1)$ with $P_Y(x) = [-1,1]y$
and $(x^n)\subset{d(w,1)}$ with $\norm{x^n-x}{d(w,1)}{}\rightarrow{0}$.  
Without loss of generality, replacing $y$ by $ -y$ if necessary, we can assume that for $ n \geq n_o$ 
\begin{equation}
\label{case1}
y(n)\geq y(n+1).
\end{equation} 
To get our claim, it is enough show that for any compact interval $I\subset\mathbb{R}$ such that $P_Y(x_n)\subset{Iy}$ for every $n\in\mathbb{N},$ $-1 \in I.$
First suppose that 
\begin{equation} 
\label{strict1}
y(n) \neq y(n+1) 
\hbox{ for any }  n \in \mathbb{N}.
\end{equation}
Let  $P_Y(x_n)=[a_n,b_n]y$ where $a_n \leq b_n$. Let $b=\liminf_{n\in\mathbb{N}}\{a_n\}$. We can assume without loss of generality, that $a_n$ converges to $b$. Since $ P_Y(x) = [-1,1]y,$ $|b|\leq 1$.
We show that $b=-1.$ Assume on the contrary that $ b > -1.$ 
Since $a_ny\in{}P_Y(x^n)$, there exists a supporting functional $f^n \in S_{d^*(w,1)}$ such that 
\begin{equation}
\label{equ:**:cs}
f^n(x^n-a_ny)=\norm{x^n-a_ny}{d(w,1)}{}=\dist(x^n,Y)
\end{equation}
and also 
\begin{equation}
\label{equ:4:cs}
\sum_{i=1}^\infty{f^n(i)y(i)}=0.
\end{equation}
By the Banach-Alaoglu Theorem the set $ \{ f^n: n \in \mathbb{N} \}$  has a cluster point $f \in d^*(w,1)$ with respect to the weak$^*$ topology in $d^*(w,1)$ and $ \|f\|_W \leq 1.$
Now assume that there exists a subsequece $ \{n_k\} $ such that for any $k$, $f^{n_k}=f=w.$ 
Hence, since $ w$ is strictly decreasing, for $ k \in \mathbb{N},$ and $ j \in \mathbb{N} $ 
$$
x^{n_k}(j) - a_{n_k}y(j) = (x^{n_k}(j) - a_{n_k}y(j))^*.
$$
Fix $ m \in (-1,b).$ Then  $ a_{n_k} - m >0$ for $k \geq k_o.$  Hence by (\ref{strict1}) for $ j \geq n_o $ and $ k \geq k_o,$ 
$$
(a_{n_k} -m) y(j) > (a_{n_k} -m) y(j+1)
$$
and consequently, 
$$
x^{n_k}(j) - my(j) > (x^{n_k}(j+1) - my(j+1)) >0. 
$$
Since $ m \in (-1,b)$ and $ |b| \leq 1,$ by (\ref{ineq2}), 
$$
x^{n_k}(j) - my(j) > (x^{n_k}(j+1) - my(j+1)) >0. 
$$
for $ k \geq k_1$ and $ j=1,2,...n_o.$
Since $ \sum_{j=1}^{\infty} w(j)y(j) =0,$ by Theorem \ref{classical}, $my \in P_Y(x^{n_k})$ for $ k \geq k_o.$ Since $ m <b, $ we get a contradiction with defintion of $b.$
\newline
So to end the proof under assumption (\ref{strict1}), we construct a subsequence $ \{n_k\}$ such that for any $k$ $ f^{n_k} = f =w.$ Since $ f$ is a cluster point of $ \{ f^n: n \in \mathbb{N} \}$ with respect to the weak$^*$ topology in $d^*(w,1),$ applying the diagonal argument, we can choose a subsequence $ \{n_k\}$ such that $ f^{n_k}(j) \rightarrow f(j)$ for $ j \in \mathbb{N},$ $f^{n_k}(y) \rightarrow f(y)$ and $ f_{n_k}(x) \rightarrow f(x).$
Since $ f^n(y) =0, $ for any $ n \in \mathbb{N},$ $f(y) =0.$ Moreover, since $\| x^n- x\|_{w,1} \rightarrow 0,$ and $ a_n \rightarrow b,$ by (\ref{equ:**:cs})
\begin{align*}
|f^{n_k}(x^{n_k}-a_{n_k}y) - f(x- by)|&\leq |f^{n_k}(x^{n_k}-a_{n_k}y-(x-by))|+|(f-f^{n_k})(x- by)|\\
&\leq \|x^{n_k}-a_{n_k}y-(x-by)\|_{w,1} + |f-f^{n_k}(x)|.
\end{align*}
Consequently, 
$$ 
\lim_k f^{n_k}(x^{n_k}-a_{n_k}y) - f(x- by) =  \lim_k dist(x^{n_k},Y) - f(x-by)=0.
$$ 
Hence
$$
f(x)= f(x-by) = \|x-by\|_{w,1} = dist(x,Y) = \|x\|_{w,1} 
$$
By (\ref{case1}) and \ref{ineq2}), $ x(j) > x(j+1) $ for any $j,$ which means that $x$ has the only one supporting fuctional $w.$ Hence $ f =w.$ By Theorem \ref{singer}, we can assume that there are $g^k$ and $h^k$ extreme functionals of $S_{d(w,1)^*}$ such that $f^{n_k}=\alpha_n g^{k}+(1-\alpha_k)h^{k}$ for some $\alpha_k \in [0,1]$. By Theorem \ref{hkl}, 
$(h^{k})^* = (g^{k})^* =w.$ Since $ w$ is strictly decreasing and  $ f^{n_k}(j) \rightarrow f(j)=w(j)$ for $ j =1,...,n_o,$ $ g^k(j)\rightarrow w(j) $ and $h^k(j) \rightarrow  w(j)$ for $ j=1,...,n_o+1.$ Hence 
$g^{k}(j) = w(j)$ and $h^k(j) = w(j)$ for $j=1,...n_o+1$ and $ k \geq k_o.$ Consequently, for $ k \geq k_o,$ $ f^{n_k}(j) = w(j). $ 
Moreover, since $\sum_{i=1}^{\infty}w(i)y(i)=0$ and by condition (\ref{equ:4:cs}) we conclude for $ k \in \mathbb{N}$
\begin{equation*}
\sum_{i=n_o+1}^{\infty}f^{n_k}(i)y(i)=\sum_{i=n_o+1}^{\infty} w(i)y(i).
\end{equation*}
Notice that $ |f^{n_k}(j)| \leq w(j)$ for any $k.$
By (\ref{case1}) and (\ref{strict1}), we get that $ w(j) = f^{n_k}(j) $ for $ j \geq n_o +1 $ and $ k \in \mathbb{N}$ as required.
\newline
Now assume that (\ref{strict1}) is not satisfied and 
\begin{equation} 
\label{nonzero}
y(n) > 0 \hbox{ for } n \geq n_o.
\end{equation}
By (\ref{ineq1}),
\begin{equation}
\label{general1}
 x(n) = x^*(n) >0 \hbox{ for } n \in \mathbb{N}.
 \end{equation} 
The proof goes in a similar way like under assumption (\ref{strict1}), so we indicate only necessary modifications. Also the same notation will be used.
Ressoning as above, we can choose a subsequence $ \{n_k\}$ such that $ f^{n_k}(j) \rightarrow f(j)$ for $ j \in \mathbb{N},$   $f^{n_k}(y) \rightarrow f(y),$ $ f_{n_k}(x) \rightarrow f(x)$ and 
$$
f(x) = \| x\|_{w,1} = \sum_{j=1}^{\infty} x(j)w(j).
$$
Let $ j_1 = 1 $ and for $ n \geq 2$
$$ 
j_n =\min \{ j > j_{n-1}: x(j_{n-1}) > x(j)\}.
$$ 
Since $ x \in d(w,1) \subset c_o,$ by (\ref{general1}) our definition is correct. Fix $ n_1$ such that $ j_{n_1} > n_o.$
Reasoning like under assumption (\ref{strict1}), we can show that for $k$ sufficiently large $ f^{n_k}(j_l) = w(j_l) $ for $ 2 \leq l \leq n_1.$
Moreover, if $ l \in \{2,...,n_1\}$ is such that 
$$ 
j_l+ 1 < j_{l+1},
$$ 
then for any $j \in \{j_l+1, j_{l+1}-1\} $ and $k$ sufficiently large 
$$
f^{n_k}(j) \in \{ w(j_{l}+1),..., w(j_{l+1}-1)\}.
$$ 
Also if $ l=1,$ 
then $f^{n_k}(j) \in \{ w(1),..., w(j_{2}-1)\}$ for $j=1,...,j_2-1.$
Since 
$$
f^{n_k}(y) = \sum_{j=1}^{\infty} f^{n_k}(j)y(j) = w(y) =\sum_{j=1}^{\infty} w(j)y(j) =0,
$$
by the above reasoning,
$$
\sum_{j=j_{n_1}}^{\infty} f^{n_k}(j)y(j) =\sum_{j=j_{n_1}}^{\infty} w(j)y(j). 
$$
Observe that for any $ j,k \in \mathbb{N},$ $ |f^{n_k}(j) | \leq w(j).$ Hence by (\ref{nonzero}) and (\ref{case1}) $ f^{n_k}(j) = w(j)$ for $ j > j_{n_1}$
Moreover, since for $k$ sufficiently large $ f^{n_k}(j_l)= f(j_l) = w(j_l)$ for $l=1,...,n_1,$ if $j \in \{j_l+1, j_{l+1}-1\} $ then 
$$
f^{n_k}(j) =f(j) \in \{ w(k_{l}+1),..., w(k_{l+1}-1)\}.
$$ 
(compare with the proof under assumption (\ref{strict1})). Observe that, by (\ref{ineq2}), if $ x(j) = x(j+1),$ then $ y(j) = y(j+1).$ Since $ m \in (-1,b),$
$ f^{n_k}(y) =0$ and $ f^{n_k}(x^{n_k} - my) = \|x^{n_k} - my\|_{w,1}$ for $ k$ sufficiently large. By Theorem \ref{classical}, $my \in P_Y(x^{n_k})$ for $ k \geq k_o.$ Since $ m <b, $ we get a contradiction with defintion of $b,$ which finishes the proof under assumption (\ref{nonzero}).
If $ y(n) =0$ for some $n \geq n_o,$ then by (\ref{case1}), $y(j) = 0 $ for $j \geq n.$ Hence for any $ k \in \mathbb{N},$ and $j \geq n$
$$
f^{n_k}(j)(x(j)-my(j)) =f^{n_k}(j)(x(j)-a_{n_k}y(j)).
$$
Hence reasoning as under assumption (\ref{nonzero}) we get that $my \in P_Y(x^{n_k})$ for $ k \geq k_o,$ which completes the proof.
\end{proof}

\begin{thm}
\label{Czeb}
Assume that $w$ is a strictly decreasing weight.
Let 
$$ 
y =(y(1),...y(n),...) \in d(w,1) \setminus \{0\} 
$$ 
and let $ Y = span[y].$ Then $Y$ is not a Chebyshev subspace in $d(w,1)$ if and only if there exists $ M \subset \mathbb{N},$ $card(M)= \infty$ such that $supp(y) \subset M$ and a bijection $ p: \mathbb{N} \rightarrow M,$ $\sigma \in \{ -1,1\}^{\mathbb{N}}$ such that
\begin{equation}
\label{cheb1}
\sum_{j=1}^{\infty} w(j) \sigma(j)y(p(j)) =0,
\sum_{j=1}^{\infty}|\sigma(j)y(p(j))-\sigma(j+1)y(p(j+1))| < \infty
\end{equation}
and 
\begin{equation}
 \label{cheb2}
z=(z(1),z(2),...) \in d(w,1),
\end{equation}
where for $j \in \mathbb{N}$
\begin{equation}
 \label{cheb3}
z(j) = \sum_{l=j}^{\infty} |\sigma(l)y(p(l))-\sigma(l+1)y(p(l+1))|.
\end{equation}
\end{thm}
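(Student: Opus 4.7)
The plan is to establish both implications by reducing to Lemma~\ref{selection} after conjugating by a norm-preserving rearrangement isometry of $d(w,1)$ of the form $U(u)(j):=\sigma(j)u(\pi(j))$, with $\pi$ a permutation of $\mathbb{N}$ and $\sigma\in\{-1,1\}^{\mathbb{N}}$.

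For the sufficiency $(\Leftarrow)$, I would start from the given data and set $\tilde{y}(j):=\sigma(j)y(p(j))$. The first observation is that $\tilde{y}\in c_0$ (since $y\in d(w,1)\subset c_0$ and $p(j)\to\infty$), so telescoping the defining series (\ref{cheb3}) gives $z(j)\geq|\tilde{y}(j)|$. Next, define $x\in d(w,1)$ by $x(k):=\sigma(p^{-1}(k))z(p^{-1}(k))$ for $k\in M$ and $x(k):=0$ elsewhere, which yields $x^*=z$. Using $\mathrm{supp}(y)\subset M$, the sequence $(x+\alpha y)$ is supported in $M$ for any $\alpha\in[-1,1]$, and a pointwise computation gives $|(x+\alpha y)(p(j))|=z(j)+\alpha\tilde{y}(j)\geq 0$ (by the bound $z\geq|\tilde{y}|$); moreover, (\ref{cheb3}) forces $(z+\alpha\tilde{y})(j)$ to be decreasing in $j$, so $(x+\alpha y)^*=z+\alpha\tilde{y}$. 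Combining this with (\ref{cheb1}) produces
\begin{equation*}
\|x+\alpha y\|_{w,1}=\sum_{j=1}^{\infty}w(j)\bigl(z(j)+\alpha\tilde{y}(j)\bigr)=\|z\|_{w,1}
\end{equation*}
for every $\alpha\in[-1,1]$, and convexity of $\alpha\mapsto\|x+\alpha y\|_{w,1}$ then forces this plateau to be the global minimum, so $P_Y(x)\supset[-1,1]y$ and $Y$ is not Chebyshev.

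For the necessity $(\Rightarrow)$, assume $Y$ is not Chebyshev and pick $x_0$ with $P_Y(x_0)=[a,b]y$, $a<b$. Replacing $x_0$ by $x:=x_0-\tfrac{a+b}{2}y$ and the generator $y$ of $Y$ by $y_1:=\tfrac{b-a}{2}y$, we have $P_Y(x)=[-1,1]y_1$. Choosing $\pi$ to sort $|x|$ non-increasingly and $\sigma(j):=\mathrm{sgn}(x(\pi(j)))$ (extended arbitrarily to $\pm 1$ where $x(\pi(j))=0$), the induced isometry $U$ satisfies $\tilde{x}:=Ux=x^*$ and $P_{U(Y)}(\tilde{x})=[-1,1]\tilde{y}$ for $\tilde{y}:=Uy_1$. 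The Hardy--Littlewood rearrangement inequality yields, for every $\alpha\in[-1,1]$,
\begin{equation*}
\|\tilde{x}\|_{w,1}=\|\tilde{x}+\alpha\tilde{y}\|_{w,1}\geq\sum_{j}w(j)\bigl(\tilde{x}(j)+\alpha\tilde{y}(j)\bigr)=\|\tilde{x}\|_{w,1}+\alpha\sum_{j}w(j)\tilde{y}(j),
\end{equation*}
which forces $\sum_j w(j)\tilde{y}(j)=0$, i.e.\ $U(Y)\subset\ker(w)$. At this stage Lemma~\ref{selection} applies to $(\tilde{x},\tilde{y})$ and delivers $z\in d(w,1)$ with $z(j)=\sum_{l\geq j}|\tilde{y}(l+1)-\tilde{y}(l)|$ and $z(j)\geq|\tilde{y}(j)|$. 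Taking $M=\mathbb{N}$, $p=\pi$, and $\sigma$ as chosen, and noting that $\tilde{y}$ and $\sigma(\cdot)y(\pi(\cdot))$ differ only by the positive scalar $\tfrac{b-a}{2}$, under which the scale-invariant conditions (\ref{cheb1})--(\ref{cheb3}) are preserved, completes the argument.

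The hardest step will be the rigidity statement $\sum_j w(j)\tilde{y}(j)=0$ in the necessity direction. The section's running hypothesis only gives $Y\subset\ker(f)$ for some extreme $f$ with $f^*=w$, whereas what is actually needed after the rearrangement is that the specific functional $w$ itself annihilate $U(Y)$; the equality case in Hardy--Littlewood combined with the strict monotonicity of $w$ delivers exactly this. The remaining complication is the bookkeeping of $\pi$ and $\sigma$ so that the reindexed quantities match the format of (\ref{cheb1})--(\ref{cheb3}).
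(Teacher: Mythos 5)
Your sufficiency direction is essentially the paper's argument and is correct: you transport $z$ onto $M$ via the signed permutation, check $z\geq|\tilde{y}|$ and the monotonicity of $z+\alpha\tilde{y}$, and conclude $\|x+\alpha y\|_{w,1}=\|z\|_{w,1}$ on $[-1,1]$ (the paper invokes the duality characterization of best approximants with the functional $w$ instead of your convexity/plateau argument, but that is a cosmetic difference).

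The necessity direction has a genuine gap, and it sits exactly where you wave at ``bookkeeping.'' You define $\pi$ as a bijection of $\mathbb{N}$ sorting $|x|$ non-increasingly and take $M=\mathbb{N}$. Such a bijection does not exist whenever $x$ has infinitely many nonzero entries \emph{and} at least one zero entry: then $x^*(j)>0$ for all $j$, yet $\pi$ must eventually hit an index where $x$ vanishes, so $|x(\pi(\cdot))|$ cannot equal $x^*$. This situation actually occurs (e.g.\ conjugate any witnessing pair $(x,y)$ by the isometry that moves supports onto the even integers), so the proof does not merely need polishing at a degenerate case. The natural repair, taking $M=\supp(x)$ and $p:\mathbb{N}\rightarrow M$ the sorting bijection, requires proving $\supp(y)\subset\supp(x)$ --- which is a substantive claim, not an observation, and is precisely why the theorem is stated with a general infinite $M$ rather than $M=\mathbb{N}$. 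The paper resolves this differently: it picks an extreme norming functional $f$ with $f(y)=0$ and $f^*=w$ (via Theorem \ref{classical}, Lemma \ref{Izbior} and Theorem \ref{hkl}), sets $M=\supp(f)$, and proves $\supp(x)\cup\supp(y)\subset M$ by a strict Hardy--Littlewood-type inequality (swapping an index of $M$ for a hypothetical index of $\supp(y)\setminus M$ strictly increases $f(x-y)$ beyond $\|x-y\|_{w,1}$); the bijection $p$ is then defined by $|f(p(j))|=w(j)$, which is legitimate because $w$ is strictly positive and strictly decreasing. By contrast, your Hardy--Littlewood rigidity step yielding $\sum_j w(j)\tilde{y}(j)=0$, which you flag as the hardest point, is fine as written --- once the isometry $U$ actually exists.
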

\begin{proof}
Since $Y$ is not a Chebyshev subspace, there exists $x \in d(w,1)$ such that $ P_Y(x) =[-1,1]y$. By Theorem \ref{classical}, there exists $ f \in S_{d(w,1)^*}$ such that 
$f(x) = \|x\|_{w,1} $ and $ f(y)=0.$ By Lemma \ref{Izbior}, $ f \in ext(S_{d(w,1)^*}).$ By Theorem \ref{hkl}, $ f^* =w.$ Put $M = supp(f).$ Since $w$ is strictly decreasing, $M$ is infinite.
Now we show that $ supp(y) \subset M$ and $ supp(x) \subset M.$  Assume to the contrary that $ y(i) \neq 0$ for some $ i \notin M.$ Then, $ |x(i) - y(i)| > 0$ or  $ |x(i) + y(i)| > 0.$ Assume that  $ |x(i) - y(i)| > 0.$
Since $ x-y \in c_o,$ $ |x(j) -y(j)| < |x(i) -y(i)| $ for $ j \geq j_o.$ Fix $j_1 \in M, $ $j_1 \geq j_o.$ Since $ P_Y(x) =[-1,1]y,$ and $ i \notin M,$
\begin{align*}
\| x-y\|_{w,1} &= f(x-y) = \sum_{j \in M} f(j)x(j)\\
&=\sum_{j=1}^{j_1-1} f(j) (x(j)-y(j)) +  \sum_{j =j_1}^{\infty}  f(j) (x(j)-y(j))\\
&<\sum_{j=1}^{j_1-1}f(j)(x(j)-y(j))+|f(j_1)||x(i)-y(i)|+\sum_{j=j_1+1}^{\infty}f(j)(x(j)-y(j)) \\
&\leq\|x-y\|_{w,1},
\end{align*}
so a contradiction. Since $ f(x) = \|x \|_{w,1},$ reasoning in the same way, we get that $ supp(x) \subset M.$ Since $ supp(y) \subset M,$ and $supp(x) \subset M,$ without loss of generality we can assume that $M = \mathbb{N}.$ Let $p: \mathbb{N} \rightarrow M$ be defined by $ |f(p(j))| = w(j).$ Since $w$ is strictly decreasing and
$f^*=w,$ our defintion is correct. Let $ \sigma(j) = sgn f(p(j)).$  Applying Lemma \ref{selection} to $ y^1 = (\sigma(j)y(p(j)))_{j \in \mathbb{N}}$ and $ x^1 = (\sigma(j)x(p(j)))_{j \in \mathbb{N}}$
we get (\ref{cheb1} - \ref{cheb3}).
\newline
Now assume that (\ref{cheb1} - \ref{cheb3}) are satisfied.
Observe that for any $j \in \mathbb{N},$
$$
z(j) - z(j+1) = |\sigma(j)y(p(j))-\sigma(j+1)y(p(j+1))|.
$$
Hence for any $ j \in \mathbb{N},$ 
$$
z(j) \pm \sigma(j)y(p(j)) \geq z(j+1) \pm \sigma(j+1)y(p(j+1))\geq{0}.
$$
In consequence, by Theorem \ref{classical}, $ P_{Y_1}(z) = [-1,1]y^1$. Notice that a mapping $ 
T: d_{w,1} \rightarrow d_{w,1} $ defined by 
\begin{equation}
\label{isometry}
Tu = (\sigma(j) u(a(j))): j \in \mathbb{N},
\end{equation}
where $ a= p^{-1},$
is a linear, surjective isometry. Define $ x \in d(w,1),$ by $ x(j) = \sigma(j)z(a(j)),$ for $j \in M.$ Observe that $ Tz = x$ and $Ty^1 =y.$ By Lemma \ref{general} applied to $T$
and (\ref{cheb1} - \ref{cheb3}), $ P_Y(x) =[-1,1]y, $ as required. 
\end{proof}
Now we prove the main result of this section.
\begin{thm}
\label{selgen}
Assume that $w$ is a strictly decreasing weight.
Let 
$$ 
y =(y(1),...y(n),...) \in d(w,1) \setminus \{0\} 
$$ 
and let $ Y = span[y].$ Then $Y$ admits a continuous metric selection if and only if $Y$ is a Chebyshev subspace or for any $p$ and $M$ satisfying the requirements of Theorem \ref{Czeb} there exists
$n_o \in \mathbb{N}$ such that for $ n \geq n_o$ 
\begin{equation}
\label{final}
y(p(n)) \geq y(p(n+1))
\end{equation}
or for $ n \geq n_o$
\begin{equation}
\label{final1}
-y(p(n)) \geq -y(p(n+1)).
\end{equation}
\end{thm}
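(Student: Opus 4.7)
The plan is to combine Theorem \ref{deutsch} with Lemma \ref{selection} via the normalising isometry $T$ from (\ref{isometry}) and Lemma \ref{general}. I would first dispose of the easy case: if $Y$ is Chebyshev then Lemma \ref{Czebyszew} already gives that $P_Y$ is singleton-valued and continuous, so $P_Y$ itself is a continuous metric selection. Henceforth assume $Y$ is not Chebyshev; Theorem \ref{Czeb} then yields $M$, a bijection $p:\mathbb{N}\to M$, a sign sequence $\sigma$, and $z\in d(w,1)$ satisfying (\ref{cheb1})--(\ref{cheb3}), together with (through the isometry $T$ of (\ref{isometry})) an element $x\in d(w,1)$ whose signed-rearranged copy $x^1$ satisfies $P_{Y^1}(x^1)=[-1,1]y^1$, where $y^1=(\sigma(j)y(p(j)))_j$ and $Y^1=\operatorname{span}[y^1]$. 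By Lemma \ref{general}, $Y$ admits a continuous metric selection iff $Y^1$ does, so I can work with the normalised data $(y^1,x^1)$ whenever convenient.

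For ($\Rightarrow$), I would argue by contrapositive: suppose $Y$ is not Chebyshev and that the monotonicity (\ref{final})/(\ref{final1}) fails for some $(p,M)$ admissible in Theorem \ref{Czeb}. Then neither $(y(p(n)))_n$ is eventually non-decreasing nor eventually non-increasing, and after accounting for the sign sequence $\sigma$ produced jointly with $p$, condition (\ref{strict}) of Lemma \ref{selection} holds for $y^1$. The first part of that lemma then produces disjoint compact intervals $I_1,I_2$ and sequences $(x^k),(w^k)$ converging to $z$ with $P_{Y^1}(x^k)\subset I_1$ and $P_{Y^1}(w^k)\subset I_2$; pushing these through $T^{-1}$ yields the same configuration for $Y$, so Theorem \ref{deutsch} prohibits a continuous metric selection onto $Y$.

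For ($\Leftarrow$), I would argue by contradiction. Assume the monotonicity holds for every admissible $(p,M)$, and suppose $Y$ admits no continuous metric selection. Theorem \ref{deutsch} then furnishes $x\in X$ with $0\in P_Y(x)$, disjoint compact intervals $I_1,I_2$, and sequences $(x_n),(w_n)\to x$ with $P_Y(x_n)\subset I_1 y$, $P_Y(w_n)\subset I_2 y$. If $P_Y(x)=\{0\}$, Lemma \ref{Czebyszew} would force best approximants from both sequences to converge to $0$, placing $0\in I_1\cap I_2$ and contradicting disjointness; hence $P_Y(x)=[a,b]y$ with $a<b$. An affine change ($x\mapsto x-\tfrac{a+b}{2}y$ plus rescaling $y$) reduces the situation to $P_Y(x)=[-1,1]y$. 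I then apply Theorem \ref{Czeb} to this $x$ to extract the associated $(p,M,\sigma)$, whence the standing hypothesis guarantees that (\ref{strict}) fails for the corresponding $y^1$. Transferring to the sorted, sign-aligned representative via $T$ and invoking the second assertion of Lemma \ref{selection} forces $I_1\cap I_2\neq\emptyset$, the desired contradiction.

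The main technical obstacle is making precise the correspondence between the hypothesis of Theorem \ref{selgen}, phrased in terms of $(y(p(n)))_n$, and the strict alternation (\ref{strict}) which Lemma \ref{selection} negates, phrased in terms of the signed sequence $(\sigma(n)y(p(n)))_n$: one must verify that the interplay between $p$, $M$ and the sign data $\sigma$ arising from the choice of extreme supporting functional $f$ in Theorem \ref{Czeb} is such that the two monotonicity statements are genuinely equivalent in this context. A secondary delicate point is checking that the affine reduction to $P_Y(x)=[-1,1]y$ respects both the hypothesis of the theorem and the applicability of Lemma \ref{selection}.
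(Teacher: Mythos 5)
Your proposal follows essentially the same route as the paper: Lemma \ref{Czebyszew} for the Chebyshev case, the two halves of Lemma \ref{selection} combined with Theorem \ref{deutsch} for the dichotomy, and the isometry (\ref{isometry}) together with Lemma \ref{general} to reduce to the sorted, sign-aligned representative, the only difference being that the paper proves the sufficiency direction directly (showing any two capturing intervals must meet) rather than by contradiction. The sign issue you single out as the main technical obstacle is handled in exactly the same implicit way in the paper, which applies (\ref{final}) to $y^1=(\sigma(j)y(p(j)))_j$ without further comment, so it is not a gap relative to the paper's own argument.
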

\begin{proof}
If $Y$ is a Chebyshev subspace then by Lemma \ref{Czebyszew}  $Y$ admits a continuous metric selection. If $Y$ is not a Chebyshev subspace, fix $ x \in d{(w,1)}$ such that 
$ P_Y(x) = [a,b]y,$ where $ a <0<b.$  Assume that (\ref{final}) or (\ref{final1}) is satisfied. We show that for arbitrary sequences $w^k \rightarrow x$ and  $x^k \rightarrow x$ if $I_1 $ and $I_2$ are compact intervals
such that $P_Y(w^k) \subset I_1y$ for any $ k \in \mathbb{N}$ and  $P_Y(x^k) \subset I_2y$ for any $ k \in \mathbb{N}$ then $I_1 \cap I_2 \neq \emptyset.$ Without loss of generality we can assume that $ a=-1$ and $b=1.$ Observe that, by Theorem \ref{classical}, there exists $ f \in S_{d^*(w,1)}$ such that 
$f(x) = \|x\|_{w,1} $ and $ f(y)=0.$ By Lemma \ref{Izbior}, $f \in ext(S_{d(w,1)^*}).$ By Theorem \ref{hkl}, $ f^* =w.$ If $f =w,$ then the mapping $p: \mathbb{N} \rightarrow M$ defined in the proof of Theorem \ref{Czeb} is the identity on $ \mathbb{N}.$ Without loss of generality we can assume that (\ref{final}) is satisfied. Hence for $n \geq n_o,$ $y(n) \geq y(n+1). $ By Lemma \ref{selection} $-y \in I_1y \cap I_2y, $ 
which shows our claim. If $ f \neq w,$ we reduce our proof to the case $f=w.$ To do that, put $ M = supp(f).$ Since $ w$ is strictly decreasing, $M$ is infinite. Reasoning as in the proof of Theorem \ref{Czeb} we can show that $ supp(y) \subset M$ and 
$ supp(x) \subset M.$  Since $f^*=w$ it follows that $M = \mathbb{N}.$ Let $p: \mathbb{N} \rightarrow M$ be defined by $ |f(p(j))| = w(j)$ and let $ \sigma(j) = sgn f(p(j)).$  (Compare with the proof of Theorem \ref{Czeb}). Define $ y^1 = (\sigma(j)y(p(j))_{j \in \mathbb{N}},$ $ Y^1 = span[y^1]$ and $ x^1 \in d(w,1),$ by $ x^1(j) = \sigma(j)x(p(j)),$ for $j \in M,$ where $ a= p^{-1}.$ By Lemma \ref{selection} and by (\ref{final}) applied to 
$x^1$ and $ y^1$ for any sequence $z^k \in d(w,1)$ converging to $x^1$ and a compact interval $ I$ such that $ P_{Y^1}(z^k) \subset I y^1,$ for any $ k \in \mathbb{N},$ $ -1 \in I. $ Observe that 
$Tx^1=x$ and $Ty^1=y,$ where $T$ is defined by (\ref{isometry}). 
By Lemma \ref{general}, apllied to $T$, $x^1$ and $ y^1$ for any sequence $x^k \in d(w,1)$ converging to $x$ and a compact interval $ I$ such that $ P_{Y}(x^k) \subset I y^1,$ for any $ k \in \mathbb{N},$ we get $ -1 \in I, $ which gives our claim. 
\newline
To prove the converse, assume that there are $p$ and $M$ satisfying the requirements of Theorem \ref{Czeb} such that (\ref{final}) and (\ref{final1}) are not satisfied. First suppose that $ p = id_{\mathbb{N}}.$ 
This implies that $y$ satisfies (\ref{strict}) for some subsequence $ \{ n_k\}.$ Let $ z \in d(w,1)$ be defined by (\ref{cheb2}) and (\ref{cheb3}). By the proof of Lemma \ref{selection} there exist two sequences $ x^k$ and $w^k$
converging to $z$ such that there exist compact intervals $ I_1,$ $ I_2,$ with $ I_1 \cap I_2 = \emptyset$ having the property that for any $ k \in \mathbb{N}$ $P_Y(x^k) \subset I_1$ and $ P_Y(w^k) \subset I_2.$
By Theorem \ref{deutsch}, $Y$ does not admit the continuous metric selection.
\newline
Applying the isometry $T$ defined by (\ref{isometry}) and Lemma \ref{general}, we can reduce the proof in general case to the above reasoning, which gives our claim.
\end{proof}
Now we show four examples illustrating possible applications of Theorem \ref{selgen}.
\begin{exam}
Let $ 1= w(1) > w(2)> ..$ be a strictly decreasing weight such that $ \sum_{n=1}^{\infty}w(n) = +\infty .$ Let $ y=(1, y(2),0,0...)$ be so chosen $ y(2) <0$  and $ y(2) \neq \frac{-w(i)}{w(j)}$ for any $ i,j \in \mathbb{N},$ $i \neq j.$ Since $ \mathbb{R} $ is not countable, such an $y(2)$ exists. By Theorem \ref{hkl} for any $f \in ext(S(d^*(w,1)))$, $f(y) \neq 0.$ Hence, by Lemma \ref{Izbior}  
any $ x \in d(w,1)$ possesses strongly unique best approximation in $ Y = span[y].$ By the Freud Theorem (see \cite{Chen}, p. 82) the projection operator (which is single-valueds in this case) satisfies the local Lipschitz condition.
\end{exam}
\begin{exam}
 Let $w(1)=1$, $ w(n) = \frac{1}{n} ,$ $y(n) = \frac{1}{n},$ for $n \geq 2$  and $ y(1) = - \sum_{n=2}^{\infty}y(n)w(n).$  Observe that in this case $ \sum_{n=1}^{\infty} |y(n+1) - y(n)| < \infty . $
By Theorem \ref{Czeb}, $Y$ is not a Chebyshev subspace of $d(w,1).$ By Theorem \ref{selgen},  $Y$ admits a continuous metric selection. 
\end{exam}
\begin{exam}\label{no-cont-sele}
Let $ w(n) = \frac{1}{n} $ and for $ k \geq 1$ $y(2k) = \frac{1}{(2k+1)^2},$ $y(2k+1) = \frac{1}{(2k)^2}$  and $ y(1) = - \sum_{n=2}^{\infty}y(n)w(n).$  Observe that in this case $ \sum_{n=1}^{\infty} |y(n+1) - y(n)| < \infty . $
By Theorem \ref{Czeb} $Y$ is not a Chebyshev subspace of $d(w,1).$ By Theorem \ref{selgen}  $Y$ does not admit a continuous metric selection. 
\end{exam}

\begin{exam}
Now we consider Example \ref{no-cont-sele} with $y(1)=-\sum_{n=2}^{\infty}y(p(n))w(n)$, where $p:\mathbb{N}\rightarrow\mathbb{N}$ is a function given by $p(2k)=2k+1$ and $p(2k+1)=2k$ for any $k\in\mathbb{N}$. Notice that $\sum_{n=1}^{\infty}|y(p(n+1))-y(p(n))|<\infty.$
By Theorem \ref{Czeb} $Y$ is not a Chebyshev subspace of $d(w,1).$ On the other hand, by Theorem \ref{selgen} $Y$ admits a continuous metric selection. 
\end{exam}

$\begin{array}{lr}
\textnormal{\small Maciej CIESIELSKI} & \textnormal{\small Grzegorz Lewicki}\\
\textnormal{\small Institute of Mathematics} & \textnormal{\small Department of Mathematics and Computer Science}\\
\textnormal{\small Pozna\'{n} University of Technology} & \textnormal{\small Jagiellonian University}\\
\textnormal{\small Piotrowo 3A, 60-965 Pozna\'{n}, Poland} & \textnormal{\small \L ojasiewicza 6, 30-348 Krak\'ow, Poland}\\
\textnormal{\small email: maciej.ciesielski@put.poznan.pl;} & \textnormal{\small email: grzegorz.lewicki@im.uj.edu.pl}
\end{array}$


\begin{thebibliography}{99}

\bibitem{BS} C. Bennett and R. Sharpley, \textit{Interpolation of operators}, Pure and Applied Mathematics Series 129, Academic Press Inc.,1988.

\bibitem{Brown1} A. L. Brown, \textit{A rotund reflexive space having a subspace of codimension two with a discontinuous metric projection,} Michigan Math. J. (21), (1974), 145 - 151.

\bibitem{Brown2} A. L. Brown \textit{Set valued mappings, continuous selections and metric projections}, J. Approx. Theory, (57), (1989), 48 -68.

\bibitem{Brown3} A. L. Brown, F. Deutsch, V. Indumathi and P. S. Kenderov \textit{Lower semicontinuity concepts, continuous selections and set valued metric projections}, J. Approx. Theory, (115), (2002), 120 - 143.

\bibitem{Brown4} A. L. Brown \textit{Continuous selections for metric projections in spaces of continuous functions and a disjoint leaves condition}, J. Approx. Theory, (141), (2006), 29 -62.

\bibitem{Brown5} A. L. Brown \textit{On lower semi-continuous motric projections onto finite-dimensional subspaces of spaces of continuous functions}, J. Approx. Theory, (166), (2013), 85 -105.

\bibitem{Cal} A. P. Calder\'{o}n, \textit{Intermediate spaces and
interpolation, the complex method}, Studia Math. 24 (1964),
113-190.

\bibitem{CheHeHudz} S. Chen, X. He, H. Hudzik, \textit{Monotonicity and best approximation in Banach lattices}, Acta Math. Sinica 25 (5) (2009) 785-794.

\bibitem{Chen} E. W. Cheney, \textit{Introduction to Approximation Theory,} McGraw-Hill, New York, 1966.

\bibitem{ChDSS} V. I. Chilin, P. G. Dodds, A. A. Sedaev, and F. A. Sukochev, \textit{Characterizations of Kadec-Klee properties in symmetric spaces of measurable functions}, Trans. Amer. Math. Soc. 348 (12) (1996), 4895-4918.

\bibitem{CKKP} M. Ciesielski, A. Kami\'{n}ska, P. Kolwicz and R. P\l uciennik, \textit{Monotonicity and rotundity of Lorentz spaces $\Gamma_{p,w}$}, Nonlinear Anal. 75 (2012), 2713-2723.

\bibitem{CiesKamPluc} M. Ciesielski, A. Kami\'nska and R. P\l uciennik, \textit{G\^ateaux derivatives and their applications to approximation in Lorentz spaces $\Gamma_{p,w}$}, Math. Nachr. \textbf{282} (2009) no. 9, 1242–-1264.

\bibitem{CieKolPan} M. Ciesielski, P. Kolwicz and A. Panfil, \textit{Local monotonicity structure of symmetric spaces with applications}, J. Math. Anal. Appl. 409 (2014) 649-662.

\bibitem{CieKolPlu} M. Ciesielski, P. Kolwicz, R. P\l uciennik, \textit{Local approach to Kadec-Klee properties in symmetric function spaces}, J.Math. Anal. Appl. 426 (2015) 700-726.M. C

\bibitem{Deutsch} F. Deutsch, V. Indumathi and K. Schnatz, \textit{ Lower semicontinuity, almost lower semicontinuity and continuous selections for set-valued mappings,} Journ. Approx. Theory,
53, (1988), 266 - 294. 

\bibitem{KamLee} A. Kami\'nskaand H. J. Lee, \textit{$M$-ideal properties in Marcinkiewicz spaces}, Comment. Math., Special volume for $75$th birthday of Julian Musielak, (2004) 123-144.

\bibitem{HKL} A. Kami\'nska, H. J. Lee and G. Lewicki, \textit{Extreme and smooth points in Lorentz and Marcinkiewicz spaces with applications to contractive projections,} Rocky Mountain Journal of Mathematics, (39,5), (2009), 1533-1572.

\bibitem{hk1} H. Hudzik and A. Kami\'{n}ska, \textit{Monotonicity properties
of Lorentz spaces}, Proc. Amer. Math. Soc. \textbf{123.9}, (1995), 2715-2721.

\bibitem{Hu-Ku} {H. Hudzik and W. Kurc,} \textit{Monotonicity properties of Musielak-Orlicz spaces and dominated best approximant in Banach lattices,} J. Approx. Theory 95 (3) (1998), 353-368.

\bibitem{Kam_extrem} A. Kami\'nska, \textit{Extreme points in Orlicz-Lorentz spaces}, Arch. Math. (Basel) 55 (2) (1990) 173-180.

\bibitem{Kam} A. Kami\'{n}ska, \textit{Some remarks on Orlicz-Lorentz spaces}, Math. Nachr. 147 (1990), 29-38.

\bibitem{KMGam} A. Kami\'{n}ska and L. Maligranda, \textit{On Lorentz spaces 
}$\Gamma _{p,w}$, Israel J. Math. 140 (2004), 285-318.

\bibitem{KPS} S. G. Krein, Yu. I. Petunin and E. M. Semenov, \textit{Interpolation of linear operators}, Nauka, Moscow, 1978 (in Russian).

\bibitem{kurc} W. Kurc, \textit{Strictly and uniformly monotone
Musielak-Orlicz spaces and applications to best approximation}, J. Approx.
Theory \textbf{69.2} (1992), 173-187.

\bibitem{Wulbert} A. J. Lazar, D. E. Wulbert and P. D. Morris, \textit{Continuous selections for metric projections,} Journal of Funct. Anal. (3), (1969), 193 - 216.

\bibitem{LinTza} J. Lindenstrauss and L. Tzafriri, \textit{Classical Banach spaces. II. Function spaces}, Springer-Verlag, Berlin-New York, 1979.

\bibitem{Loren} G. G. Lorentz, \textit{On the theory of spaces }$\Lambda $, Pacific J. Math. 1 (1951), 411-429.

\bibitem{Sawy} E. Sawyer, \textit{Boundedness of classical operators on
classical Lorentz spaces}, Studia Math. 96 (2) (1990), 145-158.

\bibitem{Singer} I. Singer, \textit{Best Approximation in Normed Linear Spaces by Elements of Linear Subspaces,} Springer, Berlin, 1970.

\bibitem{Step} V. D. Stepanov, \textit{The weighted Hardy's inequality for
nonincreasing functions}, Tran. Amer. Math. Soc. 338 (1993),
173--186.

\bibitem{SudWoj} J. Sudolski and A. W\'ojcik, \textit{Some remarks on strong uniqueness of best approximation}, J. Approx. Theory, Appl 6, (1990), no. 2, 44 - 78.

\end{thebibliography}
\end{document}